\documentclass[12pt]{amsart}
\usepackage{note}
\usepackage[utf8]{inputenc}
\usepackage{here}
\usepackage{ytableau}
\usepackage{graphicx}
\usepackage{multicol}

\usepackage[margin=1in]{geometry}

\title{On $A$-parameters containing unitary lowest weight representations of $\mathrm{U}(p,q)$}
\author{Shuji Horinaga}
\email{syuuji.horinaga@ntt.com, shorinaga@gmail.com}

\begin{document}

\newlength\scratchlength
\newcommand\s[2]{
  \settoheight\scratchlength{\mathstrut}%
  \scratchlength=\number\numexpr\number#1-1\relax\scratchlength
  \lower.5\scratchlength\hbox{\scalebox{1}[#1]{$#2$}}%
}
\renewcommand{\labelenumi}{(\arabic{enumi})}
\begin{abstract}
    In this paper, we determine all the Arthur packets containing an irreducible unitary lowest weight representation $\pi$ of real unitary group $G = \mathop{\mathrm{U}}(p,q)$, including non-scalar cases.
    Our methods are the Barbasch-Vogan parametrization of representations of $G$ and Trapa's algorithm to calculate the cohomologically induced representations.
    In particular, we show that an Arthur packet has at most one irreducible unitary lowest weight representation of $G$.
    As a consequence, if an irreducible unitary lowest weight representation $\pi$ exists in the Arthur packet of $\psi$, we give an explicit formula of the lowest $K$-type of $\pi$.
\end{abstract}
\maketitle
\section{Introduction}
    A central problem in the theory of automorphic forms is the endoscopic classification of irreducible cuspidal automorphic representations and counting the multiplicities.
    Arthur's multiplicity formula, which gives the multiplicity formula in terms of irreducible cuspidal automorphic representations of general linear groups, has been proven in several cases \cite{Arthur_2013_book, Mok_2015, KMSW_endoscopic_classification_unitary, Chen_Zou_2024_AMF} and references therein.
    These prior works have relied on Arthur's unpublished works, but they have come to be proved (cf.~\cite{AGIKMS_LIR_2024, LLS_2024_Aubert_dual_lemma_Arthur}).
    
    In Arthur's multiplicity formula, the irreducible discrete automorphic representations are given by the restricted tensor products of representations in local Arthur packets (or $A$-packets) with characters of component groups satisfying the product formula.
    In the classical language, Arthur's multiplicity formula reveals the counting of dimensions or the existence of square-integrable holomorphic modular forms.
    To calculate these, we should consider the Arthur packet at archimedean places containing lowest weight representations corresponding to the weights of modular forms under consideration.
    M\oe glin-Renard \cite{MR_consequence} provides the explicit description of Arthur packets in terms of cohomological inductions $A_\frakq(\lambda)$ that will be written by $\scrA_{\underline{d}}(\psi)$, but it remains to consider what the representation $\scrA_{\underline{d}}(\psi)$ is.
    One option is to calculate the nonvanishing condition of $\scrA_{\underline{d}}(\psi)$ by Trapa's algorithm \cite{Trapa_2001}.
    More recently, the nonvanishing condition has been given by a closed form \cite[Theorem 2.8]{C-Huang_2024}.
    In the current work, we describe all the Arthur packets containing a given lowest weight representation of real unitary groups.
    This essentially reveals all the $\scrA_{\underline{d}}(\psi)$ in an Arthur packet, which is isomorphic to the given lowest weight representation.
    In the case of $\Sp_{2n}(\bbR)$, the similar but only for scalar lowest weight case is discussed in \cite{MR_ht_wt}.

    To state the main theorem, we recall lowest weight representations and M\oe glin-Renard's description of Arthur packets.
    Let $G = \U(p,q)$ with $N= p+q$ and $\frakg_\bbC$ be the complexified Lie algebra of $G$.
    Choose a Cartan subgroup $T$ as the group of diagonal matrices of $G$.
    We identify $\frakt_\bbC = \Lie(T) \otimes_\bbR \bbC$ as $\bbC^N$ by a basis $\{e_1,\ldots,e_N\}$ defined by $e_i (\diag(t_1,\ldots,t_N)) = t_i$.
    Then, the root system $\triangle$ is given by
    \[
    \triangle = \{\pm(e_i - e_j) \mid 1 \leq i < j \leq N\}.
    \]
    We choose the positive system $\triangle^+$ as
    \[
    \triangle^+ = \{e_i - e_j \mid 1\leq i < j \leq N\}.
    \]
    Let $\frakb$ be the Borel subalgebra of $\frakg_\bbC$ corresponding to $\triangle^+$ and $\frakb^-$ be the opposite of $\frakb$.
    With the same choice of positive roots, the highest weights of the maximal compact subgroup $K$ under the usual choice are $(\lambda_1,\ldots,\lambda_{p}, \lambda_{p+1}, \ldots,\lambda_N) \in \bbZ^N$ satisfying $\lambda_1 \geq \cdots \lambda_p, \lambda_{p+1}\geq \cdots \geq \lambda_{N}$.
    We denote by $\lambda$ the irreducible representation of $K$ with highest weight $\lambda$ for short.
    We say that a $(\frakg, K)$-module is a lowest weight representation if there exists $v \in \pi$ such that the vector $v$ generates $\pi$ and $v$ is annihilated by $\frakb^-$.
    The vector $v$ is called a lowest weight vector.
    For an irreducible lowest weight representation $\pi$, there exists a unique representation $\tau$ of $K$ called the lowest $K$-type such that a lowest weight vector generates $\tau$ as a representation of $K$.
    For each irreducible representation of $K$ with highest weight $\lambda = (\lambda_1,\ldots,\lambda_p, \lambda_{p+1},\ldots,\lambda_N) \in \bbZ^N$, there exists a unique irreducible lowest weight representation $\pi_\lambda$ with the lowest $K$-type $\lambda$.
    For an irreducible representation $\pi$, let $\chi_\pi$ be the infinitesimal character of $\pi$.
    
    For $A$-parameters, we only consider good $A$-parameters for short.
    The good $A$-parameters $\psi$ of $G$ can be viewed as a formal sum
    \[
    \psi =  \bigoplus_{i=1}^r \chi_{t_i} \otimes S_{a_i}
    \]
    such that $\sum_{i} a_i = N$, $S_m$ is the irreducible representation of $\SL_2(\bbC)$ with dimension $m$ and $t_i + a_i + N \in 2\bbZ$, where $\chi_{t}$ is the character of $\bbC^\times$ defined by $\chi_{t} (z) = z^{t/2}\overline{z}^{-t/2}$.
    Suppose $t_i \geq t_{i+1}$, and, $a_{i} \geq a_{i+1}$ if $t_i = t_{i+1}$.
    Put
    \[
    \calD(\psi) = \left\{(p_i, q_i) \in (\bbZ_{\geq 0} \times \bbZ_{\geq 0})^r \,\middle|\, p_i + q_i = a_i, \sum_{i=1}^r p_i = p, \sum_{i=1}^r q_i = q\right\}.
    \]
    For each $\underline{d} \in \calD(\psi)$, we will attach a cohomologically induced representation $\scrA_{\underline{d}}(\psi) = A_{\frakq(x_{\underline{d}})}(\lambda_{\underline{d}})$, see \S \ref{def_A_d_psi}.
    Then, the set $\Pi(\psi)$ defined by
    \[
    \Pi(\psi) = \{\scrA_{\underline{d}}(\psi) \mid \underline{d} \in \calD(\psi)\}
    \]
    is the Arthur packet or $A$-packet associated with $\psi$ by \cite[Th\'eor\`eme 1.1]{MR_consequence}.

    For a good $A$-parameter $\psi = \bigoplus_{i=1}^r \chi_{t_i}\otimes S_{a_i}$, let $j = j(\psi)$ be the minimal number $i$ so that $\sum_{\ell=1}^i a_\ell \geq p$.
    Put $a_{<i} = \sum_{\ell<i} a_\ell$ and $a_{>i} = \sum_{\ell>i} a_\ell$.
    Define $\underline{d_0} \in \calD(\psi)$ by
    \[
    \underline{d_0} = \underline{d_0}(\psi) = \{(a_1,0),\ldots,(a_{j-1},0), (p_j,q_j), (0,a_{j+1}),\ldots, (0,a_{r})\}
    \]
    where $p_j = p - a_{<j}, q_j = q - a_{>j}$.
    Let $\nu_i$ be the segment $[(t_i-a_i+1)/2,(t_i+a_i-1)/2]$ and 
    \[
    \nu_{<i} = \bigsqcup_{k < i} \nu_k, \qquad \nu_{>i} = \bigsqcup_{i < k} \nu_k.
    \]
    Here, we consider the union as multisets.
    The multisets $\nu_{\leq i}$ and $\nu_{\geq i}$ are defined similarly.
    The representations in $\Pi(\psi)$ have the infinitesimal character $\chi_\psi$ which has the Harish-Chandra parameter $\nu = \sqcup_i \nu_i$.

    For an irreducible representation $\lambda = (\lambda_1,\ldots,\lambda_N)$ of $K$, put
    \[
    p' = p'(\lambda) = \#\{i \mid \lambda_p = \lambda_i, 1 \leq i \leq p\}, \qquad q' = q'(\lambda) = \#\{i \mid \lambda_i = \lambda_{p+1}, p+1 \leq i \leq N\}.
    \]
    Set
    \[
    P = P(\lambda) = \{\lambda_{p}-(N-1)/2,\lambda_{p-1}-(N-1)/2+1\ldots,\lambda_1+(p-q-1)/2\}
    \]
    and
    \[
    Q = Q(\lambda) = \{\lambda_{N}+(p-q+1)/2, \lambda_{N-1}+(p-q+3)/2,\ldots, \lambda_{p+1}+(N-1)/2\}.
    \]
    The multiset $P \sqcup Q$ can be identified with the infinitesimal character of the lowest weight representation $\pi_\lambda$.
    We define the segments $P'$ and $Q'$ by
    \[
    P' = [\lambda_p-(N-1)/2, \lambda_p-(N+1)/2+p'], \qquad Q' = [\lambda_{p+1}+(N+1)/2-q',\lambda_{p+1}+(N-1)/2].
    \]
    Put $I = P' \cap Q'$.

    \begin{lem}[Lemma \ref{lemma}]
        Let $\lambda = (\lambda_1,\ldots,\lambda_{N})$ be an irreducible representation of $K$ and $\pi_\lambda$ be the irreducible lowest weight representation with lowest $K$-type $\lambda$.
        For an $A$-parameter $\psi$, if $\pi_\lambda \in \Pi(\psi)$, the parameter $\psi$ is good and $\chi_\psi = \chi_{\pi_\lambda}$, i.e., $\nu = P \sqcup Q$.
        Moreover, if $\scrA_{\underline{d}}(\psi) \isom \pi_\lambda$, we have $\underline{d} = \underline{d_0}$.
    \end{lem}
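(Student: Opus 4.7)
The first conclusion, that $\psi$ is good and $\chi_\psi = \chi_{\pi_\lambda}$, is nearly automatic. Goodness is built into the standing convention of the paper; the equality of infinitesimal characters follows because every element of $\Pi(\psi)$ has infinitesimal character $\chi_\psi$ with Harish-Chandra parameter $\nu$, combined with the identification $\chi_{\pi_\lambda} \leftrightarrow P \sqcup Q$ noted in the paragraph preceding the lemma. If a direct verification of this identification is wanted, it reduces to a short calculation: the lowest weight vector $v$ of $\pi_\lambda$ is a $K$-weight vector of weight $w_0^K \lambda$ (where $w_0^K$ is the longest element of the Weyl group of $K = \U(p) \times \U(q)$) that is annihilated by the nilradical of $\frakb^-$, so $\chi_{\pi_\lambda}$ has Harish-Chandra parameter $w_0^K \lambda - \rho$; coordinate-by-coordinate expansion identifies this tuple with $(P, Q)$.

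The substance of the lemma is the uniqueness of $\underline{d}$. My plan is to exploit the intrinsic characterization that an irreducible $(\frakg, K)$-module is of lowest weight type if and only if the highest weight vector of its lowest $K$-type is annihilated by the negative noncompact root spaces $\mathfrak{p}^-$. Writing $\scrA_{\underline{d}}(\psi) = A_{\frakq(x_{\underline{d}})}(\lambda_{\underline{d}})$ as a cohomological induction from a $\theta$-stable parabolic, this property forces $\frakq(x_{\underline{d}})$ to be ``holomorphic'', in the sense that its nilradical contains $\mathfrak{p}^-$. Unpacking the block-by-block construction of $x_{\underline{d}}$ from \S\ref{def_A_d_psi} (in which the blocks are indexed according to the weakly decreasing order of the $t_i$), the containment $\mathfrak{p}^- \subseteq \mathfrak{u}(x_{\underline{d}})$ translates into the combinatorial requirement that $q_i = 0$ for every $i < j(\psi)$ and $p_i = 0$ for every $i > j(\psi)$. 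Combined with the marginal constraints $\sum p_i = p$ and $\sum q_i = q$, this forces $\underline{d} = \underline{d_0}$.

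The main obstacle is the combinatorial step in the previous paragraph: verifying that $\mathfrak{p}^- \subseteq \mathfrak{u}(x_{\underline{d}})$ really does enforce the staircase shape of $\underline{d_0}$ and nothing else. This requires a careful inspection of the construction of $x_{\underline{d}}$ and of the root spaces that appear in its nilradical. If this direct approach becomes cumbersome, a parallel route is to compute the lowest $K$-type of $\scrA_{\underline{d}}(\psi)$ via Vogan's formula (or Trapa's algorithm, as referenced in the introduction), match it against the known lowest $K$-type $\lambda$ of $\pi_\lambda$, and extract $\underline{d} = \underline{d_0}$ from the resulting constraints on the pairs $(p_i, q_i)$. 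Existence of a $\underline{d}$ satisfying $\scrA_{\underline{d}}(\psi) \cong \pi_\lambda$ is not in question here; uniqueness is the point.
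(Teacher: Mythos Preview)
Your treatment of the first conclusion is essentially correct, though note that goodness of $\psi$ is a genuine conclusion here rather than a standing convention: Lemma~\ref{lemma_cond_d} starts from an arbitrary $\psi = \bigoplus_i \chi_{t_i,s_i} \otimes S_{a_i}$, and integrality of $\chi_{\pi_\lambda}$ is what forces $s_i = 0$.

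The substantive gap is in your argument for $\underline{d} = \underline{d_0}$. You assert that the lowest-weight property of $\scrA_{\underline{d}}(\psi)$ ``forces $\frakq(x_{\underline{d}})$ to be holomorphic, in the sense that its nilradical contains $\mathfrak{p}^-$.'' There are two problems. First, the paper's definition of a holomorphic $\theta$-stable parabolic is $\fraku \cap \frakp_\bbC \subset \frakp_+$, which is \emph{not} equivalent to $\frakp_- \subset \fraku$; for $\underline{d_0}$ with $p_j q_j \neq 0$ the Levi meets $\frakp_\bbC$ nontrivially, so neither $\frakp_+$ nor $\frakp_-$ lies entirely in $\fraku$. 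Second, and more seriously, the implication ``$A_\frakq(\lambda)$ is a lowest weight module $\Rightarrow$ $\frakq$ is holomorphic'' is exactly what must be proved, and you supply no mechanism for it. The lowest-weight property is intrinsic to the module, while $\frakq$ is auxiliary data; distinct parabolics can produce isomorphic $A_\frakq(\lambda)$ (cf.\ Corollaries~\ref{cor_transform} and~\ref{cor_transform_2}), so the parabolic cannot be read off from the module directly. Your fallback of computing the lowest $K$-type ``via Trapa's algorithm'' is also misdirected: that algorithm outputs the annihilator tableau and the signed tableau for $\mathrm{AS}$, not the lowest $K$-type.

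The paper's actual argument (Lemma~\ref{lemma_cond_d}) runs through the asymptotic support. By Corollary~\ref{signed_tableau_lt_wt_rep} the $(p,q)$-signed tableau of any unitary lowest weight representation has at most two columns, with every two-box row in the pattern $+,-$. One then checks from the inductive construction of the signed tableau of $\scrA_{\underline{d}}(\psi)$ in \S\ref{trapa_algorithm} that any $\underline{d}$ not of the shape $\underline{d_0}$ produces either a row of length at least three or a two-box row in the pattern $-,+$. This signed-tableau comparison is the missing step, and it genuinely uses the $\mathrm{AS}$ invariant rather than a direct inspection of root spaces in $\frakq$.
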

    
    As a consequence of this lemma, there exists at most one unitary lowest weight representation in $\Pi(\psi)$.
    We now state the main theorem of the present paper.
    
    \begin{thm}[Theorem \ref{main}]
        Let $\lambda = (\lambda_1,\ldots,\lambda_{N})$ be an irreducible representation of $K$ and $\pi_\lambda$ be the irreducible lowest weight representation with lowest $K$-type $\lambda$.
        Suppose that $\scrA_{\underline{d_0}}(\psi)$ is nonzero and $\chi_\psi = \chi_{\pi_\lambda}$.
            
        \begin{enumerate}
            \item If $N- p' \leq \lambda_{p}-\lambda_{p+1} < N-q'$, the packet $\Pi(\psi)$ contains $\pi_\lambda$ if and only if $[\lambda_p-(N-1)/2, \lambda_{p+1}+(N-1)/2] \subset \nu_j \subset P'$.
            \item If $N-q' \leq \lambda_{p}-\lambda_{p+1} < N-p'$, the packet $\Pi(\psi)$ contains $\pi_\lambda$ if and only if either
            \begin{itemize}
                \item $\nu_{\leq j} = P$, or
                \item $[\lambda_{p}-(N-1)/2, \lambda_{p+1}+(N-1)/2] \subset \nu_j \subset Q'$.
            \end{itemize}
            \item If $N- p', N-q' \leq \lambda_{p}-\lambda_{p+1}$, the packet $\Pi(\psi)$ contains $\pi_\lambda$ if and only if either
            \begin{itemize}
                \item $P \subset \nu_{\leq j} \subset P \sqcup I$, or
                \item $I \subset \nu_{j} \subset Q'$.
            \end{itemize}
            \item If $\lambda_{p}-\lambda_{p+1} < N- p', N-q'$, the packet $\Pi(\psi)$ contains $\pi_\lambda$ if and only if $[\lambda_p-(N-1)/2, \lambda_{p+1}+(N-1)/2] = \nu_j$.
        \end{enumerate}
    \end{thm}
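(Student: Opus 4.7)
The plan is to combine the Lemma with Trapa's algorithm for cohomologically induced modules. By the Lemma, $\pi_\lambda \in \Pi(\psi)$ forces $\scrA_{\underline{d_0}}(\psi) \cong \pi_\lambda$; since $\scrA_{\underline{d_0}}(\psi)$ is nonzero by hypothesis and is a lowest weight representation by the choice of $\underline{d_0}$, it has the form $\pi_\mu$ for a unique highest weight $\mu$. The theorem therefore reduces to characterizing when $\mu = \lambda$. Unpacking $\underline{d_0}$, the Levi of $\frakq(x_{\underline{d_0}})$ has a single noncompact factor $\U(p_j, q_j)$ corresponding to the middle segment $\nu_j$, while the segments $\nu_{<i}$ and $\nu_{>i}$ sit in compact factors $\U(a_i)$ and contribute only to the $\rho$-shifts of the inducing character.

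The first step is to apply Trapa's algorithm \cite{Trapa_2001} to $\scrA_{\underline{d_0}}(\psi)$ and read off $\mu$ coordinate-by-coordinate in terms of the segments $\nu_i$. Because the contributions from $\nu_{<j}$ and $\nu_{>j}$ only enter via $\rho$-shifts, $\mu$ depends solely on $\nu_j$ together with the initial data $\lambda_{\underline{d_0}}$. Trapa's algorithm produces $\mu$ through a sequence of combinatorial moves, and the coordinates of $\mu$ are determined by how $\nu_j$ overlaps with the plateau segments $P'$ and $Q'$ and with the central interval $[\lambda_p - (N-1)/2, \lambda_{p+1} + (N-1)/2]$.

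The heart of the proof is a case analysis on the relative sizes of $p'$, $q'$, and $\lambda_p - \lambda_{p+1}$, which govern the four geometric configurations of $P'$, $Q'$, and the central interval. In cases (1) and (4) the configuration is ``clean'' (one plateau entirely dominates the other, or the plateaus are disjoint from the central interval) and the condition $\mu = \lambda$ pins $\nu_j$ to a single type of position, yielding the stated inclusion or equality. In cases (2) and (3) there is a disjunction because two combinatorially distinct pathways through Trapa's algorithm both produce $\mu = \lambda$: either $\nu_j$ sits inside $Q'$ (respectively inside $P \sqcup I$), or the segments $\nu_{\leq j}$ exactly fill $P$ (respectively $P \sqcup I$). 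Sufficiency in each sub-case follows by running the algorithm forward and verifying that the terminal lowest $K$-type equals $\lambda$.

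The main obstacle will be the necessity in cases (2) and (3): showing that the two listed sub-conditions are exhaustive and that no intermediate position of $\nu_j$ yields $\mu = \lambda$. This requires tracking the Trapa moves on the $p$- and $q$-sides simultaneously and ruling out configurations in which the algorithm partially re-balances the $(p_j, q_j)$-splitting but terminates with a lowest $K$-type differing from $\lambda$ in the plateau region. A secondary bookkeeping issue is to confirm that the intermediate steps in Trapa's algorithm are genuinely nonzero rather than accidentally preserving the final module while annihilating intermediate ones, since the hypothesis only guarantees nonvanishing of $\scrA_{\underline{d_0}}(\psi)$ itself.
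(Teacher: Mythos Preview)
Your overall architecture is right: reduce to $\underline{d_0}$ via the Lemma, and then run a case analysis on the relative sizes of $p', q', \lambda_p-\lambda_{p+1}$. But there is a genuine gap in how you intend to execute the comparison.

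Trapa's algorithm does \emph{not} output the lowest $K$-type $\mu$. It outputs the $\nu$-antitableau encoding $\Ann(\scrA_{\underline{d_0}}(\psi))$ and the $(p,q)$-signed tableau encoding $\mathrm{AS}(\scrA_{\underline{d_0}}(\psi))$. There is no ``read off $\mu$ coordinate-by-coordinate'' step; the paper explicitly avoids computing $K$-types of $A_\frakq(\lambda)$ outside the easy case $p_iq_i=0$ (Lemma~\ref{q_j=0}) because the Blattner-type formula is intractable here. The actual mechanism is the Barbasch--Vogan parametrization: one computes $\Ann(\pi_\lambda)$ from the explicit realization of $\pi_\lambda$ as an $A_\frakq(\mu)$ (Lemma~\ref{A_q_lt_wt_rep}), computes $\Ann(\scrA_{\underline{d_0}}(\psi))$ via Trapa's algorithm (made explicit in Lemma~\ref{lemma_nonvanishing}), and then compares specific entries---typically the $(1,2)$-entry or the position of a distinguished value such as $\lambda_p-(N-1)/2$ in a column---to force $\alpha,\beta$ to their claimed values. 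Your proposal never invokes this injectivity, and ``running the algorithm forward and verifying that the terminal lowest $K$-type equals $\lambda$'' is not a step the algorithm supports.

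Two further points. First, the claim that $\nu_{<j}$ and $\nu_{>j}$ ``only enter via $\rho$-shifts'' is false at the level of tableaux: the entries of $\Ann(\scrA_{\underline{d_0}}(\psi))$ depend on how $\nu_{<j}$ and $\nu_{>j}$ interleave with $\nu_j$ (see the first/second column formulas in Lemma~\ref{lemma_nonvanishing}), and this is precisely what drives the disjunctions in cases (2) and (3). Second, you are missing a key preparatory step: before any tableau comparison, the paper uses the \emph{weakly fair} property of $\scrA_{\underline{d_0}}(\psi)$ to prove a priori constraints on $\nu_j$ (Lemma~\ref{basic_properties_of_d_0}), most importantly that $I \subset \nu_j$ whenever $q_j\neq 0$ and that $\nu_{<j}\cap\nu_{>j}\neq\emptyset$ forces $q_j=0$. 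Without these, the case analysis for necessity does not close.
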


    Conversely, we have the following:
    \begin{thm}[Corollary \ref{psi_to_lambda}]
        Let $\psi = \bigoplus_{i=1}^r \chi_{t_i} \otimes S_{a_i}$ be a good $A$-parameter.
        The representation $\Pi(\psi)$ contains a non-zero unitary lowest weight representation if and only if  both
        \begin{itemize}
            \item $\nu_{<j}$ and $\nu_{>j}$ are multiplicity free, and
            \item $\#(\nu_j \cap \nu_{>j}) \leq p_j$ and $\#(\nu_j \cap \nu_{<j}) \leq q_j$.
        \end{itemize}
        When $\Pi(\psi)$ contains a nonzero unitary lowest weight representation $\pi$ in $\Pi(\psi)$, the lowest $K$-type $\lambda$ of $\pi$ is given as follows:
        \begin{enumerate}
            \item When $q_j = 0$, the lowest $K$-type $\lambda$ of $\pi$ satisfies $P(\lambda) = \nu_{\leq j}$ and $Q(\lambda) = \nu_{>j}$.
            \item When $p_j = \#(\nu_j \cap \nu_{>j})$ and $q_j \neq 0$, the lowest $K$-type $\lambda$ of $\pi$ satisfies $P(\lambda) = \nu_{<j} \sqcup (\nu_{j} \cap \nu_{>j})$ and $Q(\lambda) = \nu_{\geq j} \setminus  (\nu_{j} \cap \nu_{>j})$.
            \item When $q_j = \#(\nu_j \cap \nu_{<j}) \neq 0$, the lowest $K$-type $\lambda$ of $\pi$ satisfies $P(\lambda) = \nu_{\leq j}\setminus (\nu_{<j} \cap \nu_j)$ and $Q(\lambda) = (\nu_j \cap \nu_{<j}) \sqcup \nu_{>j}$.
            \item When $p_j \neq \#(\nu_j \cap \nu_{>j})$ and $q_j \neq \#(\nu_j \cap \nu_{<j})$, put $\nu_{<j} \sqcup \nu_{>j} = \{\sigma_1,\ldots,\sigma_{N-\#(\nu_j)}\}$.
            Let $i_0$ be the minimal integer such that $1 \leq i_0 \leq N - \#(\nu_j)$ and $\#(\nu_j) - i_0 + 1 + \#\{x \in\nu_{<j} \sqcup \nu_{>j} \mid x > \nu_{j,i_0}\}  = p$.
            Then, the lowest $K$-type $\lambda = (\lambda_1,\ldots,\lambda_N)$ of $\pi$ is given by
            \begin{align*}
            \lambda_i
            =
            \begin{cases}
                \sigma_i -(p-q+1)/2+i & \text{if $i < p-\#(\nu_j)+i_0$.}\\
                \nu_{j, 1} + (N+1)/2 -\#(\nu_j) & \text{if $p-\#(\nu_j)+i_0 \leq i \leq p$.}\\
                \nu_{j,1} - (N-1)/2 &\text{if p+1 $\leq i \leq p+i_0-1$.}\\
                \sigma_{i-\#(\nu_j)}-(N+1)/2-p+i & \text{if $p + i_0 \leq i$.}
            \end{cases}
            \end{align*}
        \end{enumerate}
    \end{thm}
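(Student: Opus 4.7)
The corollary is dual to Theorem~\ref{main}: rather than starting from $\lambda$ and describing admissible $\psi$, it fixes $\psi$ and recovers $\lambda$. Uniqueness of a unitary lowest weight representation in $\Pi(\psi)$ is already supplied by the Lemma, so I would prove the iff and the explicit formula together by invoking Theorem~\ref{main} case by case. For the necessity direction, assume $\Pi(\psi)$ contains a nonzero unitary lowest weight representation $\pi_\lambda$. By the Lemma, $\scrA_{\underline{d_0}}(\psi) \cong \pi_\lambda$, and equality of infinitesimal characters gives $\nu = P(\lambda) \sqcup Q(\lambda)$ as multisets. Since $P$ and $Q$ are each sets of distinct half-integers, every element of $\nu$ has multiplicity at most two, and any repetition must come from one element of $P$ meeting one of $Q$. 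Because each $\nu_i$ is a single segment and $\nu_{<j}, \nu_j, \nu_{>j}$ are arranged left to right, a repetition inside $\nu_{<j}$ or $\nu_{>j}$ would force two copies within $P$ alone or within $Q$ alone, contradicting their set property; hence $\nu_{<j}$ and $\nu_{>j}$ are multiplicity-free. The inequalities $\#(\nu_j \cap \nu_{>j}) \leq p_j$ and $\#(\nu_j \cap \nu_{<j}) \leq q_j$ follow by counting: Theorem~\ref{main} forces $\nu_j \cap \nu_{>j}$ onto the $P$-side and $\nu_j \cap \nu_{<j}$ onto the $Q$-side, so they occupy at most $p_j$, respectively $q_j$, of the slots in $\underline{d_0}$.

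\emph{Sufficiency and the explicit $\lambda$.} Conversely, given $\psi$ satisfying the bullet conditions, I would split into the four cases and write down $\lambda$ via the formulas (1)--(4). For each candidate I would verify: (a) $P(\lambda) \sqcup Q(\lambda) = \nu$, matching infinitesimal characters; (b) $j(\psi)$ equals the index $j$ appearing in the formula, via $\#\nu_{<j} < p \leq \#\nu_{\leq j}$; (c) the values $(p'(\lambda), q'(\lambda))$ pick out the correct case of Theorem~\ref{main}; and (d) the containment on $\nu_j$ required in that case holds. Cases (1)--(3) are largely formal: $\nu_j$ lies on one side of the gap or is absorbed into the plateau at $\lambda_p$ or $\lambda_{p+1}$, and after reading off $P, Q, P', Q', I$ the required condition reduces to $\nu_{\leq j} = P$, $[\lambda_p - (N-1)/2, \lambda_{p+1} + (N-1)/2] \subset \nu_j \subset P'$, or its $Q'$-mirror.

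\emph{Case (4) and main obstacle.} The combinatorial core is Case (4), where $\nu_j$ straddles the gap between $\lambda_p$ and $\lambda_{p+1}$ and $i_0$ encodes how $\nu_j$ splits between the $Q$- and $P$-halves. I would first prove $i_0$ is well-defined by showing that $i \mapsto \#(\nu_j) - i + 1 + \#\{x \in \nu_{<j} \sqcup \nu_{>j} \mid x > \nu_{j,i}\}$ is weakly decreasing in $i$ and straddles $p$ under the bullet hypotheses on $p_j, q_j$. Next I would verify the weakly decreasing conditions $\lambda_1 \geq \cdots \geq \lambda_p$ and $\lambda_{p+1} \geq \cdots \geq \lambda_N$ across the four regimes of the piecewise definition, using multiplicity-freeness of $\nu_{<j} \sqcup \nu_{>j}$ outside $\nu_j$. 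Computing $p'(\lambda) = \#(\nu_j) - i_0 + 1$ and $q'(\lambda) = i_0 - 1$ yields $\lambda_p - \lambda_{p+1} = N - \#(\nu_j) + 1$, placing $\lambda$ into the correct case of Theorem~\ref{main} and confirming $I \subset \nu_j \subset Q'$ (or the $P'$-mirror), so that $\pi_\lambda \in \Pi(\psi)$ follows from that theorem. The main obstacle is the boundary analysis when $i_0 = 1$ or $i_0 - 1 = \#(\nu_j \cap \nu_{<j})$: at these extremes the candidate degenerates and one must check that it is subsumed by Cases (2) or (3) rather than representing a new situation, thereby confirming that the four cases of the corollary exhaust all possibilities.
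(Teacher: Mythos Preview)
Your overall strategy of reading the corollary as the inverse of Theorem~\ref{main} is sound, and the sufficiency side (write down a candidate $\lambda$, then feed it back into Theorem~\ref{main}) can be made to work. But two places in the plan do not go through as written.

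\textbf{Necessity of the bullet conditions.} Your argument that $\nu_{<j}$ and $\nu_{>j}$ are multiplicity free is not correct. You assert that ``$\nu_{<j}, \nu_j, \nu_{>j}$ are arranged left to right'' and hence a repeated value in $\nu_{<j}$ would lie entirely in $P$ (or in $Q$). Neither claim holds: the ordering of the $\nu_i$ is only by mean value (weakly fair range), so the segments may overlap heavily, and a value occurring twice in $\nu_{<j}$ will in fact occur once in $P$ and once in $Q$, which contradicts nothing. Likewise your ``counting'' for the two inequalities is only a slogan. The paper avoids this entirely: by Lemma~\ref{lemma} one is reduced to $\underline{d_0}$, and then the bullet conditions are \emph{precisely} the nonvanishing criterion for $\scrA_{\underline{d_0}}(\psi)$ established in Lemma~\ref{lemma_nonvanishing} via Trapa's algorithm. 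That lemma does the work you are trying to redo by hand; you should invoke it rather than attempt an infinitesimal-character argument.

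\textbf{Case (4).} You compute $\lambda_p-\lambda_{p+1}=N-\#(\nu_j)+1$; the correct value from the displayed formula is $N-\#(\nu_j)$. More importantly, you then claim this places $\lambda$ in the situation ``$I\subset\nu_j\subset Q'$'', which is case~(3) of Theorem~\ref{main}. In fact the case-(4) formula gives $\lambda_p-(N-1)/2=\nu_{j,\#(\nu_j)}$ and $\lambda_{p+1}+(N-1)/2=\nu_{j,1}$, i.e.\ $\nu_j=[\lambda_p-(N-1)/2,\lambda_{p+1}+(N-1)/2]$ on the nose, and with $p'(\lambda)\ge\#(\nu_j)-i_0+1$, $q'(\lambda)\ge i_0-1$ one lands in case~(4) of Theorem~\ref{main} (for $1<i_0\le\#(\nu_j)$). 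The paper's own proof is more direct still: rather than guessing $\lambda$ and verifying, it reads off the lowest $K$-type from the explicit formulas in Lemma~\ref{q_j=0} (for $q_j=0$) and Lemma~\ref{A_q_lt_wt_rep} together with Corollaries~\ref{cor_transform}--\ref{cor_transform_2}, using Theorem~\ref{main} only to pin down which regime of $\lambda_p-\lambda_{p+1}$ one is in.
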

    
    To conclude the introduction, we give some remarks and possible applications of the present paper.
    In the proof of the main theorem, we do not calculate the $K$-types of $A_{\frakq}(\lambda)$ except for special cases.
    Our proof is based on the Barbasch-Vogan parametrization of representations of $G$.
    This parametrization says that for any irreducible representation $\pi$, the map $\pi \mapsto (\Ann(\pi), \mathrm{AS}(\pi))$ is injective, where $\Ann(\pi)$ is the annihilator and $\mathrm{AS}(\pi)$ is the asymptotic support.
    The invariants $\Ann(\pi)$ and $\mathrm{AS}(\pi)$ can be described as certain tableaux in our case.
    Trapa \cite{Trapa_2001} gave an algorithm to compute such invariants for cohomological inductions $A_\frakq(\lambda)$.
    We calculate the tableaux and investigate the conditions where the cohomological inductions $A_{\frakq}(\lambda)$ are isomorphic to a given $\pi_\lambda$.
    When the group $G$ is not unitary, the problem becomes complicated, and there are at least two difficulties that do not occur in the unitary case: one, the description of unipotent $A$-parameters and two, the reducibility of $A_\frakq(\lambda)$ in the weakly fair range.
    
    Finally, the results of this paper have been applied to the geometry of Shimura varieties of $\mathrm{U}(1, n)$ in \cite{HMY_2024} by the author and the collaborators. 
    Beyond this specific application, we anticipate that the result will be utilized in various fields, leading to numerous further developments.

\subsection*{Acknowledgement}
    We wish to thank Takuya Yamauchi for invaluable comments and encouragement.
    We would also like to thank Du Chengyu for sharing ideas on the proof of the main theorem and clarifying our questions about the properties of $A_\frakq(\lambda)$.
    The author is partially supported by JSPS KAKENHI Grant Number 23K12965.

\section{Unitary groups and representations}
    In this section, we review definitions of unitary groups and representations.
    We also recall the Barbasch-Vogan parametrization of the representations, which is a key tool in this study.

    \subsection{Tableau notation}
    For a segment, we mean a finite set of the form $\{a, a+1, \ldots, a+n\}$, say $[a, a+n]$, for a real number $a$ and $n \in \bbZ_{\geq 0}$.
    In this paper, the segments are always regarded as multisets, that is, sets with multiplicities.
    For segments $\nu_{1} = [a,b]$ and $\nu_2 = [c,d]$, we say that $\nu_1$ and $\nu_2$ are linked (resp.~$\nu_1 \leq \nu_2$) if either $c=b+1$ or $a = d+1$ (resp.~$a \leq c$ and $b \leq d$) holds.

    For a partition $n = n_1 + \cdots + n_\ell$ with $n_1 \geq \cdots \geq n_\ell$, we have a diagram with $n_i$ boxes in the $i$-th row.
    This diagram is called a Young diagram of size $n = n_1 + \cdots + n_\ell$.
    If $\nu = (\nu_1,\ldots,\nu_n)$ is an $n$-tuple of real numbers, a $\nu$-quasitableau is defined as a tableau such that the shape is the Young diagram of size $n$ and the entries are an arrangement of $\nu_1,\ldots,\nu_n$.
    For a $\nu$-quasitableau $T$, we say that $T$ is $\nu$-antitableau if entries strictly decrease down each column and weakly decrease along each row.
    This definition is the same as the definition of the semistandard tableau by replacing ``decreasing" with ``increasing."

    We define a $(p,q)$-signed tableau as an equivalence class of Young diagrams whose boxes are $p$ plus boxes and $q$ minus boxes so that the signs alternate across the row.
    Here, we say that two signed tableaux are equivalent if the signatures are the same and coincide by interchanging rows of the same length.
    For a tableau $T$ with entries, we say that the $(a,b)$-th entry of $T$ is the entry in the $a$-th row and the $b$-th column.
    
    The definitions above are those in the previous works \cite{C-Huang_2024, Chengyu_2024, Trapa_2001} on the nonvanishing of cohomological inductions.
    See these references for examples of tableaux.
    
\subsection{Unitary groups} 
    For a Lie group $H$, we denote by $\frakh$ (resp.~$\frakh_\bbC$) the Lie algebra of $H$ (resp.~complexification of $\frakh$) and by $\calU(\frakh_\bbC)$ the universal enveloping algebra of $\frakh_\bbC$.
    Fix a positive integer $N$ with a partition $N = p+q$ and $p,q \geq 0$.
    We define the unitary group $G = \U(p,q)$ by
    \[
    G = \U(p,q) = 
    \left\{
    g \in \GL_{N}(\bbC) 
    \,\middle|\,
    {^t\overline{g}}
    I_{p,q}
    g
    = 
        I_{p,q}
    \right\},\qquad 
    I_{p,q} = \begin{pmatrix}
        \mathbf{1}_p&\\
        &-\mathbf{1}_q
    \end{pmatrix}.
    \]
    Here, $\overline{g}$ is the complex conjugate of $g$.
    For a Cartan involution $\theta \colon g \mapsto \Ad(I_{p,q})({^t\overline{g}^{-1}})$, let $K$ be the group of the fixed points of $\theta$.
    Then, $K$ is a maximal compact subgroup of $G$, which is isomorphic to $\U(p) \times \U(q)$.
    The Cartan involution $\theta$ induces an involution $\theta$ on $\frakg$ and a decomposition
    \[
    \frakg = \frakk \oplus \frakp, \qquad 
    \frakg_\bbC = \frakk_\bbC \oplus \frakp_+ \oplus \frakp_-.
    \]
    Here, $\frakp$ is the $(-1)$-eigenspace of $\theta$ on $\frakg$, and $\frakp_+$ (resp.~$\frakp_-$) corresponds to the holomorphic (resp.~anti-holomorphic) tangent space of a hermitian symmetric space $G/K$.
    Let $T$ be the diagonal subgroup of $G$.
    Then, $T$ is a Cartan subgroup of $G$ and is stable under the Cartan involution $\theta$.
    Define $e_i \in \frakt_\bbC^*$ by $e_i(\diag(t_1,\ldots,t_N)) = t_i$.
    We regard $\frakt_\bbC^*$ as $\bbC^N$ by the basis $\{e_1,\ldots,e_N\}$.
    Then, the root system $\triangle$ of $\frakg_\bbC$ with respect to $\frakt_\bbC$ is equal to
    \[
    \triangle = \{\pm(e_i - e_j) \mid 1 \leq i < j < N\}.
    \]
    We choose a positive system $\triangle^+$ as
    \[
    \triangle^+ = \{e_i - e_j \mid 1 \leq i < j \leq N\}.
    \]
    Let $\triangle_c$ (resp.~$\triangle_n$) be the compact (resp.~noncompact) root system with the positive system $\triangle^+_c = \triangle_c \cap \triangle^+$ (resp.~$\triangle^+_n = \triangle_n \cap \triangle^+$), i.e.,
    \[
    \triangle_c = \{\pm(e_i-e_j) \mid \text{$1 \leq i < j \leq p$ or $p+1 \leq i < j \leq N$}\}
    \]
    and 
    \[
    \triangle_n = \{\pm(e_i - e_j) \mid 1 \leq i \leq p <  j \leq N\}.
    \]
    Then, the root system of $\frakp_+$ associated with $\frakt_\bbC$ is $\triangle_n^+$.
    Let $\frakb$ be the Borel subalgebra of $\frakg_\bbC$ associated with $\triangle^+$ and $\frakb^-$ be the opposite of $\frakb$.
    Let $\fraku$ be a Lie subalgebra of $\frakg_\bbC$ that is stable under the adjoint action of $\frakt_\bbC$.
    Then, let $\rho(\fraku)$ be half the sum of roots in $\fraku$ associated with $\frakt_\bbC$.
    Put $\rho = \rho(\frakb)$.

    For $\lambda = (\lambda_1,\ldots,\lambda_N) \in \bbZ^N$ with $\lambda_1 \geq \cdots \geq \lambda_p, \lambda_{p+1} \geq \cdots \geq \lambda_{N}$, let $F(\lambda)$ be an irreducible representation of $K$ with the highest weight $\lambda$.
    We often write $F(\lambda)$ as $\lambda$, for short.
    The restriction of an admissible $(\frakg, K)$-module $\pi$ to $K$ can be decomposed as a direct sum 
    \[
    \pi|_K = \bigoplus_{\lambda} F(\lambda)^{\oplus m_\pi(\lambda)}, \qquad  m_\pi(\lambda) \in \bbZ_{\geq 0}
    \]
    where $\lambda$ runs over all $\triangle^+_c$-dominant integral weights.
    The nonnegative integer $m_\pi(\lambda)$ is the multiplicity of $F(\lambda)$ in $\pi$.
    For a $K$-type of $\pi$, we mean $F(\lambda)$ with $m_\pi(\lambda) \neq 0$.

    For an infinitesimal character $\chi_\pi$ of an irreducible representation $\pi$, we mean a character of the center $\calZ(\frakg_\bbC)$ of $\calU(\frakg_\bbC)$ obtained by the restriction of $\pi$ to $\calZ(\frakg_\bbC)$.
    By the Harish-Chandra isomorphism, infinitesimal characters are parametrized by $\frakt^*_\bbC/W$, where $W = W(G; T)$ is the Weyl group of $G$ for $T$.
    For $\lambda \in \frakt^*_\bbC/W$, let $\chi_\lambda$ (or $\lambda$ for short) be the corresponding character of $\calZ(\frakg_\bbC)$.
    We say that an infinitesimal character $\nu$ is integral if $\nu$ is in the image of $\bbZ^N + \rho$.
    In this paper, we regard integral infinitesimal characters as multisets with $N$-elements or an element $(x_1,\ldots,x_N)$ in $\bbZ^N + (N-1)/2$ with $x_1 \geq \cdots \geq x_N$.

    \subsection{Unitary lowest weight representations}
    For a $(\frakg, K)$-module $\pi$, we say that $\pi$ is lowest weight if there exists $v \in \pi$ such that $v$ generates $\pi$ and $v$ is annihilated by $\frakb^-$.
    Let $\lambda = (\lambda_1,\ldots,\lambda_N) \in \bbZ^N \subset \frakt_\bbC^*$ be a $\triangle_c^+$-dominant integral weight.
    We regard the irreducible representation $F(\lambda)$ of $K$ as an irreducible $\frakp_- \oplus \frakk_\bbC$ module by letting $\frakp_-$ act as trivial.
    Set
    \[
    N(\lambda) = \calU(\frakg_\bbC) \otimes_{\calU(\frakp_- \oplus \frakk_\bbC)}  F(\lambda).
    \]
    The module $N(\lambda)$ is called the parabolic Verma module and has the unique irreducible quotient $L(\lambda)$ (cf.~\cite[\S 9.4]{cat_o}).
    The module $L(\lambda)$ is unitarizable if $\lambda_p - \lambda_{p+1} \geq N -p' - q'$ where $p' = p'(\lambda) = \#\{i \mid \lambda_i = \lambda_p, i \leq p\}$ and $q' = q'(\lambda) = \#\{i \mid \lambda_i = \lambda_{p+1}, p+1 \leq i \leq N\}$ by \cite[Theorem 2.4]{EHW_83}.
    Note that $L(\lambda)$ is a discrete series representation if $\lambda_{p} - \lambda_{p+1} > N-1$ and is limits of discrete series if $\lambda_p - \lambda_{p+1} = N-1$.
    The infinitesimal character of $\pi_\lambda$ equals
    \[
    \left(\lambda_1+\frac{p-q-1}{2}, \ldots, \lambda_{p}-\frac{N-1}{2}, \lambda_{p+1}+\frac{N-1}{2}, \ldots, \lambda_N + \frac{p-q+1}{2}\right).
    \]
    Note that the infinitesimal character $\chi_{\pi_\lambda}$ is integral by $\lambda \in \bbZ^N$.

\subsection{Barbasch-Vogan parametrization of representations of \texorpdfstring{{$G$}}{}}\label{BV-param}

    For the details of this subsection, we refer to \cite[\S 4,5]{Trapa_2001} and \cite{Barbasch-Vogan_1983_weyl}.
    We introduce two invariants $\Ann(\pi)$ and $\mathrm{AS}(\pi)$ from $(\frakg, K)$-modules $\pi$, which are complete invariants of $\pi$. 
    For a $(\frakg, K)$-module $\pi$, let $\Ann(\pi)$ be the annihilator of $\pi$ in $\calU(\frakg_\bbC)$.
    When $\pi$ is irreducible, the ideal $\Ann(\pi)$ is called a primitive ideal.
    Primitive ideals for an irreducible representation with infinitesimal character $\nu$ are parametrized by $\nu$-antitableau, where $\nu$ is the infinitesimal character of $\pi$.
    From the local behavior of the character of $\pi$, we may define the asymptotic support $\mathrm{AS}(\pi)$ of $\pi$.
    Note that asymptotic supports are the union of nilpotent orbits.
    
    \begin{thm}[{\cite[Theorem 4.2]{Barbasch-Vogan_1983_weyl}, \cite[Theorem 6.1]{Trapa_2001}}]
        For irreducible $(\frakg, K)$-modules $\pi$ and $\pi'$ with integral infinitesimal characters, the representation $\pi$ is isomorphic to $\pi'$ if and only if $(\Ann(\pi), \mathrm{AS}(\pi)) = (\Ann(\pi'), \mathrm{AS}(\pi'))$.
    \end{thm}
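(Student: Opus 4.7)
The plan is to establish the completeness of the pair $(\Ann(\pi), \mathrm{AS}(\pi))$ by reducing to a fixed integral infinitesimal character $\nu$ (since $\nu$ is recovered from the central character, which is itself determined by $\Ann(\pi)$) and then decomposing the classification of irreducible $(\frakg, K)$-modules at this central character into two layers: first the primitive ideal, then the K-equivariant geometric data sitting over it.

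For the first layer I would invoke Duflo's theorem: at integral infinitesimal character $\nu$, every primitive ideal of $\calU(\frakg_\bbC)$ is of the form $\Ann(L(w \cdot \nu))$ for some $w \in W$, and via the Robinson-Schensted correspondence these ideals are indexed by $\nu$-antitableaux. Thus $\Ann(\pi)$ is encoded by a single antitableau $T_1(\pi)$, whose shape recovers the complex nilpotent orbit $\mathcal{O}$ associated to the primitive ideal. For the second layer, once a primitive ideal $I$ with associated orbit $\mathcal{O}$ is fixed, the irreducible $(\frakg, K)$-modules annihilated by $I$ form a finite set parametrized, after Vogan, by the K-orbits on $\mathcal{O}$; for $G = \U(p,q)$ such K-orbits are exactly the $(p,q)$-signed tableaux of the shape of $\mathcal{O}$. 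The Barbasch-Vogan analysis of the leading term of the character identifies a distinguished top-dimensional K-orbit inside $\mathrm{AS}(\pi)$, producing a signed tableau $T_2(\pi)$. The map $\pi \mapsto (T_1(\pi), T_2(\pi))$ is therefore injective: $T_1(\pi)$ pins down the primitive ideal, and $T_2(\pi)$ pins down the representation inside the fiber over it.

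The hard step is the second layer. The first is essentially a combinatorial consequence of Kazhdan-Lusztig-Vogan theory, but verifying that distinct K-orbits on $\mathcal{O}$ really correspond to non-isomorphic modules sharing the same annihilator requires a multiplicity-one statement for the leading term of the asymptotic cycle. Trapa's approach handles this by constructing explicit representatives via cohomological induction $A_\frakq(\lambda)$ and tracking their associated cycles combinatorially, showing that each K-orbit contributes exactly one irreducible module. The main technical obstacle is therefore promoting the K-orbit parametrization from a surjection to a bijection; one needs either Vogan's lowest-K-type machinery for cells together with linear independence of leading character terms on different K-orbits, or a direct inductive sweep through the Vogan block at infinitesimal character $\nu$ matching representations against the two tableaux on the nose.
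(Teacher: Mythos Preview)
The paper does not supply its own proof of this theorem; it is quoted as a background result with citations to Barbasch--Vogan and Trapa, and is used as a black box thereafter. There is nothing in the paper to compare your sketch against.

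As an outline of what happens in the cited references, your sketch is broadly on target: primitive ideals at integral infinitesimal character are tagged by $\nu$-antitableaux via Duflo plus Robinson--Schensted, the asymptotic support singles out a nilpotent $K_\bbC$-orbit encoded by a signed tableau, and the pair separates irreducibles. One caution: your statement that the irreducibles annihilated by a fixed primitive ideal are \emph{a priori} parametrized by $K$-orbits on the associated complex orbit is closer to the conclusion than to an input; in Trapa's argument the injectivity is obtained by running through the explicit combinatorics of the $\U(p,q)$ block (signed tableaux attached to $K_\bbC$-orbits on the flag variety) and checking directly that the two tableaux recover the Langlands data. Your identification of the ``hard step'' is correct, but its resolution in the references is not an abstract multiplicity-one statement for asymptotic cycles; it is the type-$A$ combinatorics that makes the map visibly injective.
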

    
    For cohomological inductions, which we will treat in this paper, the asymptotic support $\mathrm{AS}(\pi)$ is a single unipotent orbit by \cite[Proposition 5.4]{Trapa_2001}.
    Then, the asymptotic support $\mathrm{AS}(\pi)$ can be parametrized by $(p,q)$-signed tableaux (cf.~\cite[Theorem 9.3.3]{Collingwood-McGovern_1993}).
    Hence, representations treated in this paper will be parametrized by a $\nu$-antitableau and a $(p, q)$-signed tableau.

    Trapa and Vogan conjectured (\cite[Conjecture 1.1]{Trapa_2001}) that the cohomologically induced representations in the weakly fair range exhaust the unitary $(\frakg, K)$-modules with integral infinitesimal characters.
    As far as we know, the conjecture has been proven only for specific cases such as $\U(n,1)$ and $\U(n,2)$ (see \cite{Wong-Zhang_2024_unitary_dual_u(n_2)} for details).

\section{Cohomologically induced representations}
    In this section, we recall the cohomologically induced representations $A_{\frakq}(\lambda)$ and their basic properties.
    We also recall Trapa's algorithm to determine the tableaux for $A_\frakq(\lambda)$ and give a nonvanishing criterion for certain $A_{\frakq}(\lambda)$.
    
\subsection{\texorpdfstring{$\theta$}{}-stable parabolic subalgebras and cohomologically induced representation}

    Take $x \in \sqrt{-1}\,\frakk$.
    Since the action $\mathrm{ad}(x)$ on $\frakg_\bbC$ is diagonalizable with real eigenvalues, we may define the subalgebras of $\frakg_\bbC$ by
\begin{align*}
    \frakq=\frakq(x) 
    &= \text{sum of root vectors with nonnegative eigenvalues}.\\
    \fraku =\fraku(x)
    &= \text{sum of root vectors with positive eigenvalues}.\\
    \frakl=\frakl(x) 
    &= \text{sum of root vectors with zero eigenvalues}.
\end{align*}
    We call parabolic subalgebras $\frakq$ of $\frakg_\bbC$ obtained in the above way $\theta$-stable parabolic subalgebras of $\frakg_\bbC$.
    We may assume $x \in \sqrt{-1}\,\frakt$ by a conjugate of an element in $K$.
    For $\lambda \in \frakt_\bbC^*$, let $\bbC_\lambda$ denote the character of $\frakl$ with $\bbC_\lambda |_{\frakt} = \lambda$, if it exists.
    We then obtain the cohomologically induced representation $A_\frakq(\lambda)$ by the induction of $\bbC_\lambda$ as in \cite[(5.6)]{Knapp-Vogan}.

    In our case, the $\theta$-stable parabolic subalgebra $\frakq(x)$ will be described as follows:
    Let $\underline{d} = \{(p_i, q_i)_{1 \leq i \leq r}\}$ and $x_{\underline{d}}$ be an element in $\sqrt{-1}\,\frakt \isom \bbR^N$ of the form
    \[
    x_{\underline{d}} = (\underbrace{x_1, \ldots,x_1}_{p_1}, \ldots,\underbrace{x_r,\ldots,x_r}_{p_r},\underbrace{x_1, \ldots,x_1}_{q_1}, \ldots,\underbrace{x_r,\ldots,x_r}_{q_r}), \qquad x_1 > \cdots > x_r
    \]
    such that $(p_i,q_i) \in (\bbZ_{\geq 0})^2$ with $p_iq_i \neq 0$ for any $i$ and $p_1 + \cdots + p_r = p, q_1 + \cdots + q_r = q$.
    Set $\frakq_{\underline{d}} = \frakq(x_{\underline{d}})$.
    The centralizer $C_G(\frakq_{\underline{d}})$ is a connected reductive group $L_{\underline{d}}$ that is isomorphic to 
    \[
    L_{\underline{d}} \isom \mathrm{U}(p_1,q_1) \times \cdots \times \mathrm{U}(p_r,q_r)
    \]
    such that $\frakl_{\underline{d}} = \frakl(x_{\underline{d}})_\bbC = \Lie(L_{\underline{d}}) \otimes \bbC$.
    Note that our choice of $\frakq_{\underline{d}}$ or the hermitian form $I_{p,q}$ is different from that of \cite{MR_consequence, Trapa_2001, Vogan_1997_cohomology_and_group_representations}, but the same as \cite{Ichino_2022}.
   We say that a $\theta$-stable parabolic subalgebra $\frakq$ is holomorphic if there exists $j$ such that $q_i = 0$ for any $i < j$ and $p_\ell = 0$ for any $\ell > j$, i.e., $\fraku \cap \frakp_\bbC \subset \frakp_+$.
    The choices for a holomorphic $\theta$-stable parabolic subalgebra are the same as \cite{Trapa_2001, Vogan_1997_cohomology_and_group_representations}.

\subsection{Properties of \texorpdfstring{$A_\frakq(\lambda)$}{}}

    In this subsection, we review the basic properties for $A_\frakq(\lambda)$.
        Let $\underline{d} = \{(p_i, q_i)_{1 \leq i \leq r}\}$ be a pair of nonnegative integers with $(p_i, q_i) \neq 0, \sum_i p_i = p$, and $\sum_i q_i = q$.
        Take $\lambda \in \bbZ^N$ such that
        \[
        \lambda = (\underbrace{\lambda_1,\ldots,\lambda_1}_{a_1},\ldots,\underbrace{\lambda_r,\ldots,\lambda_r}_{a_r}), \qquad a_i = p_i+q_i.
        \]
        We define the segments $\nu_i$ associated with $\underline{d}$ and $\lambda$ by
        \[
        \nu_i = \nu_i(\underline{d}, \lambda) = [\lambda_{i}+(N+1)/2-a_{\leq i}, \lambda_{i}+(N-1)/2-a_{<i}].
        \]
        Here, $a_{<i} = \sum_{\ell < i} a_\ell$ and $a_{\leq i} = \sum_{\ell \leq i} a_\ell$.
        We sometimes denote the cohomological induction $A_{\frakq_{\underline{d}}}(\lambda)$ by $A(\frakq_{\underline{d}}, \nu_1,\ldots,\nu_r) = A(\underline{d}, \nu_1,\ldots,\nu_r)$ for short.
        We say that the cohomological induction $A_{\frakq_{\underline{d}}}(\lambda)$ is in the weakly fair range (resp.~mediocre range) if $\lambda_{i} - \lambda_{i+1} \geq -(a_i+a_{i+1})$ for any $i$ (resp.~$\lambda_{i}-\lambda_{j}\geq -\max\{n_i,n_j\} - \sum_{i < k < j} a_k$ for any $i < j$).
        We also say that $\nu = \lambda+\rho = \sqcup_i \nu_i$ is in the weakly fair range (resp.~mediocre range) if $A_{\frakq_{\underline{d}}}(\lambda)$ is so.

    Note that the cohomological induction $A_{\frakq_{\underline{d}}}(\lambda)$ is in the weakly fair range (resp.~mediocre range) if and only if the mean value in $\nu_i$ is greater than or equal to the mean value in $\nu_{i+1}$ (resp.~$\nu_i \not\geq \nu_j$ for any $i < j$) by the explicit calculation (cf.~\cite[Lemma 2.4]{Chengyu_2024}).

    The following statements are well-known (for example, see \cite{Knapp-Vogan}, \cite[Lemma 4.2]{1987_Adams}, \cite[Theorem 6.4.4]{Huang-Pandzic_2007_dirac_op_rep_theory}, and \cite[Theorem 3.1 (iv)]{Trapa_2001}):
    \begin{itemize}
        \item In the weakly fair range, $A_\frakq(\lambda)$ is unitarizable.
        Moreover, since $G$ is a real unitary group, it is zero or irreducible.
        \item For $A_\frakq(\lambda)$ in the mediocre range, the formula
        \[
        \dim \Hom_K(F(\mu), A_{\frakq}(\lambda)) = \sum_{w \in W^1} \sgn(w) P_{\fraku \cap \frakp_\bbC} (w(\mu+\rho_c) - (\lambda+2\rho(\fraku\cap \frakp) + \rho_c))
        \]
        holds.
        Here, $W^1$ is the subgroup of the Weyl group of $K$ with respect to $T$ consisting of $w$ for which $\alpha \in \triangle^+_c, w^{-1}\alpha < 0$ implies $\alpha \in \triangle(\fraku \cap \frakp_\bbC)$, and $P_{\fraku \cap \frakp_\bbC}$ is the partition function with respect to $\fraku \cap \frakp_\bbC$, i.e., $P_{\fraku \cap \frakp_\bbC}(x)$ the multiplicity of weight $x$ in the symmetric algebra $S(\fraku \cap \frakp_\bbC)^{\frakl \cap \frakk \cap \frakn}$.
        Moreover, if $\lambda+2\rho(\fraku\cap \frakp_\bbC)$ is $\triangle_c^+$-dominant, $\lambda + 2\rho(\fraku \cap \frakp_\bbC)$ occurs in $A_\frakq(\lambda)$ and  $K$-types in $A_\frakq(\lambda)$ are of the form
        \begin{align}\label{K-type_formula}
        \lambda+2\rho(\fraku\cap \frakp_\bbC) + \sum_{\alpha \in \triangle(\fraku \cap \frakp_\bbC)} n_\alpha \alpha, \qquad n_\alpha \geq 0.
        \end{align}
    \end{itemize}

    Calculating the $K$-type formula directly seems complicated, even in the case where $\frakq$ is a holomorphic $\theta$-stable parabolic subalgebra.
    To avoid such complexity, we will instead calculate the associated tableaux described below.

\subsection{Tableaux associated with \texorpdfstring{$A_\frakq(\lambda)$}{}}\label{trapa_algorithm}

    Let $\pi$ be a cohomological induction in the weakly fair range.
    We associated two invariants $\Ann(\pi)$ and $\mathrm{AS}(\pi)$ to $(\frakg, K)$-modules in section \ref{BV-param}.
    The annihilator $\Ann(\pi)$ can be regarded as a $\nu$-antitableau, but $\mathrm{AS}(\pi)$ is a union of unipotent orbits.
    If $\pi$ is isomorphic to a cohomological induction $A_\frakq(\lambda)$ in the weakly fair range, the asymptotic support $\mathrm{AS}(\pi)$ is a single unipotent orbit by \cite[Proposition 5.4]{Trapa_2001}.
    Indeed, for $A_\frakq(\lambda)$ in the good range, its asymptotic support is a single unipotent orbit.
    Since we can obtain $\pi$ as a translation of cohomological inductions in the good range, one can show that the asymptotic supports coincide.
    Hence, $\mathrm{AS}(A_{\frakq}(\lambda))$ is a single unipotent orbit.
    Thus, we may associate a $\nu$-antitableau and a $(p,q)$-signed tableau for a cohomological induction in the weakly fair range.
    The examples of tableaux associated with $A_\frakq(\lambda)$ are available in \cite{C-Huang_2024, Chengyu_2024, Trapa_2001}.

    In the following, we associate tableaux for $A_\frakq(\lambda)$.
    Let $\frakq$ be the $\theta$-stable parabolic subalgebra corresponding to $\{(p_i, q_i)_{1 \leq i \leq r}\}$ and $\lambda$ be a $\triangle_c^+$-dominant integral weight such that $A_\frakq(\lambda)$ is in the mediocre range.
    We first construct the $(p,q)$-signed tableau inductively.
    Let $S_1$ be the Young tableau of size $1+ \cdots +1$ with $p_1+q_1$ boxes filled with $p_1$ plus boxes and $q_1$ minus boxes.
    Suppose that the $(\sum_{i<k}p_i, \sum_{i<k}q_i)$-signed tableau $\sqcup_{i<k} S_k$ for $k \leq r$ is defined.
    We then construct the signed tableau $\sqcup_{i \leq k} S_k$ by adding $p_k$ boxes filled with $+$ and $q_k$ boxes filled with $-$, from top to bottom, to each row-ends of $\sqcup_{i < k} S_k$ such that
    \begin{itemize}
        \item at most one box is added in each row end, and
        \item the signs in $\sqcup_{i \leq k} S_k$ are alternating across the row.
    \end{itemize}

    We next construct the $\nu$-antitableau.
    For $A_\frakq(\lambda)$, the shape of $(p,q)$-signed tableau and the $\nu$-antitableau are the same.
    The shape of the $(p,q)$-signed tableau is partitioned into $\sqcup_{1 \leq i \leq r} S_i$, which is the same as in the definition of the $(p,q)$-signed tableau.
    For each $S_i$, we fill $\nu_{i,1},\ldots,\nu_{i,a_i}$ from top to bottom, where $\nu_i = \nu_i(\underline{d}, \lambda) = \{\nu_{i,1},\ldots,\nu_{i,a_i}\}$ with $\nu_{i,1}> \cdots > \nu_{i,a_i}$.
    When the resulting tableau $S$ is a $\nu$-antitableau, let $\Ann(A_{\frakq}(\lambda)) = S$, which is possibly equivalent to the formal zero tableau explained below.
    We now define the two invariants $\mathrm{overline}(S_i, S_{i+1})$ and $\mathrm{sing}(S_i, S_{i+1})$ associated with cohomological inductions.
    Set
    \[
    \mathrm{sing}(S_i, S_{i+1}) = \#(\nu_{i} \cap \nu_{i+1}).
    \]
    For $S_i$ and $S_{i+1}$ as in the definition of $(p,q)$-signed tableau $S = \sqcup_i S_i$, let $m$ be the largest integer such that for any $i$ with $i \leq m$, the $(a_i - m + i)$-th box of $S_i$ is strictly left to the $i$-th box of $S_{i+1}$.
    If such an integer $m$ does not exist, set $m=0$.
    We then define $\mathrm{overlap}(S_i, S_{i+1})$ by the nonnegative integer $m$.
    In the following, we give an algorithm \cite[Procedure 7.5]{Trapa_2001} to obtain a $\nu$-antitableau or the formal zero tableau from the tableau $S$.
    By \cite[Theorem 7.9]{Trapa_2001}, the cohomological induction $A_\frakq(\lambda)$ is zero if and only if the tableau $S$ is equivalent to the formal zero tableau.
    More precisely, let $S' = \sqcup_i S_i'$ be the tableau after Trapa's algorithm.
    Let $\nu_i'$ be the segment consisting of the entries of $S_i'$.
    The cohomological induction $A_\frakq(\lambda)$ is nonzero if and only if the resulting tableau $S' = \sqcup_i S'_i$ is not the zero tableau and satisfies
    \begin{itemize}
        \item $\nu_i \geq \nu_{i+1}$, and
        \item $\mathrm{overlap}(S'_i, S'_{i+1}) \geq \mathrm{sing}(S_i', S_{i+1}')$ for any $i$.
    \end{itemize}

    Trapa's algorithm is the following.
    For $S = \sqcup_{i} S_i$, the algorithm is generated by replacing adjacent skew columns $S_i \sqcup S_{i+1}$ with $S_i' \sqcup S_{i+1}'$.
    Set $R = S_i \sqcup S_{i+1}$ and $R' = S_{i}' \sqcup S_{i+1}'$.
    When $R'$ is the zero tableau, we understand that the tableau $S$ is equivalent to the zero tableau.
    We review the construction of $R'$ that is an arrangement of $\nu_i \sqcup \nu_{i+1}$ with the shape $R$.
    \begin{enumerate}
        \item If one has $\mathrm{overlap}(S_i, S_{i+1}) = \mathrm{sing}(S_i, S_{i+1}) < \min\{a_i, a_{i+1}\}$ or $\mathrm{overlap}(S_i, S_{i+1}) > \mathrm{sing}(S_i,S_{i+1})$, set $R' = R$.
        Here, $a_i = \#(\nu_i)$.
        \item If one has $\mathrm{overlap}(S_i, S_{i+1}) < \mathrm{sing}(S_i, S_{i+1})$, then $R'$ is the zero tableau.
        \item Assume $\mathrm{overlap}(S_i, S_{i+1}) = \mathrm{sing}(S_i, S_{i+1}) = a_{i+1}$.
        In this case, we have $\nu_{i+1} \subset \nu_{i}$.
        We define $R'$ by induction on $m = \nu_{i+1,a_{i+1}} - \nu_{i,a_{i}}$.
        When $m=0$, set $R' = R$.
        When $m>0$, set $\nu_{i+1}(-) = \{\nu_{i+1,1}-1,\ldots,\nu_{i+1,a_{i+1}}-1\}$.
        We define $S_{i+1}(-)$ by the tableau with the shape $S_{i+1}$ filled with $\nu_{i+1}(-)$.
        Set $R(-) = S_{i} \sqcup S_{i+1}(-)$.
        By induction hypothesis, $R(-)'$ is defined.
        Then, in $R(-)'$, there exists at most one box $B$ filled with $\nu_{i+1, 1}-1$ and strictly to the right of the unique box filled with $\nu_{i+1,1}$ in $R(-)'$.
        If the box $B$ exists, add one to the entry in $B$.
        If no such box exists, add one to the entry in the left-most box filled with $\nu_{i+1,1}-1$ in $R(-)'$.
        We denote the resulting tableau by $R(-)_1'$.
        Now construct $R(-)_2'$ by the same procedure applied to $R(-)_1'$, but instead considering the entries $\nu_{i+1,2}$ and $\nu_{i+1,2}-2$.
        By the same procedure, we get $R(-)_{a_{i+1}}'$.
        Set $R' = R(-)_{a_{i+1}}'$.
        \item Assume $\mathrm{overlap}(S_i, S_{i+1}) = \mathrm{sing}(S_i, S_{i+1}) = a_{i}$.
        In this case $\nu_{i} \subset \nu_{i+1}$.
        We define $R'$ by induction on $m = \nu_{i+1,1} - \nu_{i,1}$.
        When $m=0$, set $R' = R$.
        When $m>0$, set $\nu_{i}(+) = \{\nu_{i,1}+1,\ldots,\nu_{i,a_{i}} + 1\}$.
        We define the tableau $S_i(+)$ by the tableau with the shape $S_i$ filled with $\nu_i(+)$.
        Set $R(+) = S_i(+) \sqcup S_{i+1}$.
        By induction hypothesis, $R(+)'$ is defined.
        Then, in $R(+)'$, there exists at most one box $B$ filled with $\nu_{i, a_{i}} + 1$ and strictly to the left of the unique box filled with $\nu_{i,a_i}$ in $R(+)'$.
        If the box $B$ exists, subtract one from the entry in the box $B$.
        If no such box exists, subtract one from the entry in the right-most box filled with $\nu_{i, a_{i}}+1$ in $R(+)'$.
        We denote the resulting tableau by $R(+)_1'$.
        Now construct $R(+)_2'$ by the same procedure applied to $R(+)_1'$, by the same procedure again, we get $R(+)_{a_{i}}'$.
        Set $R' = R(+)_{a_{i}}'$.
    \end{enumerate}

    We still need to consider a partition of the resulting tableau $R'$ into $R' = S_{i}' \sqcup S_{i+1}'$.
    The last box in $R'$ is the right-most box filled with $\nu_{i+1, a_{i+1}}' = \min\{\nu_{i,a_i}, \nu_{i+1, a_{i+1}}\}$.
    The box next to the last box is the right-most box filled with $\nu_{i+1, a_{i+1}} + 1$.
    This procedure stops when the entry of the box reaches $\min\{\nu_{k,1}, \nu_{k+1,1}\}$.
    Let ${S'}_i$ be the tableau of the remaining boxes.
    We then obtain the partition $R' = S'_{i} \sqcup S'_{i+1}$.
    
    If interested, the readers are recommended to calculate examples and check the well-definedness of the above definition.
    One can calculate the overlap explicitly by \cite[Theorem 3.6]{Chengyu_2024} and \cite[Lemma 5.4]{C-Huang_2024}.
    For the explicit description of entries in the tableau $R'$, see \cite[\S 4.5]{C-Huang_2024}.

\subsection{Unitary lowest weight representations as cohomological inductions}
    In this subsection, we determine a cohomological induction that is isomorphic to a given lowest weight representation $\pi_\lambda$.
    The following is one of the easiest cases in which to calculate the $K$-types.
    
\begin{lem}\label{q_j=0}
    Let $\frakq$ be a holomorphic $\theta$-stable parabolic subalgebra corresponding to $\underline{d} = \{(p_i, q_i)_i\}$ such that $p_iq_i = 0$ for any $i$ and $\pi = A_\frakq(\lambda)$ be a cohomological induction in the mediocre range.
    Put $j = \max\{i \mid q_i = 0\}$.
    Then, $\pi$ is a nonzero if and only if $\nu_{\leq j}$ and $\nu_{>j}$ are multiplicity free.
    If $\pi$ is nonzero, then $\pi$ is a unitary lowest weight representation with the lowest $K$-type
    \[
    \lambda + 2\rho(\fraku \cap \frakp_\bbC) = \lambda + (\underbrace{q,\ldots,q}_p, \underbrace{-p,\ldots,-p}_q).
    \]
    Moreover, let
    \[
    \sigma_i =
    \begin{cases}
        \lambda_i + q & \text{if $1 \leq i \leq p$.}\\
        \lambda_i - p &\text{if $p+1 \leq i \leq N$.}
    \end{cases}
    \]
    We denote by $i_0$ the maximal positive integer such that $\sigma_{p+\min\{p,q\}+1-i_0} \geq \sigma_{p+1-i_0}$, if it exists.
    If there is no such integer, put $i_0 = 0$.
    Then, the first column of the tableau $\Ann(A_{\frakq}(\lambda))$ consists of
    \[
    \sigma_{1},\sigma_2,\ldots,\sigma_{p-i_0}, \underbrace{\sigma_{p+\min\{p,q\}+1-i_0},\ldots, \sigma_{p+\min\{p,q\}}}_{i_0}, \sigma_{p+\min\{p,q\}+1},\ldots,\sigma_{N}
    \]
    and the second column consists of
    \[
    \sigma_{p+1},\ldots,\sigma_{p+\min\{p,q\}-i_0},\underbrace{\sigma_{p+1-i_0},\ldots,\sigma_{p}}_{i_0}.
    \]
\end{lem}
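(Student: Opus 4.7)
My plan proceeds in three steps, matching the three assertions of the lemma: identifying $A_\frakq(\lambda)$ as a lowest weight representation with the stated lowest $K$-type, establishing the multiplicity-free criterion for nonvanishing, and computing the first two columns of the annihilator tableau.

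First, for the lowest $K$-type, the hypotheses $p_iq_i = 0$ and holomorphicity together force $p_i = a_i$ for $i \leq j$ and $q_i = a_i$ for $i > j$, so a direct sum over $\triangle(\frakp_+) = \{e_a - e_b \mid 1 \leq a \leq p < b \leq N\}$ yields $\fraku \cap \frakp_\bbC = \frakp_+$ and $2\rho(\fraku\cap \frakp_\bbC) = (q,\ldots,q,-p,\ldots,-p)$. Set $\mu := \lambda + 2\rho(\fraku\cap \frakp_\bbC)$. In the mediocre range, the $K$-type formula~\eqref{K-type_formula} places every $K$-type of $A_\frakq(\lambda)$ in the cone $\mu + \bbZ_{\geq 0}\triangle(\frakp_+)$. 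A highest weight vector of $F(\mu) \subset A_\frakq(\lambda)$ is killed by the compact negative roots (being a $K$-highest weight vector) and also by $\frakp_-$ (since no $K$-type weight lies below $\mu$ in the cone). Hence it is annihilated by $\frakb^-$ and generates the representation, so $A_\frakq(\lambda) \simeq \pi_\mu$.

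Second, the nonvanishing criterion will follow from Trapa's algorithm of~\S\ref{trapa_algorithm}. By holomorphicity, $S_1,\ldots,S_j$ all consist of plus boxes and stack vertically into a single column of $p$ boxes; consequently, for any $1 \leq i < i+1 \leq j$ one has $\mathrm{overlap}(S_i, S_{i+1}) = 0$, and cases~(1) and~(2) of Trapa's algorithm say the output is zero precisely when $\mathrm{sing}(S_i, S_{i+1}) = \#(\nu_i \cap \nu_{i+1}) > 0$. The analogous statement holds for the minus half. The mediocre range hypothesis then promotes pairwise disjointness of consecutive segments to full multiplicity-freeness on each half: if $\nu_a$ and $\nu_c$ share an element with $a < c$ in the same half, an elementary argument on segments forces some intermediate $\nu_b$ to also share an element with one of them.

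Third, for the explicit description of the first two columns of $\Ann(A_\frakq(\lambda))$, I would trace Trapa's algorithm step by step. The signed tableau has shape $(2^{\min(p,q)}, 1^{|p-q|})$; filling each $S_i$ top to bottom with $\nu_{i,1},\ldots,\nu_{i,a_i}$ places the plus-entries in column~1 (rows $1$ through $p$) and the minus-entries partially into column~2 and partially into new rows of column~1. Cases~(3) and~(4) of the algorithm then exchange entries between the two columns as needed to enforce the antitableau condition (strict decrease down columns, weak decrease along rows). The integer $i_0$ counts exactly these exchanges: the maximality of $\sigma_{p+\min(p,q)+1-i_0} \geq \sigma_{p+1-i_0}$ singles out the deepest row where a swap happens, and the explicit lists in the lemma record which $\sigma_i$ lands in column~1 versus column~2 after the algorithm terminates. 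The main obstacle is exactly this third step: careful bookkeeping of the iterated applications of cases~(3) and~(4) around the boundary index $i_0$, and verifying that the displayed placement of $\sigma_i$-entries is both consistent with Trapa's output and forms a valid $\nu$-antitableau. This is handled by a case analysis on the relative sizes of adjacent $\sigma$-values near the boundary between the two columns.
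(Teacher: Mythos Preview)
Your proposal is correct and follows essentially the same route as the paper, which dispatches the lemma in one line: ``The statement follows immediately from (\ref{K-type_formula}) and Trapa's algorithm.'' Your three steps simply unpack that sentence --- the $K$-type formula for the lowest $K$-type and lowest-weight property, and Trapa's algorithm for both the nonvanishing criterion and the explicit description of $\Ann(A_\frakq(\lambda))$ --- so there is no genuine divergence in method. Your segment argument in step~2 (that under the mediocre hypothesis, disjointness of consecutive $\nu_i, \nu_{i+1}$ forces $\nu_{i+1}$ entirely to the left of $\nu_i$, hence full pairwise disjointness by transitivity) is a correct elaboration of what the paper leaves implicit.
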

\begin{proof}
    The statement follows immediately from (\ref{K-type_formula}) and Trapa's algorithm.
\end{proof}

    We explicitly describe $\pi_\lambda$ in terms of cohomological inductions as follows:
\begin{lem}\label{A_q_lt_wt_rep}
    Let $\pi_\lambda$ be a unitary lowest weight representation with lowest $K$-type $\lambda = (\lambda_1,\ldots\lambda_N)$.
    We then have the following:
    \begin{enumerate}
        \item When $\lambda_p -\lambda_{p+1} < N-p', N-q'$, let $\frakq$ be the $\theta$-stable parabolic subgroup corresponding to
        \[
        \{\underbrace{(1,0),\ldots, (1,0)}_{p-p'}, (p', N-(\lambda_p-\lambda_{p+1})-p'), \underbrace{(0,1),\ldots,(0,1)}_{\lambda_p-\lambda_{p+1}-p+p'}\}
        \]
        and
        \[
        \mu = (\mu_1,\ldots,\mu_N)
        \]
        be an element in $\bbZ^N$ defined by
        \[
        \mu_i =
        \begin{cases}
            \lambda_i-q &\text{if $i \leq p-q'$}.\\
            \lambda_{p+1}+p-p' &\text{if $p-p' < i \leq N-(\lambda_{p}-\lambda_{p+1})-p'$}.\\
            \lambda_i + p &\text{if $N-(\lambda_{p}-\lambda_{p+1})-p' < i$}.
        \end{cases}
        \]
        Then, $A_{\frakq}(\mu) \isom \pi_\lambda$.
        \item When $\min\{N-p', N-q'\} \leq \lambda_p -\lambda_{p+1}$, let $\frakq$ be the $\theta$-stable parabolic subalgebra corresponding to
        \begin{align*}\label{q_q_j=0}
        \{\underbrace{(1,0),\ldots,(1,0)}_{p-p'}, (p',0),(0,q'),\underbrace{(0,1),\ldots, (0,1)}_{q-q'}\}
        \end{align*}
        and $\mu$ be an element in $\bbZ^N$ defined by
        \[
        \mu = (\underbrace{\lambda_1-q, \ldots, \lambda_p -q}_p,\underbrace{\lambda_{p+1} - p,\ldots,\lambda_N-p}_{q}).
        \]
        Then, $A_\frakq(\mu) \isom \pi_\lambda$.
        \item Let $\frakq$ be the $\theta$-stable parabolic subalgebra corresponding to
        \begin{align*}\label{q_q_j=0}
        \{\underbrace{(1,0),\ldots,(1,0)}_{p-p'}, (p',0),(0,q'),\underbrace{(0,1),\ldots, (0,1)}_{q-q'}\}
        \end{align*}
        and $\mu$ be an element in $\bbZ^N$ defined by $\mu = (\mu_1,\ldots,\mu_N)$ such that $\mu_1 \geq \cdots \geq \mu_p, \mu_{p+1} \geq \cdots \geq \mu_N,\mu_p = \mu_{p-1}= \cdots = \mu_{p-p'+1}$, and $\mu_{p+1} = \mu_{p+2} = \cdots = \mu_{p+q'}$.
        Suppose that $A_\frakq(\mu)$ is in the mediocre range.
        Then, $A_\frakq(\mu)$ is a nonzero lowest weight representation and the lowest $K$-type $\lambda = (\lambda_1,\ldots,\lambda_N)$ of $A_\frakq(\mu)$ satisfies $\min\{N - p'(\lambda), N - q'(\lambda)\} \leq \lambda_p - \lambda_{p+1}$, where $p' = \#\{i \mid 1 \leq i \leq p, \lambda_i = \lambda_p\}$ and $q' = \#\{i \mid p+1 \leq i \leq N, \lambda_i = \lambda_{p+1}\}$.
    \end{enumerate}
\end{lem}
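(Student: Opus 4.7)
The plan is to identify, in each of the three parts, the lowest $K$-type of $A_\frakq(\mu)$ via the formula $\mu + 2\rho(\fraku \cap \frakp_\bbC)$ and to use the fact that a unitary lowest weight representation is determined by its lowest $K$-type to conclude $A_\frakq(\mu) \cong \pi_\lambda$. Throughout, the chosen $\theta$-stable parabolic $\frakq$ satisfies $\fraku \cap \frakp_\bbC \subset \frakp_+$, so whenever $A_\frakq(\mu)$ is nonzero it is a lowest weight $(\frakg, K)$-module.

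For parts (2) and (3), every block of $\frakq$ satisfies $p_iq_i = 0$, so Lemma \ref{q_j=0} applies directly. In part (2), the only non-automatic mediocre range condition is between the $(p',0)$ and $(0,q')$ blocks, and there the bound $\mu_p - \mu_{p+1} \geq -\max\{p',q'\}$ rewrites via $\lambda = \mu + 2\rho(\fraku \cap \frakp_\bbC)$ as the hypothesis $\lambda_p - \lambda_{p+1} \geq \min\{N-p', N-q'\}$. Multiplicity-freeness of $\nu_{\leq j}$ and $\nu_{>j}$ is immediate from the strict inequalities $\lambda_1 > \cdots > \lambda_{p-p'}$ and $\lambda_{p+q'+1} > \cdots > \lambda_N$ (forced by the definitions of $p'$ and $q'$), together with a direct check, from dominance of $\mu$, that the segment $\nu_j$ is disjoint from the surrounding singletons. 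The lowest $K$-type formula in Lemma \ref{q_j=0} then produces $\lambda$, and uniqueness finishes the identification. Part (3) is the converse: for the same holomorphic $\frakq$ with a general $\mu$ in the mediocre range, Lemma \ref{q_j=0} again yields a nonzero unitary lowest weight representation with lowest $K$-type $\lambda = \mu + 2\rho(\fraku \cap \frakp_\bbC)$. The constancy of $\mu$ on the $(p',0)$ and $(0,q')$ blocks forces $p'(\lambda) \geq p'$ and $q'(\lambda) \geq q'$, and the mediocre range bound between these two adjacent blocks rearranges to $\lambda_p - \lambda_{p+1} \geq \min\{N-p', N-q'\} \geq \min\{N-p'(\lambda), N-q'(\lambda)\}$.

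For part (1), the middle block $(p', q_j)$ with $q_j = N - (\lambda_p - \lambda_{p+1}) - p' \geq 1$ is mixed, so Lemma \ref{q_j=0} is not directly applicable. Nevertheless, the noncompact roots joining the top and bottom of the middle block have $x_{\underline{d}}$-eigenvalue zero and therefore lie in $\frakl$, so $\fraku \cap \frakp_\bbC \subset \frakp_+$ still holds. A direct accounting shows that $2\rho(\fraku \cap \frakp_\bbC)$ equals $q$ on positions $1,\ldots,p-p'$, equals $\lambda_p - \lambda_{p+1} + p' - p$ on positions $p-p'+1,\ldots,p$, equals $-(p-p')$ on positions $p+1,\ldots,p+q_j$, and equals $-p$ on positions $p+q_j+1,\ldots,N$; substituting the stated $\mu$ and using the unitarity inequality $q_j \leq q'$ yields $\mu + 2\rho(\fraku \cap \frakp_\bbC) = \lambda$. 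Nonvanishing follows from the $K$-type formula once one observes that the $\triangle_c^+$-dominant weight $\lambda$ appears as a $K$-type; together with $\fraku \cap \frakp_\bbC \subset \frakp_+$ forcing $A_\frakq(\mu)$ to be a lowest weight module, uniqueness of $\pi_\lambda$ concludes the case.

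The main obstacle is in part (1), where the conclusion of Lemma \ref{q_j=0} (nonvanishing criterion together with the identification of $A_\frakq(\mu)$ as a lowest weight module of prescribed lowest $K$-type) must be extended to the mixed-block setting, since its proof relies on the hypothesis $p_iq_i = 0$. The cleanest route is to analyze the bottom layer of the cohomological induction directly, exploiting $\fraku \cap \frakp_- = 0$ to see that the generator of $A_\frakq(\mu)$ is annihilated by $\frakp_-$; if more care is required, one can instead compute the Barbasch-Vogan tableaux $(\Ann, \mathrm{AS})$ of $A_\frakq(\mu)$ via Trapa's algorithm of Section \ref{trapa_algorithm} and compare them with those of $\pi_\lambda$ obtained from its parabolic Verma realization, then invoke the Barbasch-Vogan parametrization.
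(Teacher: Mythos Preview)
Your approach is essentially the same as the paper's, and in fact you are more careful about several details that the paper glosses over. The paper's proof of (1) simply computes $2\rho(\fraku\cap\frakp_\bbC)$ exactly as you do, observes that $\mu+2\rho(\fraku\cap\frakp_\bbC)=\lambda$ is $\triangle_c^+$-dominant, and then concludes $A_\frakq(\mu)\cong\pi_\lambda$ in one line ``by the formula of the multiplicities of $K$-types''; (2) is dismissed as a restatement of Lemma~\ref{q_j=0}; and (3) is done via the same mediocre-range inequality and the observation $p'(\lambda)\ge p',\,q'(\lambda)\ge q'$ that you isolate.

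Your final ``main obstacle'' paragraph is therefore overcautious. The paper does not run Trapa's algorithm for (1); the one-line justification is exactly what you already wrote in the preceding paragraph: $\fraku\cap\frakp_\bbC\subset\frakp_+$ (so $A_\frakq(\mu)$ is lowest weight, cf.\ the paper's citation of \cite[Lemma~1.7]{1987_Adams}), the $K$-type formula~(\ref{K-type_formula}) in the mediocre range shows $\lambda$ occurs and is the lowest $K$-type, and irreducibility in the weakly fair range for unitary groups finishes it. Your additional remark that the unitarity bound $\lambda_p-\lambda_{p+1}\ge N-p'-q'$ is what guarantees $q_j\le q'$ (needed so that $\mu+2\rho(\fraku\cap\frakp_\bbC)$ really equals $\lambda$ on positions $p+1,\dots,p+q_j$) is a detail the paper omits.
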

\begin{proof}
    We calculate the $K$-types of $A_\frakq(\lambda)$.
    For (1), we have
    \begin{align*}
    2\rho(\fraku \cap \frakp_\bbC) 
    &= 
    (\underbrace{q,\ldots,q}_{p-p'}, \underbrace{\lambda_p-\lambda_{p+1}-p+p', \ldots, \lambda_p-\lambda_{p+1}-p+p'}_{p'},\\
    &\qquad \qquad \qquad \qquad\underbrace{-p+p',\ldots,-p+p'}_{N-(\lambda_{p}-\lambda_{p+1})-p')}, \underbrace{-p,\ldots,-p}_{\lambda_p-\lambda_{p+1}-p+p'})
    \end{align*}
    and
    \[
    \mu + 2\rho(\fraku \cap \frakp) = (\lambda_1,\ldots,\lambda_p, \lambda_{p+1},\ldots,\lambda_{N}).
    \]
    Since $\mu+2\rho(\fraku \cap \frakp_\bbC)$ is $\triangle_c^+$-dominant, the representation $A_\frakq(\lambda)$ is isomorphic to $\pi_\lambda$ by the formula of the multiplicities of $K$-types.
    The statement (2) is a restatement of Lemma \ref{q_j=0}.
    We prove (3).
    Suppose $A_\frakq(\mu) \isom \pi_\lambda$.
    In this case, we have
    \[
    \nu_{p-p'+1} = [\mu_p - (p-q-1)/2, \mu_p - (p-q+1)/2 + p']
    \]
    and
    \[
    \nu_{p-p'+2} = [\mu_{p+1}-(p-q-1)/2-q',\mu_{p+1}-(p-q+1)/2].
    \]
    Since it is in the mediocre range, either $\mu_p - (p-q+1)/2 + p' \geq \mu_{p+1}-(p-q+1)/2$ or $\mu_p - (p-q-1)/2 \geq \mu_{p+1}-(p-q-1)/2-q'$ holds.
    This is equivalent to $\mu_p -\mu_{p+1} \geq -p'$ or $\mu_p -\mu_{p+1} \geq -q'$.
    By (2), we have $\mu_p = \lambda_p-q$ and $\mu_{p+1} = \lambda_{p+1}+p$.
    Then, the statement (3) follows from $p'(\lambda) \geq p'$ and $q'(\lambda) \geq q'$.
\end{proof}

    The signed tableaux for unitary lowest weight representations are as follows:

\begin{cor}\label{signed_tableau_lt_wt_rep}
    For a unitary lowest weight representation $\pi$, the signed tableau $\mathrm{AS}(\pi)$ has at most two columns.
    If the tableau $\mathrm{AS}(\pi)$ has only one column, $\pi$ is a character.
    If $\mathrm{AS}(\pi)$ has two columns, the signs are arranged in the order of $+$ and $-$ in rows with two boxes.
    In particular, the tableau $\mathrm{AS}(\pi)$ is uniquely determined by its shape.
\end{cor}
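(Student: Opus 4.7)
The plan is to realize $\pi = \pi_\lambda$ as a cohomological induction $A_{\frakq_{\underline{d}}}(\mu)$ via Lemma~\ref{A_q_lt_wt_rep}, then read off $\mathrm{AS}(\pi)$ by applying the signed tableau construction of \S\ref{trapa_algorithm} to the block decomposition $\underline{d}$. In each of the cases (1)--(3) of Lemma~\ref{A_q_lt_wt_rep}, the tuple $\underline{d}$ has the form
\[
\underbrace{(1,0),\ldots,(1,0)}_{p-p'},\ \text{middle block(s)},\ \underbrace{(0,1),\ldots,(0,1)}_{k},
\]
where the middle is either the single block $(p', q'')$ of case (1) or the consecutive pair $(p',0),(0,q')$ of cases (2) and (3).

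I would build the signed tableau block by block. The initial $(1,0)$ blocks each contribute a plus box that must begin a new row, since alternation forbids extending a row already ending in $+$; this yields a column of $p - p'$ plus boxes. When the middle block or blocks are processed, each plus must again begin a new length-$1$ row $[+]$, while each minus either completes a top $[+]$ row to $[+,-]$ or begins a new length-$1$ row $[-]$, with the Young-diagram constraint forcing the distribution. Finally, each trailing $(0,1)$ block adds one minus, completing an available $[+]$ row to $[+,-]$ or, once no $[+]$ row remains, starting a new $[-]$ row at the bottom. At every stage every length-$2$ row is exactly $[+,-]$ and every length-$1$ row is $[+]$ or $[-]$, so the resulting signed tableau has at most two columns, with two-box rows of the claimed form.

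The remaining conclusions then follow quickly. If the tableau has only one column, its shape is the single column of $N$ boxes, which corresponds to the zero nilpotent orbit; hence $\pi$ is finite-dimensional, and since $G = \U(p,q)$ is noncompact (the compact case $pq = 0$ being trivial) this forces $\pi$ to be one-dimensional, i.e., a character. For uniqueness, a two-column shape with $a$ rows of length $2$ forces exactly $p - a$ length-$1$ rows of sign $+$ and $q - a$ length-$1$ rows of sign $-$; since all length-$2$ rows already equal $[+,-]$, the equivalence class under interchange of equal-length rows is determined by the shape alone.

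The main delicate point is the case (1) step, where $(p', q'')$ contributes pluses and minuses simultaneously. There one must verify, by splitting on whether $q'' \leq p - p'$ or $q'' > p - p'$, that the alternation and Young-diagram constraints always produce a layout with no row of length $\geq 3$, and that the subsequent $(0,1)$ additions preserve this structure. Routine bookkeeping with the identity $q'' + k = q$, which follows directly from the formulas in Lemma~\ref{A_q_lt_wt_rep}, confirms the box count and the two-column shape in every sub-case.
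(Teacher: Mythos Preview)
Your proof is correct and follows the same approach as the paper: realize $\pi_\lambda$ as an $A_\frakq(\mu)$ via Lemma~\ref{A_q_lt_wt_rep} and then read off the signed tableau from the block data $\underline{d}$ using the construction in \S\ref{trapa_algorithm}. The paper's proof is literally the one-liner ``This follows from Lemma~\ref{A_q_lt_wt_rep} and the definition of $(p,q)$-signed tableau for $A_\frakq(\lambda)$,'' so your argument is an explicit unpacking of that.

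Two minor remarks. First, only cases (1) and (2) of Lemma~\ref{A_q_lt_wt_rep} are needed here, since together they cover every $\lambda$; case (3) is a converse statement and does not realize a given $\pi_\lambda$ (though its $\underline{d}$ has the same shape as in case (2), so no harm is done). Second, for the ``one column implies character'' step you invoke the general fact that asymptotic support equal to the zero orbit forces finite-dimensionality; the paper presumably has in mind the more direct route of inspecting the explicit $\underline{d}$ and seeing that a one-column tableau arises only when $pq=0$ or when the block structure degenerates to a character. Either argument is fine.
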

\begin{proof}
    This follows from Lemma \ref{A_q_lt_wt_rep} and the definition of $(p,q)$-signed tableau for $A_\frakq(\lambda)$.
\end{proof}

\subsection{Nonvanishing criterion for \texorpdfstring{$A_\frakq(\lambda)$}{}}

    When $\frakq$ is a maximal $\theta$-stable parabolic subalgebra, the nonvanishing of $A_{\frakq}(\lambda)$ can be described as follows by Trapa's algorithm.
    
    \begin{lem}[{\cite[Lemma 2.6]{C-Huang_2024}}]\label{A_q_nonvanishing}
        Let $\frakq$ be a $\theta$-stable parabolic subalgebra corresponding to $\underline{d} = \{(p_1,q_1), (p_2,q_2)\}$ and $\lambda$ be a $\triangle_c^+$-dominant integral weight such that $A_{\frakq}(\lambda)$ is in the mediocre range.
        Then, $A_{\frakq}(\lambda)$ is non-zero if and only if
        \[
        \min\{p_1,q_{2}\} + \min\{q_1,p_{2}\} \geq \#(\nu_1 \cap \nu_{2}).
        \]
        Here, $\nu_i$ is the segments associated with $\underline{d}$ and $\lambda$.
    \end{lem}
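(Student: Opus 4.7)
The plan is to directly apply Trapa's algorithm from Section \ref{trapa_algorithm} to the two-segment case, reducing the abstract nonvanishing criterion to the closed-form inequality via a combinatorial identification of the overlap.

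The first step is to compute $\mathrm{overlap}(S_1, S_2)$. Since $S_1$ is a single column of alternating signs and $S_2$ is appended by adding at most one box to each row-end from top to bottom subject to sign-alternation, a short combinatorial analysis shows that the number of length-$2$ rows in the resulting signed tableau equals
\[
m := \min\{p_1, q_2\} + \min\{q_1, p_2\},
\]
comprising at most $\min\{p_1, q_2\}$ rows of shape $(+, -)$ (obtained by pairing a $+$ from $S_1$ with a $-$ from $S_2$) and at most $\min\{q_1, p_2\}$ rows of shape $(-, +)$. Unwinding the definition of $\mathrm{overlap}$, the bottom $m$ boxes of $S_1$ occupy column $1$ and the top $m$ boxes of $S_2$ occupy column $2$, so $\mathrm{overlap}(S_1, S_2) = m$. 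Together with $\mathrm{sing}(S_1, S_2) = \#(\nu_1 \cap \nu_2)$ from the definition, the stated inequality becomes $\mathrm{overlap}(S_1, S_2) \geq \mathrm{sing}(S_1, S_2)$.

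The second step is to verify that Trapa's algorithm outputs a nonzero tableau if and only if this inequality holds. A case analysis on the four cases of the algorithm applied to $R = S_1 \sqcup S_2$ gives cases (1) and (2) immediately: in case (1) we have $R' = R \neq 0$, while case (2) produces the zero tableau. In the recursive cases (3) and (4), where the overlap equals $a_2$ (resp.~$a_1$) and coincides with $\mathrm{sing}$, the algorithm shifts the entries of $S_2$ (resp.~$S_1$) by $\pm 1$ and recurses on the smaller configuration. An induction on the shift parameter, using that the shape and thus the overlap is preserved, shows the recursion terminates with a nonzero tableau.

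The main obstacle I anticipate is the careful bookkeeping in cases (3) and (4): one must verify that each shift step preserves the inequality $\mathrm{overlap} \geq \mathrm{sing}$ so that the recursion never falls into case (2). This amounts to a finite combinatorial check on the relative positions of the segments $\nu_1$ and $\nu_2$ as they are iteratively shifted, and is where the mediocre-range hypothesis enters to rule out degenerate configurations.
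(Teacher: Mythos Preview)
The paper does not supply its own proof of this lemma; it is quoted verbatim from \cite[Lemma 2.6]{C-Huang_2024}. Your proposal to derive it from Trapa's algorithm (\S\ref{trapa_algorithm}) is exactly the natural route and is essentially how the cited reference proceeds. Your identification $\mathrm{overlap}(S_1,S_2)=\min\{p_1,q_2\}+\min\{q_1,p_2\}$ is correct: it is precisely the number of length-two rows in the signed tableau, and since $S_1$ occupies column~$1$ entirely, the overlap counts exactly the boxes of $S_2$ lying in column~$2$.

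Your case analysis is also right. One small sharpening: in cases (3) and (4) the recursion never escapes into case (2), not because of the mediocre hypothesis per se, but because the shape (hence the overlap) is preserved under each shift while $\mathrm{sing}$ remains equal to $a_2$ (resp.\ $a_1$) throughout---the containment $\nu_2\subset\nu_1$ (resp.\ $\nu_1\subset\nu_2$) persists until the base case $m=0$ is reached. The mediocre-range hypothesis is needed only to invoke Trapa's theorem itself, not for the two-segment combinatorics.
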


    Take a holomorphic $\theta$-stable parabolic subalgebra associated with $\underline{d} = \{(p_i,q_i)_{1 \leq i \leq r}\}$.
    Let $j = j(\underline{d})$ be the minimal integer $i$ such that $a_{\leq i} \geq p$.
    The cohomological inductions $A_\frakq(\lambda)$ for a holomorphic $\theta$-stable parabolic subalgebra are lowest weight by \cite[Lemma 1.7]{1987_Adams}.
    We will show that the converse holds in Lemma \ref{lemma_cond_d}.
    We can calculate the nonvanishing conditions and tableaux of such cohomological induction as follows:

    \begin{lem}\label{lemma_nonvanishing}
        Let $\frakq$ be a holomorphic $\theta$-stable parabolic subalgebra corresponding to $\underline{d} = \{(p_i,q_i)_{1 \leq i \leq r}\}$ and $A_\frakq(\lambda)$ be a cohomological induction in the mediocre range.
        Then, $A_\frakq(\lambda)$ is nonzero if and only if
        \begin{itemize}
            \item $\nu_{<j}$ and $\nu_{>j}$ are multiplicity free, and
            \item $\#(\nu_j \cap \nu_{<j}) \leq q_j$ and $\#(\nu_j \cap \nu_{>j}) \leq p_j$.
        \end{itemize}
        When $A_\frakq(\lambda)$ is nonzero and $q_j \neq 0$, set $\nu_j = \{\nu_{j,1},\ldots,\nu_{j,a_j}\} (resp.~\nu_{<j} \sqcup \nu_{>j} = \{\sigma_1,\ldots,\sigma_{t}\}$) with $\nu_{j,1} > \cdots > \nu_{j,a_j}$ (resp.~$\sigma_1 > \cdots > \sigma_{t}$) and $\nu_j \cap (\nu_{<j} \sqcup \nu_{>j}) = \{\sigma_{f+1}, \ldots, \sigma_{g}\}$.
        Let $m = \min\{f, q_j-\#(\nu_j \cap \nu_{<j})\}$ and $i_0$ be the maximal integer such that $1 \leq i_0 \leq g-f$ and $\sigma_{f+i_0} \geq \nu_{j, m+i_0}$.
        The first column of the tableau $\Ann(A_\frakq)$ consists of
        \begin{align*}
            &\sigma_1,\sigma_2,\ldots,\sigma_f, \\
            &\sigma_{f+1},\sigma_{f+2},\ldots,\sigma_{f+i_0}, \nu_{j, m+i_0+1}, \nu_{j,m+i_0+2},\ldots,\nu_{j,g}, \nu_{j,g+1},\nu_{j,a_j},\\
            &\sigma_{f+p-m+1},\ldots,\sigma_t
        \end{align*}
        and the second column consists of
        \[
        \nu_{j,1}, \nu_{j,2} \ldots, \nu_{j,m}, \nu_{j,m+1},\nu_{j, m+2}, \ldots, \nu_{j,m+i_0}, \sigma_{f+i_0+1}, \sigma_{f+i_0+2}, \ldots, \sigma_g, \sigma_{g+1},\ldots,\sigma_{\min\{t, f+p-m\}}
        \]
        from top to bottom.
        Here, we understand that there is no box next to the box filled with $\nu_{j, a_j}$ if $t\leq f+p-m$ in the first column.
        In particular, if $m \neq 0$, then the $(1,2)$-th entry of $\Ann(A_\frakq(\lambda))$ is $\nu_{j,1}$.
    \end{lem}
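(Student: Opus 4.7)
The plan is to apply Trapa's algorithm from Section~\ref{trapa_algorithm} iteratively to the signed tableau $\sqcup_i S_i$ attached to the holomorphic parabolic $\frakq$. By the holomorphic condition ($q_i = 0$ for $i<j$ and $p_i = 0$ for $i>j$), the block $S_i$ with $i<j$ is a vertical strip of $p_i$ plus boxes in column~$1$; the block $S_i$ with $i>j$ is a vertical strip of $q_i$ minus boxes in column~$2$; and $S_j$ occupies $q_j$ top rows in column~$2$ followed by $p_j$ middle rows in column~$1$. The resulting shape has $\max\{p,q\}$ boxes in the longer column and $\min\{p,q\}$ in the shorter, which I will call the first and second columns respectively.

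For the nonvanishing criterion, first consider consecutive pairs $(S_i, S_{i+1})$ with $i+1<j$ or $i>j$. In each case the two blocks sit in the same column of the signed tableau, so $\mathrm{overlap}(S_i, S_{i+1}) = 0$. Case~(2) of Trapa's algorithm then forces $\mathrm{sing}(S_i, S_{i+1}) = \#(\nu_i \cap \nu_{i+1}) = 0$, so adjacent segments are disjoint. Combined with the mediocre range condition $\nu_i \not\geq \nu_{i'}$ for $i<i'$, a short induction on $i' - i$ shows that $\nu_i$ lies strictly to the left of $\nu_{i'}$; hence $\nu_{<j}$ and $\nu_{>j}$ are multiplicity free. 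For the interaction with $S_j$, the base case comes from Lemma~\ref{A_q_nonvanishing} applied to $(S_{j-1}, S_j)$ viewed as the two-block data $\{(p_{j-1},0),(p_j,q_j)\}$, yielding $\#(\nu_{j-1} \cap \nu_j) \leq \min\{p_{j-1}, q_j\} \leq q_j$. Applying cases~(3)/(4) of Trapa's algorithm at $(S_{j-1}, S_j)$, the overlap entries migrate between $S'_{j-1}$ and $S'_j$; iterating the algorithm back toward $S_1$ and using multiplicity-freeness of $\nu_{<j}$, the individual bounds accumulate to $\#(\nu_{<j} \cap \nu_j) \leq q_j$. Symmetric iteration through $S_{j+1}, \ldots, S_r$ gives $\#(\nu_{>j} \cap \nu_j) \leq p_j$.

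For the explicit tableau, once the algorithm terminates I read off entries from the final configuration $\sqcup_i S'_i$. The entries of $\nu_j$ that overlap with $\nu_{<j}$ are pushed rightward into column~$2$ by Trapa's case~(4); the top $m = \min\{f,\, q_j - \#(\nu_j \cap \nu_{<j})\}$ entries $\nu_{j,1}, \ldots, \nu_{j,m}$ swap out of column~$1$ at the $(S_{j-1}, S_j)$ step, and a further $i_0$ swaps happen in the interleaved overlap band $\{\sigma_{f+1}, \ldots, \sigma_g\}$, where $i_0$ is the largest index with $\sigma_{f+i_0}\ge \nu_{j,m+i_0}$. The remaining entries of $\nu_{<j} \sqcup \nu_{>j}$ that lie strictly above or below $\nu_j$ fill the first column in their sorted positions $\sigma_1, \ldots, \sigma_f$ (top) and $\sigma_{f+p-m+1}, \ldots, \sigma_t$ (bottom), while the complementary $\sigma_{f+i_0+1}, \ldots, \sigma_{\min\{t,\,f+p-m\}}$ complete the second column below the top $\nu_j$ block. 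This matches the formula stated in the lemma; in particular, $m \neq 0$ forces $\nu_{j,1}$ to sit at position $(1,2)$.

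The main obstacle is the combinatorial bookkeeping in the explicit tableau. Trapa's cases~(3)/(4) are themselves defined inductively on the gap $\nu_{j+1, a_{j+1}} - \nu_{j, a_j}$, so tracking their cumulative effect across the $r-1$ pairs requires care, especially in distinguishing whether $m$ is capped by $f$ or by $q_j - \#(\nu_j \cap \nu_{<j})$ and whether $i_0$ is positive. The overlap and singularity formulas from \cite[Theorem 3.6]{Chengyu_2024} and \cite[Lemma 5.4]{C-Huang_2024} should streamline this analysis and give closed-form positions for the migrated entries.
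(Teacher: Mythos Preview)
Your approach is essentially the same as the paper's: both iterate Trapa's algorithm through the blocks $S_i$, extracting the multiplicity-free conditions on $\nu_{<j}$ and $\nu_{>j}$ from same-column adjacent pairs (where $\mathrm{overlap}=0$ forces $\mathrm{sing}=0$), then the bounds $\#(\nu_j\cap\nu_{<j})\le q_j$ and $\#(\nu_j\cap\nu_{>j})\le p_j$ from the pairs straddling $S_j$, and finally reading off the explicit column entries from the terminal configuration. The paper's proof is written at a comparable level of detail, describing the initial tableau explicitly and the effect of the algorithm on $S_{j-1}\sqcup S_j$ before asserting that the remaining steps toward $S_1$ and toward $S_r$ are analogous.
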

    \begin{proof}
        When $q_j = 0$, the statement follows from Lemma \ref{q_j=0}.
        When $q_j \neq 0$, we compute the tableau $\Ann(A_\frakq(\lambda))$.
        Let $h$ be the integer such that $\nu_{<j} = \{\sigma_1,\ldots, \sigma_h\}$.
        Before Trapa's algorithm, the first column of the tableau $\Ann(A_{\frakq}(\lambda))$ consists of
        \[
        \sigma_1,\ldots,\sigma_h,\nu_{j, m'+1},\ldots,\nu_{j,a_j},\ldots
        \]
        and the second column consists of
        \[
        \nu_{j,1},\nu_{j,2},\ldots,\nu_{j,m'}, \sigma_{h+1},\ldots
        \]
        from top to bottom.
        Here, $m' = \min\{h, q_j\}$.
        Let $\nu_{j-1} = \{\sigma_k,\ldots,\sigma_h\}$ and $S_i$ be the tableau consisting of entries with elements in $\nu_i$ defined in Trapa's algorithm.
        Consider $R = S_{j-1} \sqcup S_j$.
        By Trapa's algorithm, we have $R=R'$ if $\sigma_h \geq \nu_{j, m'}$ or $\nu_{j-1} \not\subset \nu_j$.
        In this case, $R$ is equivalent to zero when $q_j < \#(\nu_{j-1} \cap \nu_j)$.
        If not, the first column of $R'$ consists of
        \[
        \nu_{j, m'-h+k},\ldots,\nu_{j, m'}, \nu_{j, m'},\ldots
        \]
        and the second column consists of
        \[
        \nu_{j,1},\ldots,\nu_{j, m'-h-k-1},\sigma_{k},\ldots,\sigma_{h},\sigma_{h+1},\ldots
        \]
        from top to bottom.
        Note that if $\sigma_{h} < \nu_{j,m'}$, then $\sigma_k<\nu_{j,m'-h+k}$.
        Trapa's algorithm for $R = S_{j-2}' \sqcup S_{j-1}'$ is similar and $R$ is equivalent to zero if $q_j - \#(\nu_{j-1}) < \#(\nu_{j-2} \cap \nu_{j-1}')$.
        Also, the algorithm for $S_{j}' \sqcup S_{j+1}$ is similar.
        Continuing this procedure to the end, one obtains the lemma.
    \end{proof}

    The following statements, which are useful to compute $A_\frakq(\lambda)$, show that when the associated segments are linked, we may replace the linked segments or, conversely, partition them into smaller segments.
    
    \begin{cor}\label{cor_transform}
    Under the same notation as in Lemma \ref{lemma_nonvanishing}, suppose $A_\frakq(\lambda)$ is nonzero.
    Let $\nu'_1 \sqcup \cdots \sqcup \nu'_k = \nu_{<j}$ be a partition of $\nu_{<j}$ with $\nu'_1 > \cdots > \nu'_k$ and $\nu''_1 \sqcup \cdots \sqcup \nu''_\ell = \nu_{>j}$ be a partition of $\nu_{>j}$ with $\nu''_1 > \cdots > \nu''_\ell$.
    Let $\frakq'$ be the $\theta$-stable parabolic subalgebra associated with $\{(\#(\nu'_1), 0),\ldots, (\#(\nu'_k), 0), (p-\#(\nu_{<j}), q-\#(\nu_{>j})), (0, \#(\nu''_1)),\ldots, (0,\#(\nu_{\ell}''))\}$.
    Put $\pi = A(\frakq', \nu'_1\sqcup\cdots\sqcup\nu'_k\sqcup\nu_j\sqcup\nu''_1\sqcup\cdots\sqcup\nu''_{\ell})$.
    Then, if $\pi$ is in the mediocre range, we have $A_\frakq(\lambda) \isom \pi$.
    \end{cor}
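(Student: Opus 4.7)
The plan is to establish the isomorphism via the Barbasch-Vogan parametrization: I will show that $A_\frakq(\lambda)$ and $\pi$ share both their annihilators and their asymptotic supports, which by the theorem cited in Section \ref{BV-param} forces them to be isomorphic.

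First, I would verify that $\pi$ is a nonzero unitary lowest weight representation. Because $\frakq$ is holomorphic, the identities $p_j = p - \#(\nu_{<j})$ and $q_j = q - \#(\nu_{>j})$ hold, so the middle block of $\frakq'$ is exactly $(p_j,q_j)$, and $\frakq'$ is again a holomorphic $\theta$-stable parabolic subalgebra. Hence $\pi$ is a lowest weight representation by \cite[Lemma 1.7]{1987_Adams}. The nonvanishing criterion of Lemma \ref{lemma_nonvanishing}, applied to $\frakq'$, only refers to the multiplicity-freeness of $\nu'_1 \sqcup \cdots \sqcup \nu'_k = \nu_{<j}$ and $\nu''_1 \sqcup \cdots \sqcup \nu''_\ell = \nu_{>j}$ and to the inequalities $\#(\nu_j \cap \nu_{<j}) \leq q_j$ and $\#(\nu_j \cap \nu_{>j}) \leq p_j$; all of these are inherited from the corresponding conditions that already hold for the nonzero $A_\frakq(\lambda)$. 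Thus $\pi \neq 0$, and since the mediocre range is contained in the weakly fair range, $\pi$ is unitary.

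Next, I would read off both annihilators from Lemma \ref{lemma_nonvanishing}. The description there of the two columns of $\Ann(A_\frakq(\lambda))$ depends on $(\underline{d}, \lambda)$ only through the multiset $\nu_{<j} \sqcup \nu_{>j} = \{\sigma_1 > \cdots > \sigma_t\}$, the middle segment $\nu_j$, and the pair $(p_j,q_j)$: the auxiliary integers $f, g, m, i_0$ are all determined by these inputs alone. Refining $\nu_{<j}$ into $\nu'_1 \sqcup \cdots \sqcup \nu'_k$ and $\nu_{>j}$ into $\nu''_1 \sqcup \cdots \sqcup \nu''_\ell$ changes none of them, so the same formula produces $\Ann(\pi)$. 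Therefore $\Ann(A_\frakq(\lambda)) = \Ann(\pi)$.

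It remains to match the asymptotic supports. Both representations are unitary lowest weight, so by Corollary \ref{signed_tableau_lt_wt_rep} their signed tableaux are determined by their shapes alone. Since the annihilator tableau and the signed tableau of a cohomological induction have the same shape, the equality of annihilators just proved forces $\mathrm{AS}(A_\frakq(\lambda)) = \mathrm{AS}(\pi)$. The Barbasch-Vogan theorem then yields $A_\frakq(\lambda) \isom \pi$. I expect the main obstacle to be the second step: one must trace carefully through the combinatorics of Lemma \ref{lemma_nonvanishing} and confirm that every quantity entering its formula is genuinely a function of $(\nu_{<j} \sqcup \nu_{>j}, \nu_j, p_j, q_j)$ and is insensitive to how $\nu_{<j}$ and $\nu_{>j}$ are further partitioned.
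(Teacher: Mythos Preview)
Your overall strategy matches the paper's: both arguments observe that the explicit tableau description in Lemma \ref{lemma_nonvanishing} depends only on $(\nu_{<j}, \nu_j, \nu_{>j}, p_j, q_j)$ and not on any finer partition of $\nu_{<j}$ or $\nu_{>j}$, and then invoke the Barbasch--Vogan parametrization. The paper's one-line proof simply asserts this invariance; you spell out the separate verification of $\Ann$ and $\mathrm{AS}$, which is helpful.

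There is, however, one slip. The claim ``the mediocre range is contained in the weakly fair range'' is backwards: weakly fair implies mediocre, not conversely (compare the paper's characterizations---weakly fair means the means of the $\nu_i$ are non-increasing, while mediocre only demands $\nu_i \not\geq \nu_j$ for $i<j$). Hence you cannot deduce that $\pi$ is unitary from the hypothesis that it is in the mediocre range, and your appeal to Corollary \ref{signed_tableau_lt_wt_rep}, which is stated for \emph{unitary} lowest weight representations, is not justified as written. The repair is immediate: bypass Corollary \ref{signed_tableau_lt_wt_rep} and argue directly from the construction of the signed tableau in \S\ref{trapa_algorithm}. For any holomorphic $\frakq$ the skew columns $S_i$ with $i<j$ consist entirely of $+$ boxes in the first column, $S_j$ adds a second column, and the $S_i$ with $i>j$ consist entirely of $-$ boxes; so the signed tableau has at most two columns with each two-box row ordered $+,-$, and is therefore determined by its shape. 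Since that shape agrees with the shape of the $\nu$-antitableau, the equality of annihilators already forces $\mathrm{AS}(A_\frakq(\lambda)) = \mathrm{AS}(\pi)$, and unitarity is never needed.
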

    \begin{proof}
        The statement follows from Lemma \ref{lemma_nonvanishing}, since the tableau $A_\frakq(\lambda)$ does not depend on a partition of $\nu_{<j}$ and $\nu_{>j}$.
    \end{proof}

    \begin{cor}\label{cor_transform_2}
        Under the same notation as in Lemma \ref{lemma_nonvanishing}, suppose $A_\frakq(\lambda)$ is nonzero.
        When $\nu_{j-1} \subset \nu_j$ (resp.~$\nu_{j+1} \subset \nu_j$), let $\frakq'$ be the $\theta$-stable parabolic subalgebra corresponding to 
        \[\{(p_1,q_1), \ldots, (p_{j-2}, q_{j-2}), (p_{j} + p_{j-1}, q_{j}-p_{j-1}), (0, p_{j-1}),(p_{j+1},q_{j+1})\ldots, (p_r,q_r)\}
        \]
        \[
        \text{(resp.~$\{(p_1,q_1), \ldots, (p_{j-1}, q_{j-1}), (q_{j+1}, 0), (p_{j}-q_{j+1}, q_{j}+q_{j-1}),(p_{j+2}, q_{j+2})\ldots, (p_r,q_r)\}$)}
        \]
        and $\pi = A(\frakq', \nu_1,\ldots,\nu_j,\nu_{j-1}, \ldots, \nu_r)$ (resp.~$\pi = A(\frakq', \nu_1,\ldots,\nu_{j+1},\nu_{j}, \ldots, \nu_r)$).
        If $\pi$ is in the mediocre range, one has $A_{\frakq}(\lambda) \isom \pi$.
    \end{cor}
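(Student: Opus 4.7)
The plan is to reduce the claim to the Barbasch--Vogan parametrization by showing that $A_\frakq(\lambda)$ and $\pi$ share the same primitive ideal and asymptotic support. The first step is to verify that $\frakq'$ is itself holomorphic: in the case $\nu_{j-1} \subset \nu_j$, the positions $i < j-1$ inherit $q_i = 0$ from $\frakq$, every position $i \geq j$ of $\frakq'$ has $p_i = 0$ (the new $(0,p_{j-1})$ by construction and the original tail by holomorphicity of $\frakq$), and the only position possibly carrying both $p$ and $q$ nonzero is the new middle $j(\frakq') = j - 1$. The case $\nu_{j+1} \subset \nu_j$ is symmetric with $j(\frakq') = j + 1$. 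Consequently $\pi$ is a unitary lowest weight representation whenever nonzero, by \cite[Lemma 1.7]{1987_Adams}, and Lemma \ref{lemma_nonvanishing} applies to $\pi$.

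Next I compute $\Ann(\pi)$ and $\Ann(A_\frakq(\lambda))$ via Lemma \ref{lemma_nonvanishing} and compare. The crux is that in the case $\nu_{j-1} \subset \nu_j$, the data feeding the tableau recipe is invariant under the passage from $(\frakq,\lambda)$ to $(\frakq',\mu)$: the middle segment $\nu_j$ is unchanged; the multiset $\nu_{<j} \sqcup \nu_{>j}$ coincides with $\nu'_{<j-1} \sqcup \nu'_{>j-1}$, since we merely move $\nu_{j-1}$ from the left block to the right block; and the key quantity $q_j - \#(\nu_j \cap \nu_{<j})$ equals $(q_j - p_{j-1}) - \#(\nu_j \cap \nu_{<j-1})$, using the identity $\#(\nu_j \cap \nu_{j-1}) = \#\nu_{j-1} = p_{j-1}$ from $\nu_{j-1} \subset \nu_j$. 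Therefore the integers $f$, $g$, $m$, $i_0$ in the first/second-column recipe all agree, so $\Ann(A_\frakq(\lambda)) = \Ann(\pi)$; in particular the nonvanishing criterion of Lemma \ref{lemma_nonvanishing} is equivalent for the two data, forcing $\pi \neq 0$. The case $\nu_{j+1} \subset \nu_j$ is treated dually.

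For the asymptotic supports, since both $A_\frakq(\lambda)$ and $\pi$ are unitary lowest weight, Corollary \ref{signed_tableau_lt_wt_rep} tells us that their signed tableaux are determined by their shape, and this common shape is the shape of the $\nu$-antitableau $\Ann$, which already agrees by the previous step. Hence $\mathrm{AS}(A_\frakq(\lambda)) = \mathrm{AS}(\pi)$, and the Barbasch--Vogan classification yields the desired isomorphism $A_\frakq(\lambda) \isom \pi$. The main obstacle is the bookkeeping in the comparison of the tableau formulas: one must treat the edge case $q_j = 0$ separately, falling back on Lemma \ref{q_j=0} with its own explicit recipe, and verify the implicit hypothesis that $\nu'_{<j'} \sqcup \nu'_{>j'}$ is multiplicity-free for $\pi$, which again follows from $\nu_{j-1} \subset \nu_j$ together with the multiplicity-freeness of $\nu_{<j}$ and $\nu_{>j}$ guaranteed by $A_\frakq(\lambda) \neq 0$.
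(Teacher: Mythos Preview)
Your approach is correct and coincides with the paper's own proof, which simply reads ``The statement follows from Lemma~\ref{q_j=0} and Lemma~\ref{lemma_nonvanishing} by the explicit computation of tableaux.'' You have expanded this into the concrete bookkeeping: the observation that the middle segment $\nu_j$ is unchanged, that $\nu'_{<j'}\sqcup\nu'_{>j'}$ equals $\nu_{<j}\sqcup\nu_{>j}$ as a multiset, and that the crucial quantity $q_j-\#(\nu_j\cap\nu_{<j})$ is preserved under the swap---hence the parameters $f,g,m,i_0$ in the column recipe of Lemma~\ref{lemma_nonvanishing} agree. Your use of Corollary~\ref{signed_tableau_lt_wt_rep} to match asymptotic supports (shape determines the signed tableau for lowest weight modules) is exactly the intended mechanism, and your flagging of the boundary case where one side has $q_{j'}=0$ (forcing Lemma~\ref{q_j=0} rather than Lemma~\ref{lemma_nonvanishing}) is apt: note that in the $\nu_{j-1}\subset\nu_j$ direction it is $\pi$ that may fall into this case (since $q'_{j'}=q_j-p_{j-1}$), while in the $\nu_{j+1}\subset\nu_j$ direction it is the original that may have $q_j=0$. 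One small caution: your last sentence asserts that the multiplicity-freeness of $\nu'_{<j'}\sqcup\nu'_{>j'}$ follows from that of $\nu_{<j}$ and $\nu_{>j}$ separately, but really you are using that this multiset literally equals $\nu_{<j}\sqcup\nu_{>j}$, so whatever multiplicity structure holds for the original transfers verbatim; the individual pieces $\nu'_{<j'}$ and $\nu'_{>j'}$ need not be checked beyond that.
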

    \begin{proof}
        The statement follows from Lemma \ref{q_j=0} and Lemma \ref{lemma_nonvanishing} by the explicit computation of tableaux.
    \end{proof}

    \begin{rem}
        One can prove Corollary \ref{cor_transform} and Corollary \ref{cor_transform_2} by the induction in stages of $A_\frakq(\lambda)$ (cf.~\cite[\S 5.6]{C-Huang_2024} and \cite[Lemma 3.9]{Trapa_2001}).
        Note that Corollary \ref{cor_transform_2} is a special case of \cite[Proposition 4.9]{C-Huang_2024}.
    \end{rem}

\section{\texorpdfstring{$A$}{}-parameters and main theorem}
    We recall the M\oe glin-Renard's description of $A$-parameters in terms of cohomological inductions.
    We then state the main theorem of this paper.
    
\subsection{\texorpdfstring{$A$}{}-parameters}
    The $A$-parameters $\psi$ can be viewed as a formal sum
    \[
    \psi =  \bigoplus_{i=1}^r \chi_{t_i,s_i} \otimes S_{a_i},
    \]
    where $\chi_{t,s}$ is the character of $\bbC^\times$ defined by $z \mapsto (z/\overline{z})^{t/2}(z\overline{z})^{s/2}$, $S_m$ is the irreducible representation of $\SL_2(\bbC)$ with dimension $m$, and the triplets $(t,s,a)$ run over multisets on $(t,s,a) \in \bbZ \times \sqrt{-1}\,\bbR \times \bbZ_{>0}$.
    When $s=0$, we denote $\chi_t = \chi_{t,s}$.
    Note that this definition of $\chi_t$ differs slightly from that of \cite{Ichino_2022} but is the same as \cite{MR_consequence}.
    For an $A$-parameter $\psi = \bigoplus_i \chi_{t_i,s_i} \otimes S_{a_i}$, we say that $\psi$ is good if
    $(t_i+a_i+N)/2 \in \bbZ$ and $s_i = 0$ for any $i$. 
    Associated with an A-parameter $\psi$, we obtain the component group $\frakS_\psi$.
    It is isomorphic to a free $\bbZ/2\bbZ$-module
    \[
    \frakS_\psi = (\bbZ/2\bbZ)e_1 \oplus \cdots \oplus (\bbZ/2\bbZ)e_r.
    \]
    Let $\Pi(\psi)$ be the $A$-packet of $\psi$, i.e., the set of semisimple representations of $G$ satisfying the standard and twisted endoscopic character relations (cf.~\cite[\S 1.6]{AGIKMS_LIR_2024} and \cite[(1.6.1)]{KMSW_endoscopic_classification_unitary}).

    \begin{rem}
        In the usual definition, an $A$-parameter is a homomorphism $\psi_\bbR$ from the real Weil group $W_\bbR$ to the $L$-group of the unitary group.
        By the base change $\psi_\bbR|_{\bbC^\times}$, the parameter $\psi_\bbR$ can be identified with conjugate-selfdual parameters.
        For details, see \cite[Theorem 8.1]{2011_Gan_Gross_Prasad}.
    \end{rem}

\subsection{M{\oe}glin-Renard's construction of \texorpdfstring{$A$}{}-packets}

    Take a good $A$-parameter
    \[
    \psi = \bigoplus_{i=1}^r \chi_{t_i} \otimes S_{a_i}.
    \]
    We define the infinitesimal character $\chi_\psi$ of $\psi$ by the multiset
    \begin{align}\label{inf_char_A_param}
    \bigsqcup_{i=1}^r \left\{\frac{t_i+a_i-1}{2}, \frac{t_i+a_i-3}{2}, \ldots, \frac{t_i-a_i+1}{2}\right\} = \bigsqcup_{i=1}^r \left[\frac{t_i-a_i+1}{2}, \frac{t_i+a_i-1}{2}\right].
    \end{align}
Then, all the representations in $\Pi(\psi)$ have the same infinitesimal character with the Harish-Chandra parameter (\ref{inf_char_A_param}).
Following \cite[Th\'eor\`eme 1.1]{MR_consequence}, we describe the representations in $\Pi(\psi)$.
    Put
    \[
    \calD(\psi) = \left\{(p_i,q_i)_{i=1,\ldots,r} \in (\bbZ_{\geq 0})^2 \,\middle|\, \text{$p_i+q_i=a_i \neq 0$ for any $i$ and $\sum_{i=1}^r p_i = p, \sum_{i=1}^r q_i = q$}\right\}.
    \]
For $\underline{d} \in \calD(\psi)$, set
\[
x_{\underline{d}} = (\underbrace{r, \ldots, r}_{p_1}, \ldots, \underbrace{1, \ldots, 1}_{p_r},\underbrace{r, \ldots, r}_{q_1}, \ldots, \underbrace{1, \ldots, 1}_{q_r})
\]
and
\[
\lambda_{\underline{d}} = (\underbrace{\lambda_1, \ldots, \lambda_1}_{p_1}, \ldots, \underbrace{\lambda_r, \ldots, \lambda_r}_{p_r},\underbrace{\lambda_1, \ldots, \lambda_1}_{q_1}, \ldots, \underbrace{\lambda_r, \ldots, \lambda_r}_{q_r})
\]
where $\lambda_i = (t_i+a_i-N)/2+a_{<i}$ and $a_{<i} = \sum_{j<i}a_j$.
Note that there is a typo in \cite[(4.2)]{MR_consequence}: $(t_i+a_i-N)/2-a_{<i}$ should be $(t_i+a_i-N)/2+a_{<i}$.
We then consider the cohomological induction
\begin{align}\label{def_A_d_psi}
\mathscr{A}_{\underline{d}}(\psi) = A_{\frakq(x_{\underline{d}})}(\lambda_{\underline{d}})
\end{align}
and a character $\vep_{\underline{d}}$ on $\frakS_\psi$ defined by
\[
\vep_{\underline{d}}(e_i)= (-1)^{p_ia_{<i}+q_i(a_{<i}+1)+a_i(a_i-1)/2}
\]
for any $e_i \in \frakS_\psi$.
The following is proved in \cite[Th\'{e}or\`{e}me 1.1]{MR_consequence}.
    We choose the same Whittaker datum $\frakw_1$ as \cite{MR_consequence} (cf.~\cite[Appendix A]{Atobe_2020_nonvanishing_theta}).

\begin{thm}
    Let $\psi$ be a good A-parameter with $\psi = \bigoplus_{i=1}^r \chi_{t_i} \otimes S_{a_i}$.
    Suppose that $t_1\geq \cdots \geq t_r$ and $a_i \geq a_{i+1}$ if $t_i = t_{i+1}$.
    We then have
    \[
    \Pi(\psi) = \{\mathscr{A}_{\underline{d}}(\psi) \mid \underline{d} \in \calD(\psi)\}.
    \]
    The character of $\frakS_\psi$ associated with $\mathscr{A}_{\underline{d}}(\psi)$ is equal to $\vep_{\underline{d}}$.
    Moreover, the multiplicity one holds in $\Pi(\psi)$.
\end{thm}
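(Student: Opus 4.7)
The statement is Mœglin–Renard's Théorème 1.1 in \cite{MR_consequence}, so my plan is to outline their strategy rather than reprove it from scratch. The argument has three main phases. First, I would verify that each $\mathscr{A}_{\underline{d}}(\psi)$ carries infinitesimal character $\chi_\psi$ and lies in the weakly fair range. Using $\lambda_i = (t_i+a_i-N)/2 + a_{<i}$, a direct computation shows that the segments $\nu_i = \nu_i(\underline{d}, \lambda_{\underline{d}})$ from Section 3.2 coincide with $[(t_i-a_i+1)/2,(t_i+a_i-1)/2]$, so $\bigsqcup_i \nu_i$ reproduces (\ref{inf_char_A_param}). The ordering hypothesis $t_1 \geq \cdots \geq t_r$, together with the tie-breaking rule on $a_i$ when $t_i = t_{i+1}$, guarantees that the means of the $\nu_i$ are non-increasing in $i$, placing $\mathscr{A}_{\underline{d}}(\psi)$ in the weakly fair range and hence making it unitarizable (or zero).

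The technical core is the verification of the standard and twisted endoscopic character relations that define $\Pi(\psi)$. M{\oe}glin--Renard establish these via theta correspondence: they inductively realize $\mathscr{A}_{\underline{d}}(\psi)$ as a theta lift of a representation on a smaller unitary group whose character identities are already known, and the explicit choice of Whittaker datum $\frakw_1$ is dictated by compatibility with Atobe's normalization of the local theta lift (cf.~\cite[Appendix A]{Atobe_2020_nonvanishing_theta}). This step, which requires meticulous bookkeeping to match the two constructions at the level of distributional characters, is the one I would expect to dominate the work.

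The third phase computes $\vep_{\underline{d}}$ as a by-product of the same theta-lift realization: the combinatorial sign $(-1)^{p_i a_{<i}+q_i(a_{<i}+1)+a_i(a_i-1)/2}$ is tracked through the transfer of component-group characters under the see-saw, with the factor $(-1)^{a_i(a_i-1)/2}$ coming from the parity of the half-integer $\rho$-shifts intrinsic to cohomological induction, and the remaining factors recording the position of $(p_i,q_i)$ inside $\underline{d}$ relative to a base point.

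Finally, multiplicity one follows from the Barbasch--Vogan parametrization reviewed in Section 2: distinct $\underline{d} \in \calD(\psi)$ produce different pairs $(\Ann, \mathrm{AS})$, the annihilator being read off from the $\nu$-antitableau attached to $\lambda_{\underline{d}}$ by Trapa's algorithm (Section 3.3) and the asymptotic support being recovered combinatorially from $\underline{d}$ via the same algorithm, so the nonzero representations $\mathscr{A}_{\underline{d}}(\psi)$ are pairwise non-isomorphic and each appears in $\Pi(\psi)$ exactly once.
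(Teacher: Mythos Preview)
The paper does not supply its own proof of this theorem; it is quoted as \cite[Th\'eor\`eme 1.1]{MR_consequence} and used as a black box throughout. You correctly recognize this and instead sketch what you take to be M{\oe}glin--Renard's argument, so there is nothing in the present paper to compare your outline against.

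That said, one point in your sketch is shaky on its own terms. In the last paragraph you assert that multiplicity one follows from the Barbasch--Vogan parametrization because distinct $\underline{d} \in \calD(\psi)$ produce distinct pairs $(\Ann,\mathrm{AS})$ via Trapa's algorithm. Nothing in Sections~2--3 of the paper establishes such injectivity in general: some $\mathscr{A}_{\underline{d}}(\psi)$ vanish, and for the nonzero ones the tableau computation does not immediately show that different $\underline{d}$ never collide (indeed, Corollaries \ref{cor_transform} and \ref{cor_transform_2} exhibit situations where different parabolic data give isomorphic modules). Multiplicity one in \cite{MR_consequence} is obtained as part of the endoscopic character analysis, not as a by-product of the tableau combinatorics in this paper.
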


\begin{rem}
    The correspondence $\scrA_{\underline{d}}(\psi)$ to $\vep_{\underline{d}}$ depends on the choice of Whittaker datum.
    We may explicitly calculate the dependence.
    See \cite[Theorem 1.6.1]{KMSW_endoscopic_classification_unitary} for details.
    \end{rem}
    \begin{rem}
    The statement does not imply that the cohomological induction $\mathscr{A}_{\underline{d}}(\psi)$ is non-zero.
    Trapa \cite{Trapa_2001} gives an algorithm to determine whether cohomological inductions of $G$ are zero or not.
    Recently, Huang \cite{C-Huang_2024} and Chengyu \cite{Chengyu_2024} independently considered the nonvanishing of cohomologically induced representations of real unitary groups with Chengyu treating the ``nice" case and Huang the general case.
\end{rem}
\begin{rem}
    For general $\psi$, the $A$-packet can be described as the parabolic induction from the representations of certain good $A$-parameter $\psi_0 \subset \psi$.
    Then, any representation in $\Pi(\psi)$ has the infinitesimal character $\chi_\psi$, where $\chi_\psi$ is the infinitesimal character defined as (\ref{inf_char_A_param}).
\end{rem}

\subsection{Main theorem}

    For a good $A$-parameter $\psi = \bigoplus_{i=1}^r \chi_{t_i}\otimes S_{a_i}$, let $j = j(\psi)$ be the minimal number $i$ so that $a_{\leq i} = \sum_{\ell=1}^i a_\ell \geq p$.
    By Lemma \ref{lemma_cond_d} below, it suffices to consider $\underline{d_0}$ defined by
    \[
    \underline{d_0} = \underline{d_0}(\psi) = \{(a_1,0),\ldots,(a_{j-1},0), (p_j,q_j), (0,a_{j+1}),\ldots, (0,a_{r})\}
    \]
    where $p_j = p - a_{<j}, q_j = q - a_{>j}$.
    Let $\nu_i$ be the segment $[(t_i-a_i+1)/2,(t_i+a_i-1)/2]$ and 
    \[
    \nu_{<i} = \bigsqcup_{k < i} \nu_k, \qquad \nu_{>i} = \bigsqcup_{i < k} \nu_k.
    \]
    Here we consider the union as multisets.
    The multisets $\nu_{\leq i}$ and $\nu_{\geq i}$ are defined similarly.

    For a $\triangle_c^+$-dominant integral weight $\lambda = (\lambda_1,\ldots,\lambda_p, \lambda_{p+1},\ldots,\lambda_N)$, put
    \[
    p' = p'(\lambda) = \#\{i \mid \lambda_p = \lambda_i, 1 \leq i \leq p\}, \qquad q' = q'(\lambda) = \#\{i \mid \lambda_i = \lambda_{p+1}, p+1 \leq i \leq N\}.
    \]
    Set
    \[
    P = P(\lambda) = \{\lambda_{p}-(N-1)/2,\lambda_{p-1}-(N-1)/2+1\ldots,\lambda_1+(p-q-1)/2\}
    \]
    and
    \[
    Q = Q(\lambda) = \{\lambda_{N}+(p-q+1)/2, \lambda_{N-1}+(p-q+3)/2,\ldots, \lambda_{p+1}+(N-1)/2\}.
    \]
    The multiset $P \sqcup Q$ can be identified with the infinitesimal character of the lowest weight representation $\pi_\lambda$.
    We define the segments $P'$ and $Q'$ by
    \[
    P' = [\lambda_p-(N-1)/2, \lambda_p-(N+1)/2+p'], \qquad Q' = [\lambda_{p+1}+(N+1)/2-q',\lambda_{p+1}+(N-1)/2].
    \]
    Set $I = P' \cap Q'$.
    
    \begin{lem}\label{lemma}
        Let $\lambda = (\lambda_1,\ldots,\lambda_{N})$ be a $\triangle_c^+$-dominant integral weight and $\pi_\lambda$ be the irreducible lowest weight representation of lowest $K$-type $\lambda$.
        If $\pi_\lambda \in \Pi(\psi)$, the parameter $\psi$ is good and $\chi_\psi = \chi_{\pi_\lambda}$.
        Moreover, if $\scrA_{\underline{d}}(\psi) \isom \pi_\lambda$, we have $\underline{d} = \underline{d_0}$.
    \end{lem}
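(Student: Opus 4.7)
The plan is to handle the two claims in turn. First I would use the fact that every member of $\Pi(\psi)$ has infinitesimal character $\chi_\psi$, whose Harish-Chandra parameter is the multiset built from the segments $\nu_i$ together with the real shifts coming from the parameters $s_i$ of each $\chi_{t_i,s_i}$. Since $\chi_{\pi_\lambda}$ is integral --- by the closed form recalled in the preliminaries it lies in $\bbZ^N + (N-1)/2$, as $\lambda \in \bbZ^N$ --- matching the two multisets forces $s_i = 0$ for all $i$ (any nonzero shift $s_i/2$ destroys integrality) and $(t_i + a_i + N)/2 \in \bbZ$, which is precisely the goodness condition, while simultaneously yielding $\chi_\psi = \chi_{\pi_\lambda}$.

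For the rigidity assertion, suppose $\scrA_{\underline{d}}(\psi) = A_{\frakq(x_{\underline{d}})}(\lambda_{\underline{d}}) \isom \pi_\lambda$. Then this cohomologically induced module is a lowest weight representation, and I would invoke Lemma \ref{lemma_cond_d} (the converse to \cite[Lemma 1.7]{1987_Adams}) to conclude that the associated parabolic $\frakq(x_{\underline{d}})$ is holomorphic. Writing $\underline{d} = \{(p_i, q_i)\}_{1 \leq i \leq r}$, this means there exists an index $j'$ with $q_i = 0$ for all $i < j'$ and $p_i = 0$ for all $i > j'$. Combining $p_i + q_i = a_i$, $\sum_i p_i = p$ and $\sum_i q_i = q$ forces $p_{j'} = p - a_{<j'}$ and $q_{j'} = q - a_{>j'}$, both nonnegative; equivalently $a_{<j'} \leq p \leq a_{\leq j'}$. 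This pins $j'$ down to $j(\psi)$ and $\underline{d}$ to $\underline{d_0}$.

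The main obstacle is therefore the converse statement packaged in Lemma \ref{lemma_cond_d}. The natural approach is via the $K$-type formula (\ref{K-type_formula}): a lowest weight representation has all its $K$-types in the cone $\lambda_0 + \sum_{\alpha \in \triangle_n^+} \bbZ_{\geq 0}\,\alpha$ above its lowest $K$-type $\lambda_0$, whereas $K$-types of $A_\frakq(\mu)$ in the mediocre range are translates of $\mu + 2\rho(\fraku \cap \frakp_\bbC)$ by nonnegative integer combinations of roots in $\triangle(\fraku \cap \frakp_\bbC)$. If $\frakq$ were not holomorphic, $\triangle(\fraku \cap \frakp_\bbC)$ would contain a root in $\triangle_n^- = \triangle(\frakp_-)$, producing $K$-types outside that cone. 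The delicate point is that (\ref{K-type_formula}) is an alternating sum over $W^1$, so one must exhibit a concrete nonvanishing $K$-type to derive the contradiction; I would circumvent this by first translating to the good range, where positivity of multiplicities is automatic, and then transferring back via Zuckerman's translation principle.
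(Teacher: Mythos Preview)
Your overall reduction is correct and matches the paper: both claims are packaged in Lemma~\ref{lemma_cond_d}, and your combinatorial deduction of $\underline{d} = \underline{d_0}$ from the holomorphicity of $\frakq(x_{\underline{d}})$ is fine. The substantive difference is in how you propose to establish that holomorphicity.

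The paper does not argue via $K$-types. It uses the asymptotic support $\mathrm{AS}(\scrA_{\underline{d}}(\psi))$, which for a weakly fair $A_\frakq(\lambda)$ is a single nilpotent orbit encoded by a $(p,q)$-signed tableau (\S\ref{trapa_algorithm}). Corollary~\ref{signed_tableau_lt_wt_rep} says that for a unitary lowest weight representation this signed tableau has at most two columns, and every two-box row has pattern $+-$. One then checks directly from the inductive construction of the signed tableau that a non-holomorphic $\underline{d}$ forces either a row of length $\geq 3$ or a $-+$ row, contradicting Corollary~\ref{signed_tableau_lt_wt_rep}. This is short and requires no translation.

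Your $K$-type route is genuinely different, and the difficulty you flag is real: the alternating sum in \eqref{K-type_formula} prevents you from simply reading off a forbidden $K$-type in the weakly fair range. The gap is in the translation step. You have not said what feature of ``being lowest weight'' is stable under translation to the good range and back. The natural translation-stable invariant here is precisely the asymptotic support (this is exactly how the paper justifies that $\mathrm{AS}(A_\frakq(\lambda))$ is a single orbit even outside the good range, cf.\ \S\ref{trapa_algorithm}); invoking it collapses your argument into the paper's. If instead you mean to track actual $K$-type multiplicities through a translation to a possibly singular infinitesimal character, that is delicate (such translations can annihilate $K$-types) and you have not supplied the control needed. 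As written the proposal does not close this gap, whereas the signed-tableau argument does so in one stroke.
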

    \begin{proof}
        The lemma will be proved in Lemma \ref{lemma_cond_d}.
    \end{proof}

    The following is the main theorem of this paper.
    Note that the nonvanishing condition $\scrA_{\underline{d_0}}(\psi)$ is determined in Lemma \ref{lemma_nonvanishing} (see also Corollary \ref{psi_to_lambda}).
    
    \begin{thm}\label{main}
        Let $\lambda = (\lambda_1,\ldots,\lambda_{N})$ be a $\triangle_c^+$-dominant integral weight and $\pi_\lambda$ be the irreducible lowest weight representation of lowest $K$-type $\lambda$.
        Let $\psi$ be a good $A$-parameter with $\chi_\psi = \chi_{\pi_\lambda}$ such that $\scrA_{\underline{d_0}}(\psi)$ is nonzero.
        We then obtain the following:
        \begin{enumerate}
            \item If $N- p' \leq \lambda_{p}-\lambda_{p+1} < N-q'$, the packet $\Pi(\psi)$ contains $\pi_\lambda$ if and only if $[\lambda_p-(N-1)/2, \lambda_{p+1}+(N-1)/2] \subset \nu_j \subset P'$.
            \item If $N-q' \leq \lambda_{p}-\lambda_{p+1} < N-p'$, the packet $\Pi(\psi)$ contains $\pi_\lambda$ if and only if either
            \begin{itemize}
                \item $\nu_{\leq j} = P$, or
                \item $[\lambda_{p}-(N-1)/2, \lambda_{p+1}+(N-1)/2] \subset \nu_j \subset Q'$.
            \end{itemize}
            \item If $N- p', N-q' \leq \lambda_{p}-\lambda_{p+1}$, the packet $\Pi(\psi)$ contains $\pi_\lambda$ if and only if either
            \begin{itemize}
                \item $P \subset \nu_{\leq j} \subset P \sqcup I$, or
                \item $I \subset \nu_{j} \subset Q'$.
            \end{itemize}
            \item If $\lambda_{p}-\lambda_{p+1} < N- p', N-q'$, the packet $\Pi(\psi)$ contains $\pi_\lambda$ if and only if $[\lambda_p-(N-1)/2, \lambda_{p+1}+(N-1)/2] = \nu_j$.
        \end{enumerate}
    \end{thm}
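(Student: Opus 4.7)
The plan is to reduce, via the Barbasch--Vogan parametrization, to an explicit comparison of the two-column tableaux attached to $\scrA_{\underline{d_0}}(\psi)$ and $\pi_\lambda$, and then to carry out this comparison in each of the four regimes using Trapa's algorithm. By Lemma \ref{lemma}, the only $\underline{d} \in \calD(\psi)$ that can yield $\pi_\lambda$ is $\underline{d_0}$, so $\pi_\lambda \in \Pi(\psi)$ is equivalent to $\scrA_{\underline{d_0}}(\psi) \isom \pi_\lambda$. Because $\frakq_{\underline{d_0}}$ is holomorphic, when nonzero $\scrA_{\underline{d_0}}(\psi)$ is automatically a unitary lowest weight representation, and by Corollary \ref{signed_tableau_lt_wt_rep} the $(p,q)$-signed tableau of any such representation is determined by its shape. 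Hence Barbasch--Vogan reduces the question to the single identity $\Ann(\scrA_{\underline{d_0}}(\psi)) = \Ann(\pi_\lambda)$; once these coincide, the matching of shapes makes the signed tableaux agree automatically.

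Both tableaux have explicit descriptions. For $\pi_\lambda$, Lemmas \ref{A_q_lt_wt_rep} and \ref{q_j=0} realise it as a cohomological induction with holomorphic $\frakq$ and give the first two columns of $\Ann(\pi_\lambda)$ as an arrangement of $P \sqcup Q$ governed by $P'$, $Q'$ and $I = P' \cap Q'$; the four regimes in the theorem correspond to the qualitatively different shapes this arrangement can take. For $\scrA_{\underline{d_0}}(\psi)$, Lemma \ref{lemma_nonvanishing} gives the two columns explicitly in terms of $\nu_{<j}, \nu_j, \nu_{>j}$ and $p_j, q_j$. Corollaries \ref{cor_transform} and \ref{cor_transform_2} let me split or merge linked segments inside $\nu_{<j}$ and $\nu_{>j}$ without changing $\scrA_{\underline{d_0}}(\psi)$, which should normalise the tableau into a form directly comparable to $\Ann(\pi_\lambda)$.

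In each regime I would then read off the conditions on $\nu_j$ that make the two two-column tableaux coincide. In case (4), the shape of $\Ann(\pi_\lambda)$ is a pure rectangle, forcing $\nu_j = [\lambda_p-(N-1)/2,\lambda_{p+1}+(N-1)/2]$. In cases (1) and (2) only one of $P', Q'$ bounds the allowed range of $\nu_j$, yielding the chain $[\lambda_p-(N-1)/2,\lambda_{p+1}+(N-1)/2] \subset \nu_j \subset P'$ or $\subset Q'$, while the alternative $\nu_{\leq j} = P$ in case (2) corresponds precisely to the holomorphic construction in Lemma \ref{A_q_lt_wt_rep}(2). In case (3) both alternatives survive: $P \subset \nu_{\leq j} \subset P \sqcup I$ places the ``central'' entries of the tableau inside $\nu_{\leq j}$, whereas $I \subset \nu_j \subset Q'$ places them in $\nu_j$ itself.

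The main obstacle is the bookkeeping in this case analysis: the output of Lemma \ref{lemma_nonvanishing} branches on inequalities among the auxiliary integers $f, g, m, i_0$ and on the relative sizes of $p_j, q_j, \#(\nu_j \cap \nu_{<j}), \#(\nu_j \cap \nu_{>j})$. The most delicate subcases are (2) and (3), where the theorem gives two disjoint sufficient conditions; for each one must identify the branch of Trapa's algorithm responsible, and for necessity must rule out any other $\nu_j$ producing the correct tableau. I expect Corollary \ref{cor_transform_2} to be the key technical tool, allowing transfer of segment data between $\nu_j$ and $\nu_{\leq j}$ while preserving the isomorphism class of $\scrA_{\underline{d_0}}(\psi)$, and thereby explaining why two apparently different descriptions in case (3) both yield $\pi_\lambda$.
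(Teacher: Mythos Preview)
Your plan is correct and matches the paper's approach: reduce to $\underline{d_0}$ via Lemma \ref{lemma}, use Barbasch--Vogan together with Corollary \ref{signed_tableau_lt_wt_rep} to reduce to the single identity $\Ann(\scrA_{\underline{d_0}}(\psi)) = \Ann(\pi_\lambda)$, compute both two-column tableaux from Lemmas \ref{q_j=0}, \ref{A_q_lt_wt_rep}, \ref{lemma_nonvanishing}, and handle the ``if'' direction with Corollaries \ref{cor_transform}--\ref{cor_transform_2}. The paper organises the necessity argument by first splitting each regime into $q_j=0$ (dispatched by Lemma \ref{q_j=0}) versus $q_j\neq 0$, and by isolating beforehand the constraints forced by the weakly fair condition (e.g.\ $I\subset\nu_j$ when $q_j\neq 0$, and $\nu_{<j}\cap\nu_{>j}\neq\emptyset \Rightarrow q_j=0$); your description of case (4) as a ``pure rectangle'' is not accurate, but the entry-by-entry tableau comparison you propose is exactly what is carried out there.
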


    Our proof, provided in section \ref{proof_main}, is based on the explicit computation of $K$-types and the associated tableaux of $\scrA_{\underline{d}}(\psi)$.
    More precisely, the if part follows Lemma \ref{A_q_lt_wt_rep}, Corollary \ref{cor_transform} and Corollary \ref{cor_transform_2}.
    For the only if part, we divide the cases into $q_j = 0$ or $q_j \neq 0$.
    When $q_j=0$, the statement follows from Lemma \ref{q_j=0}.
    When $q_j \neq 0$, we calculate the associated tableau $\Ann(\scrA_{\underline{d}}(\psi))$ in Lemma \ref{lemma_N-p'<a-b<N-q'}, \ref{lemma_N-q'<a-b<n-p'}, and \ref{lemma_a-b<N-p',N-q'_2}.
    Then, the theorem follows.

    As a consequence of Theorem \ref{main}, we can determine the lowest $K$-type of the lowest weight representation in $\Pi(\psi)$.
    \begin{thm}\label{psi_to_lambda}
        Let $\psi = \bigoplus_{i=1}^r \chi_{t_i} \otimes S_{a_i}$ be a good $A$-parameter.
        The representation $\Pi(\psi)$ contains a non-zero unitary lowest weight representation if and only if both
        \begin{itemize}
            \item $\nu_{<j}$ and $\nu_{>j}$ are multiplicity free, and
            \item $\#(\nu_j \cap \nu_{>j}) \leq p_j$ and $\#(\nu_j \cap \nu_{<j}) \leq q_j$.
        \end{itemize}
        When $\Pi(\psi)$ contains a nonzero unitary lowest weight representation $\pi$ in $\Pi(\psi)$, the lowest $K$-type $\lambda$ of $\pi$ is given as follows:
        \begin{enumerate}
            \item When $q_j = 0$, the lowest $K$-type $\lambda$ of $\pi$ satisfies $P(\lambda) = \nu_{\leq j}$ and $Q(\lambda) = \nu_{>j}$.
            \item When $p_j = \#(\nu_j \cap \nu_{>j})$ and $q_j \neq 0$, the lowest $K$-type $\lambda$ of $\pi$ satisfies $P(\lambda) = \nu_{<j} \sqcup (\nu_{j} \cap \nu_{>j})$ and $Q(\lambda) = \nu_{\geq j} \setminus (\nu_{j} \cap \nu_{>j})$.
            \item When $q_j = \#(\nu_j \cap \nu_{<j}) \neq 0$, the lowest $K$-type $\lambda$ of $\pi$ satisfies $P(\lambda) = \nu_{\leq j}\setminus (\nu_{<j} \cap \nu_j) = \{\sigma_1, \ldots,\sigma_p\}$ and $Q(\lambda) = (\nu_j \cap \nu_{<j}) \sqcup \nu_{>j}$.
            \item When $p_j \neq \#(\nu_j \cap \nu_{>j})$ and $q_j \neq \#(\nu_j \cap \nu_{<j})$, set $\nu_{<j} \sqcup \nu_{>j} = \{\sigma_1,\ldots,\sigma_{N-\#(\nu_j)}\}$.
            Let $i_0$ be the minimal integer such that $1 \leq i_0 \leq \#(\nu_j)$ and $\#(\nu_j) - i_0 + 1 + \#\{x \in\nu_{<j} \sqcup \nu_{>j} \mid x > \nu_{j,i_0}\}  = p$.
            Then, the lowest $K$-type $\lambda = (\lambda_1,\ldots,\lambda_N)$ of $\pi$ is given by
            \begin{align*}
            \lambda_i
            =
            \begin{cases}
                \sigma_i -(p-q+1)/2+i & \text{if $i < p-\#(\nu_j)+i_0$.}\\
                \nu_{j, 1} + (N+1)/2 -\#(\nu_j) & \text{if $p-\#(\nu_j)+i_0 \leq i \leq p$.}\\
                \nu_{j,1} - (N-1)/2 &\text{if p+1 $\leq i \leq p+i_0-1$.}\\
                \sigma_{i-\#(\nu_j)}-(N+1)/2-p+i & \text{if $p + i_0 \leq i$.}
            \end{cases}
            \end{align*}
        \end{enumerate}
    \end{thm}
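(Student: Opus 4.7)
The plan is to derive Theorem \ref{psi_to_lambda} from Lemma \ref{lemma}, Lemma \ref{lemma_nonvanishing}, and Theorem \ref{main}, repackaging them from the ``$\psi$ given'' perspective. First, Lemma \ref{lemma} forces any unitary lowest weight representation in $\Pi(\psi)$ to be $\scrA_{\underline{d_0}}(\psi)$. Since $\underline{d_0}$ corresponds to a holomorphic $\theta$-stable parabolic subalgebra, \cite[Lemma 1.7]{1987_Adams} guarantees that $\scrA_{\underline{d_0}}(\psi)$ is a lowest weight representation whenever it is nonzero. Hence the existence question reduces to the nonvanishing criterion of Lemma \ref{lemma_nonvanishing} applied to $\underline{d_0}$, and this is precisely the conjunction of the two bulleted conditions. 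This settles the first half of the statement.

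For the explicit lowest $K$-type $\lambda$, the strategy is to identify $\scrA_{\underline{d_0}}(\psi)$ with some $\pi_\lambda$ by matching their Barbasch-Vogan invariants, using Lemma \ref{lemma_nonvanishing} to compute the annihilator tableau of the former and Lemma \ref{A_q_lt_wt_rep} together with Corollary \ref{signed_tableau_lt_wt_rep} to describe the latter. In case (1), $q_j = 0$, so Lemma \ref{q_j=0} applies directly; reading off the two columns of length $p$ and $q$ and matching $\chi_\psi = \sqcup_i \nu_i$ with $P(\lambda) \sqcup Q(\lambda)$ yields $P(\lambda) = \nu_{\leq j}$ and $Q(\lambda) = \nu_{>j}$.

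In cases (2) and (3), the hypothesis $p_j = \#(\nu_j \cap \nu_{>j})$ (respectively $q_j = \#(\nu_j \cap \nu_{<j}) \neq 0$) means that the maximal possible overlap between $\nu_j$ and the adjacent side is realized. The idea is to apply Corollary \ref{cor_transform_2}, together with Corollary \ref{cor_transform}, to redistribute the overlap entries of $\nu_j$ into $\nu_{>j}$ or $\nu_{<j}$, thereby producing an isomorphic cohomological induction with $q_j = 0$ or $p_j = 0$. Lemma \ref{q_j=0} (in its original form or its mirror obtained by swapping $p$ and $q$) then reads off $P(\lambda)$ and $Q(\lambda)$ from the transformed segments, giving $P(\lambda) = \nu_{<j} \sqcup (\nu_j \cap \nu_{>j})$ and $Q(\lambda) = \nu_{\geq j} \setminus (\nu_j \cap \nu_{>j})$ in case (2), and the symmetric assertion in case (3).

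Case (4) is the main obstacle. The explicit description in Lemma \ref{lemma_nonvanishing} shows that after Trapa's algorithm the annihilator tableau of $\scrA_{\underline{d_0}}(\psi)$ has precisely two columns: the first, of height $p$, contains the large $\sigma_i$'s from $\nu_{<j} \sqcup \nu_{>j}$ together with a tail $\nu_{j,m+i_0+1}, \ldots, \nu_{j,a_j}$ of $\nu_j$, while the second, of height $q$, contains the head $\nu_{j,1}, \ldots, \nu_{j,m+i_0}$ together with the remaining $\sigma_i$'s. The index $i_0$ in the statement is exactly the position at which $\nu_{j,1}$ enters the top $p$ entries when $\nu_j$ is interleaved with $\nu_{<j} \sqcup \nu_{>j}$ in decreasing order, and the minimality in its definition corresponds to the first time the column-height constraint is saturated. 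Reading the first column as $P(\lambda)$ and the second as $Q(\lambda)$, then inverting the shifts $(p-q\mp 1)/2$ appearing in the definitions of $P$ and $Q$, produces the piecewise formula for $\lambda_i$. The delicate step is verifying the $\triangle_c^+$-dominance conditions $\lambda_1 \geq \cdots \geq \lambda_p$ and $\lambda_{p+1} \geq \cdots \geq \lambda_N$ and continuity at the transition indices $p - \#(\nu_j) + i_0$ and $p + i_0$; these follow from the minimality of $i_0$ combined with the multiplicity-freeness of $\nu_{<j}$ and $\nu_{>j}$ established in the first part.
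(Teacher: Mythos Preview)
Your treatment of the nonvanishing statement and of cases (1)--(3) matches the paper's proof essentially verbatim: reduce to $\underline{d_0}$ via Lemma \ref{lemma}, apply Lemma \ref{lemma_nonvanishing} for nonvanishing, then use Lemma \ref{q_j=0} for (1) and Corollaries \ref{cor_transform}, \ref{cor_transform_2} together with Lemma \ref{A_q_lt_wt_rep} for (2) and (3).

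The divergence, and the gap, is in case (4). The paper does \emph{not} read $\lambda$ off the annihilator tableau directly. Instead it invokes Theorem \ref{main}: since $p_j \neq \#(\nu_j \cap \nu_{>j})$, $q_j \neq \#(\nu_j \cap \nu_{<j})$, and $q_j \neq 0$, the characterizations in cases (1)--(3) of Theorem \ref{main} are excluded, forcing $\lambda_p - \lambda_{p+1} < N - p'(\lambda),\, N - q'(\lambda)$ and hence $\nu_j = [\lambda_p - (N-1)/2,\, \lambda_{p+1} + (N-1)/2]$. The explicit formula then drops out of Lemma \ref{A_q_lt_wt_rep}(1), which writes $\pi_\lambda$ as an $A_{\frakq}(\mu)$ of exactly the shape appearing in Corollary \ref{cor_transform}.

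Your alternative route---compute the two columns of $\Ann(\scrA_{\underline{d_0}}(\psi))$ via Lemma \ref{lemma_nonvanishing} and declare them to be $P(\lambda)$ and $Q(\lambda)$---does not work as stated. The columns of the $\nu$-antitableau are \emph{not} $P(\lambda)$ and $Q(\lambda)$: already in Lemma \ref{q_j=0}, and more explicitly in the tableau computations of \S\ref{case_a-b<N-p_n-q}, the first column of $\Ann(\pi_\lambda)$ interleaves entries coming from both $P(\lambda)$ and $Q(\lambda)$ according to the index $i_0$ produced by Trapa's algorithm. Likewise, your assertion that the columns have heights $p$ and $q$ is not justified; the shape of the tableau depends on the overlaps, not just on the signature. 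Without the bridge supplied by Theorem \ref{main} (which pins down $\nu_j$ exactly and thereby identifies the presentation in Lemma \ref{A_q_lt_wt_rep}(1)), the inversion you sketch is not well-defined, and the dominance and transition checks you mention cannot be carried out from the information you have assembled.
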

    \begin{proof}
        To show the nonvanishing condition, it suffices to consider the case $\underline{d} = \underline{d_0}$ by Lemma \ref{lemma}.
        For $\scrA_{\underline{d_0}}(\psi)$, the nonvanishing condition is already proved in Lemma \ref{lemma_nonvanishing}.
        Suppose that $\scrA_{\underline{d_0}}(\psi)$ is nonzero.
        Then, if $q_j = 0$, the statement follows from Lemma \ref{q_j=0}.
        Under the conditions in (2) and (3), the statement follows from Lemma \ref{A_q_lt_wt_rep}, Corollary \ref{cor_transform} and Corollary \ref{cor_transform_2}.
        Under the conditions in (4), we have $\lambda_p - \lambda_{p+1} < N - p'(\lambda), N-q'(\lambda)$ by Theorem \ref{main}.
        In this case, such $i_0$ exists and the lowest $K$-type is given by Lemma \ref{A_q_lt_wt_rep}.
        We then have $\nu_j = [\lambda_{p}-(N-1)/2, \lambda_{p+1} + (N-1)/2]$ and $P(\lambda) = \{\nu_{j,i_0}, \nu_{j, i_0+1} \ldots,\nu_{j, \#(\nu_j)}\} \sqcup \{x \in \nu \setminus \nu_j \mid x>\nu_{j,i_0}\}$.
        The statement follows from Lemma \ref{A_q_lt_wt_rep} (1).
    \end{proof}

\section{Proof of main theorem}\label{proof_main}
    In this section, we prove the main theorem.
    The statement in Lemma \ref{lemma} will be proved in Lemma \ref{lemma_cond_d}.
    Theorem \ref{main} will be proved in \S \ref{case_N-p<a-b<N-q}, \ref{case_case_N-q<a-b<N-p}, \ref{case_N-p_N-q<a-b} and \ref{case_a-b<N-p_n-q}.

\subsection{Cohomological induction for holomorphic \texorpdfstring{$\theta$}{}-stable parabolic subalgebras}
    To show the main theorem, we prepare some lemmas for $A_\frakq(\lambda)$.
    The following is fundamental when we consider $\scrA_{\underline{d}}(\psi)$ that is isomorphic to a lowest weight representation.
    
    \begin{lem}\label{lemma_cond_d}
        Let $\psi = \bigoplus_{i=1}^r \chi_{t_i,s} \otimes S_{a_i}$ be an $A$-parameter.
        If $\Pi(\psi)$ contains an irreducible lowest weight representation $\pi$, the parameter $\psi$ is good and $\chi_\psi = \chi_\pi$.
        Moreover, if $\scrA_{\underline{d}}(\psi) \in \Pi(\psi)$ is nonzero and lowest weight, there exists $j$ such that $q_i = 0$ for any $i < j$ and $p_\ell = 0$ for any $\ell > j$, i.e., $\underline{d} = \underline{d_0}$ and $\frakq_{\underline{d}}$ is holomorphic.
    \end{lem}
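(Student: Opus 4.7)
The plan splits along the two assertions. First I would establish that $\psi$ is good with $\chi_\psi = \chi_\pi$, then that $\underline{d} = \underline{d_0}$ whenever $\scrA_{\underline{d}}(\psi)$ is a nonzero lowest weight representation.

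For goodness: an irreducible lowest weight representation $\pi = \pi_\lambda$ with $\lambda \in \bbZ^N$ has integral infinitesimal character, given explicitly in \S 2.3 as an element of $\bbZ + (N-1)/2$. Since all members of $\Pi(\psi)$ share the infinitesimal character $\chi_\psi$, we must have $\chi_\psi$ integral. But $\chi_\psi$ for a general $\psi = \bigoplus_i \chi_{t_i, s_i} \otimes S_{a_i}$ is the complex multiset obtained by allowing $s_i \in \sqrt{-1}\,\bbR$ in (\ref{inf_char_A_param}); integrality forces $s_i = 0$ for every $i$ and $t_i + a_i + N \equiv 0 \pmod 2$, which is precisely the definition of good, and then $\chi_\psi = \chi_\pi$ follows.

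For the second assertion, $\scrA_{\underline{d}}(\psi)$ is a cohomological induction in the weakly fair range, hence irreducible unitarizable by the bullet points in \S 3.2; combined with the lowest weight hypothesis it is a unitary lowest weight representation, and Corollary \ref{signed_tableau_lt_wt_rep} forces its signed tableau $\mathrm{AS}(\scrA_{\underline{d}}(\psi))$ to have at most two columns. I then trace the inductive construction of $\sqcup_k S_k$ from \S \ref{trapa_algorithm} by induction on $k$: the tableau $\sqcup_{i \le k} S_i$ remains within two columns precisely when no $q_i > 0$ (for $i \le k$) is followed by any $p_\ell > 0$. The critical step is to rule out the non-holomorphic case: once some $S_{i_1}$ with $q_{i_1} > 0$ attaches a minus box to a plus-ending row (opening a second column), a later $S_{i_2}$ with $p_{i_2} > 0$ must, under the top-to-bottom alternating-sign placement rule, extend that length-two row by a plus (the alternation $- \to +$ is allowed and the row-end is accessible at the top of the diagram), thereby creating a third column and contradicting Corollary \ref{signed_tableau_lt_wt_rep}. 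Once the holomorphic structure is in hand, the numerical constraints $\sum p_i = p$, $\sum q_i = q$, $p_i + q_i = a_i > 0$ force $j$ to be the smallest index with $a_{\le j} \ge p$ and $(p_j, q_j) = (p - a_{<j}, q - a_{>j})$, giving $\underline{d} = \underline{d_0}$.

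The main obstacle is the combinatorial bookkeeping in the signed tableau construction: the procedure places at most one box per row-end from top to bottom with alternating signs, so one must verify carefully that no alternative placement pattern allows a non-holomorphic $\underline{d}$ to squeeze by with only two columns. Once this combinatorial step is verified, both conclusions of the lemma follow immediately.
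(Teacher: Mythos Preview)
Your approach is essentially the paper's: integrality of $\chi_\pi$ forces $\psi$ to be good with $\chi_\psi=\chi_\pi$, and the signed tableau together with Corollary~\ref{signed_tableau_lt_wt_rep} pins down $\underline{d}=\underline{d_0}$. One refinement to note when you fill in the bookkeeping you flagged: the paper's combinatorial step records \emph{two} possible obstructions, namely a row of length $\geq 3$ \emph{or} a length-two row in the order $-,+$; your sketch produces only the former, but the latter is exactly what occurs when the minus box of $S_{i_1}$ lands in the first column (for instance when no plus-ending row is available at that stage), and a later $S_{i_2}$ with $p_{i_2}>0$ then appends a plus to it.
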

    \begin{proof}
        The statement (1) follows from the fact that the representations in $\Pi(\psi)$ have the infinitesimal character $\chi_\psi$.
        For (2), consider the signed tableau $\Ann(\scrA_{\underline{d}}(\psi))$.
        If there exist different integers $k$ and $\ell$ with $p_kq_k p_\ell q_\ell \neq 0$, the signed tableau $\Ann(\scrA_{\underline{d}}(\psi))$ satisfies either
        \begin{itemize}
            \item there exists a row with three or more boxes, or
            \item there exists a row with two boxes arranged in the order of $-$ and $+$.
        \end{itemize}
        Then $\scrA_{\underline{d}}(\psi)$ is not of a unitary lowest weight representation by Corollary \ref{signed_tableau_lt_wt_rep}.
        Hence, the assertion holds.
    \end{proof}

    By Lemma \ref{lemma_cond_d}, it suffices to consider $\underline{d} = \underline{d_0}$.
    We already described the nonvanishing conditions of such cohomological inductions in Lemma \ref{lemma_nonvanishing}.
    In the following lemma, we will investigate the necessary condition that $\scrA_{\underline{d_0}}(\psi)$ is isomorphic to $\pi_\lambda$ with a given $\lambda$.
    A key point here is that the cohomological induction $\scrA_{\underline{d}}(\psi)$ is in the weakly fair range.

    \begin{lem}\label{basic_properties_of_d_0}
        Take an irreducible unitary lowest weight representation $\pi_\lambda$ with lowest $K$-type $\lambda$.
        Let $\psi$ be an $A$-parameter with $\pi_\lambda \in \Pi(\psi)$.
        Let $\nu_i$ be the segments associated with $\psi$ and $\underline{d_0}$.
        We then have the following:
        \begin{enumerate}
            \item As multisets, $\nu_{<j}$ and $\nu_{>j}$ are multiplicity free.
            \item If $\nu_j \subset \nu_{>j}$, then $q_j = 0$.
            \item $\nu_j \not\subset \nu_{<j}$.
            \item $I \subset \nu_{<j} \sqcup \nu_{>j}$.
            \item If $\nu_{<j} \cap \nu_{>j} \neq \emptyset$, then $\nu_j \subset \nu_{>j}$ and $q_j = 0$.
            \item If $\nu_{<j} \cap \nu_{>j} \cap I \neq \emptyset$ and $I \cap \nu_j \neq \emptyset$, then $\nu_j\subset I \subset \nu_{>j}$ and $q_j = 0$.
            \item If $\nu_{<j} \cap \nu_{>j} \cap I = \emptyset$, then $I \subset \nu_j$.
            \item If $I \neq \emptyset$, then $I \cap \nu_j \neq \emptyset$.
        \end{enumerate}
    \end{lem}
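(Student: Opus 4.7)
Parts (1)--(3) follow directly from the nonvanishing criterion in Lemma~\ref{lemma_nonvanishing}, which applies because $\scrA_{\underline{d_0}}(\psi) \cong \pi_\lambda$ is nonzero and, by Lemma~\ref{lemma_cond_d}, $\underline{d} = \underline{d_0}$ is forced on us. Part (1) is literally the first bullet of Lemma~\ref{lemma_nonvanishing}. For (2), $\nu_j \subset \nu_{>j}$ gives $\#(\nu_j \cap \nu_{>j}) = a_j = p_j + q_j$; combining with the bound $\#(\nu_j \cap \nu_{>j}) \leq p_j$ from the same lemma forces $q_j = 0$. For (3), $\nu_j \subset \nu_{<j}$ would give $a_j \leq q_j$, hence $p_j = 0$, contradicting $p_j = p - a_{<j} > 0$ coming from the minimality of $j$.

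For (4), I would use a direct multiplicity count. Since $\pi_\lambda \in \Pi(\psi)$, Lemma~\ref{lemma_cond_d} gives $\chi_\psi = \chi_{\pi_\lambda}$, hence the multiset identity $\nu_1 \sqcup \cdots \sqcup \nu_r = P \sqcup Q$. Direct inspection of the defining formulas shows both $P$ and $Q$ are strictly increasing (consecutive differences are $\geq 1$), so any element of $P \cap Q$ has multiplicity exactly $2$ in $\nu$. By (1) together with the fact that $\nu_j$ is a single segment, such an element contributes at most $1$ to each of $\nu_{<j}, \nu_j, \nu_{>j}$ and so must live in at least two of the three blocks, in particular in $\nu_{<j} \sqcup \nu_{>j}$. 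Applying this to $I \subset P' \cap Q' \subset P \cap Q$ gives (4).

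The remaining parts (5)--(8) are the delicate statements. The common strategy I would follow is: pick an element witnessing the relevant hypothesis, pin down its position in $P \cap Q$ via the multiset identity, use the pigeonhole from (1) plus ``$\nu_j$ is a single segment'' to forbid multiplicity~$3$ in $\nu$, and then upgrade elementwise membership to segment-level containment by invoking the ordering of centers $t_1 \geq \cdots \geq t_r$ and the explicit column description of $\Ann(\scrA_{\underline{d_0}}(\psi))$ from Lemma~\ref{lemma_nonvanishing}. For (5), given $x \in \nu_a \cap \nu_b$ with $a < j < b$, one first notes $x \notin \nu_j$ (else multiplicity $3$); then the segments $\nu_a, \nu_j, \nu_b$ (whose centers satisfy $t_a \geq t_j \geq t_b$) together with matching the resulting annihilator tableau against the two-column annihilator of $\pi_\lambda$ from Lemma~\ref{q_j=0} forces $\nu_j \subset \nu_{>j}$, and (2) then gives $q_j = 0$. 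Parts (6)--(7) are refinements of this argument using the extra $I$-data: in (6) one traps $\nu_j \subset I \subset \nu_{>j}$; in (7) the hypothesis $\nu_{<j} \cap \nu_{>j} \cap I = \emptyset$ forces each $x \in I$ (still of multiplicity $2$ in $\nu$) to contribute to $\nu_j$. Part (8) is a consistency check: if $I \neq \emptyset$ and $I \cap \nu_j = \emptyset$, then as in (4) each $x \in I$ must contribute multiplicity $1$ to each of $\nu_{<j}$ and $\nu_{>j}$, so $I \subset \nu_{<j} \cap \nu_{>j}$; applying (5) then traps $\nu_j \subset \nu_{>j}$ with $q_j = 0$, and the tableau description of Lemma~\ref{lemma_nonvanishing} contradicts $I \cap \nu_j = \emptyset$.

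The main obstacle is precisely this upgrade from elementwise to segment-level containment in (5) (and its refinements in (6)--(8)): a single common element of $\nu_{<j}$ and $\nu_{>j}$ is very weak data, and forcing $\nu_j \subset \nu_{>j}$ genuinely requires both the $t_i$-ordering of segment centers and the two-column rigidity of the annihilator of a unitary lowest weight representation encoded by Corollary~\ref{signed_tableau_lt_wt_rep}, Lemma~\ref{q_j=0}, and the explicit column formula in Lemma~\ref{lemma_nonvanishing}.
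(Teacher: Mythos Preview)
Your treatment of (1)--(4) and (7) is correct and matches the paper exactly.

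The gap is in (5), (6), (8), and it is conceptual: you believe that upgrading from an elementwise intersection to a segment containment \emph{genuinely requires} comparing annihilator tableaux. It does not. The paper's argument for (5) uses only the weakly fair property (which is precisely your ``ordering of centers $t_1\ge\cdots\ge t_r$'') together with (2) and (3), as follows. Take $t\in\nu_{<j}\cap\nu_{>j}$; multiplicity~$\le 2$ forces $t\notin\nu_j=[\alpha,\beta]$, so either $t<\alpha$ or $t>\beta$. If $t<\alpha$ and $t\in\nu_i$ with $i<j$, then $\min\nu_i\le t<\alpha$; since $t_i\ge t_j$ means $\min\nu_i+\max\nu_i\ge\alpha+\beta$, we get $\max\nu_i>\beta$, hence $\nu_j\subset\nu_i\subset\nu_{<j}$, contradicting (3). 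If $t>\beta$ the symmetric argument gives $\nu_j\subset\nu_{>j}$ and (2) finishes. Part (6) is handled by the same center-mean trick, and no tableau ever appears. So your final paragraph, which singles out the tableau rigidity as the ``main obstacle,'' misidentifies the mechanism.

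Your sketch for (8) has a concrete error. You correctly reach $I\subset\nu_{<j}\cap\nu_{>j}$, hence by (5) $\nu_j\subset\nu_{>j}$ and $q_j=0$. But then you propose to invoke ``the tableau description of Lemma~\ref{lemma_nonvanishing}'' for a contradiction; that detailed column description is stated only for $q_j\neq 0$, so it no longer applies. The paper instead finishes (8) by a counting argument: from the proof of (5) one has $\beta<\min I$, so $\nu_j$ lies strictly below $I$; using $\nu_j\subset\nu_{>j}$ and Lemma~\ref{A_q_lt_wt_rep}(3) one shows $\nu_{>j}$ is forced to contain all of $Q$ together with at least one extra element ($x-1\in P$), whence $\#\nu_{>j}\ge q+1$, contradicting the definition of $j$. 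You should replace your tableau step by this size estimate.
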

    \begin{proof}
        The statements (1), (2), and (3) follow immediately from Lemma \ref{lemma_nonvanishing}.
        Note that $p_j \neq 0$ by definition of $j$.
        For (4), consider the multiplicities of each element in $\nu$.
        The multiplicities of elements in $I$ are two in $\nu$.
        Thus, $\nu_{<j} \sqcup \nu_{>j}$ contains $I$, since $\nu_j$ is a set.

        Set $I = [x,y]$ and $\nu_{j} = [\alpha, \beta]$.
        By $\pi_\lambda \in \Pi(\psi)$, one has $P \sqcup Q = \nu$.
        For (5), take $t \in \nu_{<j} \cap \nu_{>j} \neq \emptyset$.
        Since any element in $\nu$ has multiplicity at most two, we have either
        \begin{itemize}
            \item $t < \alpha \leq \beta$; or
            \item $\alpha \leq \beta < t$.
        \end{itemize}
        In the former case, the minimum member in $\nu_{j-1}$ is less than $\alpha$.
        Then, the maximum member in $\nu_{j-1}$ is greater than $\beta$ by the weakly fair property.
        In particular, we have $\nu_j \subset \nu_{j-1} \subseteq \nu_{<j}$.
        However, this cannot happen by (3).
        In the latter case, by the same proof, we have $\nu_j \subset \nu_{j+1} \subseteq \nu_{>j}$ and then $q_j = 0$ by (2).
        
        For (6), assume $\nu_{<j} \cap \nu_{>j} \cap I \neq \emptyset$ and $\nu_j \cap I \neq \emptyset$.
        Since any element in $I$ has multiplicity two in $\nu$, one has $\nu_{<j} \cap \nu_{>j} \cap I = I \setminus (I \cap \nu_j)$ by (1).
        Moreover, $I \cap \nu_j$ is a segment since $I$ and $\nu_i$ are segments.
        Hence, $\nu_{<j} \cap \nu_{>j} \cap I$ contains $x$ or $y$.
        When $y \in \nu_{<j} \cap \nu_{>j}$, we denote by $z$ the minimal member in $\nu_{<j} \cap \nu_{>j} \cap I$.
        The maximal member in $\nu_j \cap I$ is $z-1$.
        By the weakly fair property, the set $\nu_{<j}$ does not contain $z-1$.
        We thus have $z-1 \in \nu_{>j}$.
        Then, the set $\nu_{>j}$ contains $I$ and in particular, $\nu_j \subset I \subset \nu_{>j}$.
        Here, we use the fact that the real numbers $x-1$ and $y+1$ have multiplicity at most one in $\nu$.
        When $x \in \nu_{<j} \cap \nu_{>j}$, one has $\nu_j \subset I \subset \nu_{<j}$ by the same discussion.
        This case does not happen by (3).
        Hence, we have $\nu_j \subset I \subset \nu_{>j}$ and then $q_j = 0$.

        The statement (7) follows immediately from the fact that the multiplicities of elements in $I$ are two.

        For (8), suppose $I \neq \emptyset$ and $I \cap \nu_j = \emptyset$.
        By the proof of (5), we have $\nu_j \subset \nu_{>j}$ and $\alpha \leq \beta < x$.
        Then, $q_j = 0$, and there exists an element $t$ in $P' \sqcup Q'$ with multiplicity two such that $t < x$.
        The existence of $t$ implies $\lambda_p - \lambda_{p+1} < N-q'$.
        By Lemma \ref{A_q_lt_wt_rep} (3), we may assume $N-p' \leq \lambda_p - \lambda_{p+1}$.
        Then, the set $I$ is equal to $Q'$.
        In this case, $x-1 \not\in Q$, but $x-1 \in P \cap \nu_{>j}$ by definition.
        Moreover, the segment $\nu_{>j}$ contains the set $Q \setminus Q'$ and $I = Q'$, and then $Q \subset \nu_{>j}$.
        Hence, $\#\nu_{>j} \geq \#\{x-1\} + \#Q \geq q+1$.
        This contradicts the definition of $j$.
        Hence $I \cap \nu_j \neq \emptyset$.
        This completes the proof.
    \end{proof}

    In the following sections, we complete the proof of Theorem \ref{main}.
    
\subsection{Proof of main theorem: the case \texorpdfstring{$N-p'\leq \lambda_p-\lambda_{p+1} < N-q'$}{}}\label{case_N-p<a-b<N-q}
    We first show the only if part.
    By the assumption, one has $I = Q'$.
    Put $\nu_j = [\alpha, \beta]$.

    When $q_j = 0$, by Lemma \ref{q_j=0}, we have $\nu_{\leq j} = P$ and $I = Q' \subset\nu_{> j} = Q$ if $\scrA_{\underline{d_0}}(\psi) \isom \pi_\lambda$.
    In this case, one has $\alpha = \lambda_p - (N-1)/2$.
    Note that by the weakly fair property, we have $\beta \geq \lambda_{p+1} + (N-1)/2$.
    Hence, the segment $\nu_j$ contains $[\lambda_{p}-(N-1)/2, \lambda_{p+1}+(N-1)/2]$.

    When $q_j \neq 0$, the segment $\nu_j$ is not contained in $\nu_{>j}$ by Lemma \ref{basic_properties_of_d_0} (1).
    Also, $\nu_j$ contains $I$ by Lemma \ref{basic_properties_of_d_0} (7).
    Set $\nu_1' = \nu_{<j} \setminus (\nu_j \cap \nu_{<j}), \nu_{2}' = \nu_j \cap \nu_{<j}, \nu_3' = \nu_j, \nu_4' = \nu_j \cap \nu_{>j}$ and $\nu_5' = \nu_{>j} \setminus (\nu_j \cap \nu_{>j})$.
    Let $\frakq'$ be the $\theta$-stable parabolic subalgebra associated with $\{(\#(\nu_1'), 0), (\#(\nu_2'), 0), (p_j, q_j), (0, \#(\nu_4')), (0, \#(\nu_5'))\}$.
    Then, the cohomological induction $A(\frakq', \nu_1',\ldots,\nu_5')$ is in the mediocre range and is isomorphic to $\scrA_{\underline{d_0}}(\psi)$ by Corollary \ref{cor_transform}.
    The only if part follows from the following lemma:

    \begin{lem}\label{lemma_N-p'<a-b<N-q'}
        With the above notation, if $A(\frakq', \nu_1',\ldots,\nu_5') \isom \pi_\lambda$, we then have $[\lambda_{p}-(N-1)/2, \lambda_{p+1}+(N-1)/2] \subset \nu_j \subset P'$.
    \end{lem}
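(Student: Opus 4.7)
The plan is to invoke the Barbasch-Vogan parametrization to replace the hypothesis $A(\frakq',\nu_1',\ldots,\nu_5') \isom \pi_\lambda$ with an equality of $\nu$-antitableaux, and then to extract the desired containments by matching entries on the two sides. Both representations are unitary lowest weight with infinitesimal character $P \sqcup Q$: the left side because $\frakq'$ is holomorphic (\cite[Lemma 1.7]{1987_Adams}), and the right by assumption. Corollary \ref{signed_tableau_lt_wt_rep} tells us that the signed tableau $\mathrm{AS}$ of such a representation is determined by the shape of $\Ann$, so the isomorphism reduces to $\Ann(A(\frakq',\nu_1',\ldots,\nu_5')) = \Ann(\pi_\lambda)$.

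I would compute the right-hand side first. In the range $N-p' \leq \lambda_p - \lambda_{p+1} < N-q'$, Lemma \ref{A_q_lt_wt_rep} (2) realizes $\pi_\lambda$ as a cohomological induction from the holomorphic parabolic with blocks $\{(1,0)^{p-p'}, (p',0),(0,q'),(0,1)^{q-q'}\}$, whose associated segments are the singletons of $P \setminus P'$, then $P'$, then $Q'$, then the singletons of $Q \setminus Q'$. All blocks satisfy $p_iq_i = 0$, so Lemma \ref{q_j=0} applies and produces an explicit two-column antitableau. What matters is the position of the block $P'$: the top of the second column of $\Ann(\pi_\lambda)$ equals $\max P' = \lambda_p - (N+1)/2 + p'$, and the second column descends down through $P'$ to its minimum $\min P' = \lambda_p - (N-1)/2$.

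Next I would compute the left-hand side by applying Lemma \ref{lemma_nonvanishing} directly to the five-block holomorphic $\frakq'$. The index ``$j$'' of that lemma is $3$ here, and its ``$\nu_j$'' is our $\nu_j$. Since $q_j \neq 0$, Lemma \ref{basic_properties_of_d_0}(5) gives $\nu_{<j} \cap \nu_{>j} = \emptyset$ and (7) gives $I = Q' \subset \nu_j$, which simplifies the input to Lemma \ref{lemma_nonvanishing}. In the principal subcase $m \neq 0$, the $(1,2)$-entry of the resulting antitableau is $\nu_{j,1} = \max \nu_j$, while the bottom of the second column is controlled by $\min \nu_j$ together with the next elements of $\nu_{<j} \sqcup \nu_{>j}$ bracketing $\nu_j$.

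Matching the two antitableaux then yields the claim: equating tops of the second column gives $\max \nu_j = \max P'$, equating bottoms gives $\min \nu_j \leq \min P'$, and combined with $\max \nu_j \geq \max Q'$ (from $I = Q' \subset \nu_j$) we obtain $[\lambda_p-(N-1)/2,\, \lambda_{p+1}+(N-1)/2] \subset \nu_j \subset P'$. The main obstacle will be the branching built into Lemma \ref{lemma_nonvanishing}: depending on whether $m = 0$, on the value of the auxiliary integer $i_0$, and on how the entries of $\nu_j \cap \nu_{<j}$ and $\nu_j \cap \nu_{>j}$ interleave with the rest of $\nu_{<j} \sqcup \nu_{>j}$, the shape of the produced tableau changes. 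In each subcase I must show that if $\nu_j$ violated either containment, then Lemma \ref{lemma_nonvanishing} would force an entry outside the two-column shape determined by $\Ann(\pi_\lambda)$, contradicting Step~1.
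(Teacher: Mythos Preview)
Your overall strategy---reduce to equality of $\nu$-antitableaux via Barbasch--Vogan and then read off constraints on $\nu_j$---is exactly the paper's. However, your execution contains a concrete computational error that derails the argument. You claim that the second column of $\Ann(\pi_\lambda)$ is filled with the entries of $P'$, with $(1,2)$-entry $\max P'=\lambda_p-(N+1)/2+p'$. This is wrong: applying Lemma~\ref{q_j=0} to the realization of Lemma~\ref{A_q_lt_wt_rep}(2), the relevant swap index $i_0$ vanishes (because $\lambda_{p+1}\le\lambda_p$ here), so the second column of $\Ann(\pi_\lambda)$ consists of the top $\min\{p,q\}$ entries of $Q$, beginning with $\lambda_{p+1}+(N-1)/2=\max Q'$. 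Matching $(1,2)$-entries therefore gives $\beta=\lambda_{p+1}+(N-1)/2$ in the case $m\ne 0$, not $\beta=\max P'$. More seriously, the lemma's conclusion allows $\beta$ to range over the whole interval $[\max Q',\max P']$, so no amount of tableau matching will pin $\beta$ down exactly; attempting to do so is a dead end. (Your own deductions illustrate this: from $\max\nu_j=\max P'$ and $\min\nu_j\le\min P'$ you would get $\nu_j\supset P'$, which is the \emph{opposite} inclusion to what is claimed.)

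What actually has to be nailed down is the \emph{left} endpoint $\alpha=\lambda_p-(N-1)/2$; once that is known, $\nu_j\subset P'$ is automatic because $\max P'+1\notin\nu=P\sqcup Q$ (it lies in the gap above $P'$ in $P$, and $\max Q\le\max P'$). The paper obtains $\alpha\le\lambda_p-(N-1)/2$ by a counting argument that you are missing entirely: if $\alpha\ge\lambda_p-(N-1)/2$ then $Q\subset(\nu_{<j}\cap\nu_j)\sqcup\nu_{>j}$, while the nonvanishing bound $\#(\nu_{<j}\cap\nu_j)\le q_j=q-\#\nu_{>j}$ forces equality, whence $\alpha\le\lambda_p-(N-1)/2$. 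Only for the reverse inequality does the paper compare antitableaux, and there the contradiction comes from a mismatch of specific entries (either in the first column near $\lambda_p-(N-1)/2$, or at the $(1,2)$-entry when $f>0$), not from ``an entry outside the two-column shape'' as you suggest---both sides always have two columns by Corollary~\ref{signed_tableau_lt_wt_rep}.
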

    \begin{proof}
        By assumption $q_j \neq 0$, the segment $\nu_j$ contains $I$.
        Hence, it suffices to show $\alpha = \lambda_p - (N-1)/2$.
        Suppose $\alpha \geq \lambda_p-(N-1)/2$.
        By Lemma \ref{lemma_nonvanishing}, one has $\#(\nu_{<j} \cap \nu_j) \leq q_j = q - \#\nu_{>j}$.
        Since the set $(\nu_{<j} \cap \nu_j) \sqcup \nu_{>j}$ contains $Q$ in this case, one has $\#\nu_{>j} + \#(\nu_{<j} \cap \nu_{j}) \geq q$.
        Hence, we have $\nu_{>j} \sqcup (\nu_{<j} \cap \nu_j) = Q$.
        In other words, $\alpha \leq \lambda_{p}-(N-1)/2$.
        
        It remains to show $\alpha \geq \lambda_{p}-(N-1)/2$.
        Suppose $\lambda_{p} - (N-1)/2 > \alpha$.
        To show $A(\frakq', \nu_1',\ldots,\nu_5') \not\isom \pi_\lambda$ under this assumption, recall the tableau $\Ann(\pi_\lambda)$.
        By Lemma \ref{A_q_lt_wt_rep} (2), the first column of the tableau $\Ann(\pi_\lambda)$ consists of entries
        \[
        \lambda_1+(p-q-1)/2, \lambda_2+(p-q-3)/2, \ldots, \lambda_{p}-(N-1)/2, \ldots
        \]
        and the second column consists of entries
        \[
        \lambda_{p+1}+(N-1)/2, \lambda_{p+2}+(N-3)/2, \ldots, \lambda_{p+\min\{p,q\}} + (N+1)/2-\min\{p,q\}
        \]
        from top to bottom.
        Note that this is invariant under the Trapa's algorithm.
        To describe the tableau $A(\frakq', \nu_1',\ldots,\nu_5')$, set $\nu \setminus \nu_j = \{\sigma_1,\ldots,\sigma_{N - \#(\nu_j)}\}$ and $\nu_j \cap (\nu_{<j} \sqcup \nu_{>j}) = \{\sigma_{f+1},\ldots,\sigma_g\}$ with $\sigma_1 \geq \cdots \geq \sigma_{N-\#(\nu_j)}$.
        By $\#(\nu_{<j}) + p_j + q_j - \#(\nu_{<j} \cap \nu_j)>p$ and $\#(\nu_{<j}) + p_j = p$, we have $q_j - \#(\nu_{<j} \cap \nu_{j})>0$.
        When there is no $x \in \nu$ with $\beta < x$, the first column of $\Ann(A(\frakq', \nu_1',\ldots,\nu_5'))$ consists of
        \[
        \beta = \lambda_1+(p-q-1)/2, \beta-1,\ldots, \alpha,\ldots
        \]
        from top to bottom.
        In particular, the entry next to $\lambda_{p}-(N-1)/2$ is $\lambda_{p}-(N+1)/2$.
        However, the entry next to $\lambda_{p}-(N-1)/2$ in the first column of $\Ann(\pi_\lambda)$ does not equal $\lambda_{p}-(N+1)/2$.
        Indeed, if $q\leq p$, there is no such box.
        If $q>p$, the entry is $\lambda_{2p+1}+(N+1)/2-(p+1) = \lambda_{2p+1} -(p-q+1)/2$.
        We then have
        \begin{align*}
            \lambda_{p}-(N+1)/2 - (\lambda_{2p+1} -(p-q+1)/2) 
            &= \lambda_p - \lambda_{2p+1} -q\\
            &= \lambda_p - \lambda_{p+1} - (N-p) + (\lambda_{p+1} - \lambda_{2p+1})\\
            &\geq  \lambda_{p+1} - \lambda_{2p+1}.
        \end{align*}
        For the last inequality, we use $P=P'$ and $N-p' \leq \lambda_{p}-\lambda_{p+1}$.
        By $N-p = N - p' < N-q'$, one has $q' < p$ and then $\lambda_{p+1} - \lambda_{2p+1} > 0$.
        Hence, the tableaux $\Ann(\pi_\lambda)$ and $\Ann(A(\frakq', \nu_1',\ldots,\nu_5'))$ are different and in particular, the representations are different.
        We may assume that there exists $x \in \nu$ such that $x > \beta$.
        Put $f=\#\{x \in \nu \mid x>\beta\}$.
        Let $m= \min\{f, q_j - \#(\nu_j \cap \nu_{<j})\}$ and $i_0$ be the maximal integer such that $1\leq i_0 \leq g-f$ and $\sigma_{f+i_0} \geq \nu_{j, m+i_0}$.
        Here, $\nu_j = \{\nu_{j,1},\ldots,\nu_{j, a_j}\}$ with $\nu_{j,1} > \cdots > \nu_{j,a_j}$.
        By assumption, $m$ is positive.
        By Lemma \ref{lemma_nonvanishing}, the $(1,2)$-th entry in $\Ann(A(\frakq', \nu_1',\ldots,\nu_5'))$ is $\beta$.
        Hence, we have $\beta = \lambda_{p+1}+(N-1)/2$.
        This shows $i_0 \geq \#I$.
        The second column of $\Ann(A(\frakq', \nu_1',\ldots,\nu_5'))$ consists of
        \[
        \nu_{j,1}, \nu_{j,2}, \ldots, \nu_{j, m+i_0}, \ldots
        \]
        from top to bottom.
        In particular, the entry next to $\nu_{j, \#(I)} = \lambda_{p+1}+(N+1)/2-q'$ is $\nu_{j, \#(I)+1} = \nu_{j, \#(I)}-1$.
        Note that $\lambda_{p+1}+(N-1)/2-q'$ is in $\nu$ by $\lambda_p-\lambda_{p+1} < N-q'$.
        However, in the second column of $\Ann(\pi_\lambda)$, the entry next to $\lambda_{p+1}+(N+1)/2-q'$ is $\lambda_{p+q'+1}+(N-1)/2-q' < \nu_{j, \#(I)+1}$, if it exists.
        Hence, the representation $\lambda_{p+1}+(N+1)/2-q'$ is not isomorphic to $\pi_\lambda$ since the associated $\nu$-antitableau tableaux are different.
        This completes the proof.
    \end{proof}

    We show the converse.
    Suppose that $\scrA_{\underline{d_0}}(\psi)$ satisfies the condition $[\lambda_p-(N-1)/2, \lambda_{p+1}+(N-1)/2] \subset \nu_j \subset P'$.
    The nonvanishing of $\scrA_{\underline{d_0}}(\psi)$ follows from Lemma \ref{lemma_nonvanishing}.
    Since the multiset $\nu \setminus \nu_j = \nu_{<j} \sqcup \nu_{>j}$ is multiplicity free, the representation $\scrA_{\underline{d_0}}(\psi)$ is isomorphic to $\pi_\lambda$ by Lemma \ref{A_q_lt_wt_rep}, Corollary \ref{cor_transform} and Corollary \ref{cor_transform_2}.
     
\subsection{Proof of main theorem: the case \texorpdfstring{$N-q' \leq \lambda_p-\lambda_{p+1} < N-p'$}{}}\label{case_case_N-q<a-b<N-p}
    We first show the only if part.
    Let $P'' = (P \cap Q') \setminus I$ and $p'' = \#(P'')$.
    Note that one has $I = P'$ and $I \cap \nu_j \neq \emptyset$ by Lemma \ref{basic_properties_of_d_0} (8).
    When $q_j = 0$, by Lemma \ref{q_j=0}, the representation $\scrA_{\underline{d_0}}(\psi)$ is isomorphic to $\pi_\lambda$ if and only if $\nu_{\leq j} = P$.

    We consider the case where $q_j \neq 0$.
    Then, the multiset $\nu_{<j} \sqcup \nu_{>j}$ is multiplicity free and $\nu_j$ is not contained in $\nu_{>j}$.
    Set $\nu_1' = \nu_{<j} \setminus (\nu_j \cap \nu_{<j}), \nu_{2}' = \nu_j \cap \nu_{<j}, \nu_3' = \nu_j, \nu_4' = \nu_j \cap \nu_{>j}$ and $\nu_5' = \nu_{>j} \setminus (\nu_j \cap \nu_{>j})$.
    Let $\frakq'$ be the $\theta$-stable parabolic subalgebra associated with $\{(\#(\nu_1'), 0), (\#(\nu_2'), 0), (p_j, q_j), (0, \#(\nu_4')), (0, \#(\nu_5'))\}$.
    Then, the cohomological induction $A(\frakq', \nu_1',\ldots,\nu_5')$ is in the mediocre range and is isomorphic to $\scrA_{\underline{d_0}}(\psi)$ by Corollary \ref{cor_transform}.
    The statement follows from the following lemma:

    \begin{lem}\label{lemma_N-q'<a-b<n-p'}
        With the above notation, if $A(\frakq', \nu_1',\ldots,\nu_5') \isom \pi_\lambda$, we then have $[\lambda_{p}-(N-1)/2, \lambda_{p+1}+(N-1)/2] \subset \nu_j \subset Q'$. 
    \end{lem}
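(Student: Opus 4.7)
The plan is to mirror the proof of Lemma \ref{lemma_N-p'<a-b<N-q'}, interchanging the roles of the left and right endpoints of $\nu_j = [\alpha,\beta]$ to reflect the swap $I = Q' \leftrightarrow I = P'$ that distinguishes cases 1 and 2. Writing $\nu_j = [\alpha,\beta]$, the conclusion unpacks to $\alpha \leq \lambda_p - (N-1)/2$, $\beta = \lambda_{p+1} + (N-1)/2$, and $\alpha \geq \lambda_{p+1}+(N+1)/2 - q'$. The first of these bounds is immediate once I establish $P' \subset \nu_j$: since $q_j \neq 0$, Lemma \ref{basic_properties_of_d_0}(6) forces $\nu_{<j} \cap \nu_{>j} \cap I = \emptyset$, and then (7) gives $P' = I \subset \nu_j$.

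For the inequality $\beta \geq \lambda_{p+1}+(N-1)/2$ I would argue by a multiplicity count dual to that of the previous lemma. Assume $\beta < \lambda_{p+1}+(N-1)/2$; in this regime the multiset $\nu_{<j} \sqcup (\nu_j \cap \nu_{>j})$ has to cover all of $P$, so $\#\nu_{<j} + \#(\nu_j \cap \nu_{>j}) \geq p$. Combined with Lemma \ref{lemma_nonvanishing}'s bound $\#(\nu_j \cap \nu_{>j}) \leq p_j = p - \#\nu_{<j}$, this forces equality, which in turn pushes the upper endpoint of $\nu_j$ up to $\lambda_{p+1}+(N-1)/2$, contradicting the assumption.

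For the reverse inequality $\beta \leq \lambda_{p+1}+(N-1)/2$, assume instead that $\beta > \lambda_{p+1}+(N-1)/2$ and compare the two tableaux. By Lemma \ref{A_q_lt_wt_rep}(2), the $(1,2)$-entry of $\Ann(\pi_\lambda)$ is $\lambda_{p+1}+(N-1)/2$, while Lemma \ref{lemma_nonvanishing} identifies the $(1,2)$-entry of $\Ann(\scrA_{\underline{d_0}}(\psi))$ with $\nu_{j,1} = \beta$ whenever the auxiliary integer $m$ from that lemma is positive; after verifying $m > 0$ in the relevant subcases one obtains the desired contradiction. The final bound $\alpha \geq \lambda_{p+1}+(N+1)/2-q' = \min Q'$ follows from an analogous first-column tableau comparison: if $\alpha$ fell strictly below $\min Q'$, the explicit description of $\Ann(\scrA_{\underline{d_0}}(\psi))$ from Lemma \ref{lemma_nonvanishing} would produce a first-column entry that disagrees with the corresponding entry of $\Ann(\pi_\lambda)$ given by Lemma \ref{A_q_lt_wt_rep}(2).

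The main obstacle I expect is the bookkeeping of the indices $f, g, m, i_0$ appearing in Lemma \ref{lemma_nonvanishing}: as in the previous lemma, the argument branches into subcases depending on whether any element of $\nu \setminus \nu_j$ exceeds $\beta$, and one must verify $m > 0$ in each branch before invoking the $(1,2)$-entry identity, all while tracking the $Q'$-side asymmetry introduced by the lowest-weight (rather than highest-weight) convention for $\pi_\lambda$.
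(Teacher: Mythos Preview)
Your outline mirrors the paper's proof: both get $\alpha\le\lambda_p-(N-1)/2$ from $I=P'\subset\nu_j$, establish $\beta\ge\lambda_{p+1}+(N-1)/2$ by a cardinality count, and then force $\beta\le\lambda_{p+1}+(N-1)/2$ by comparing the $(1,2)$-entries of $\Ann(\pi_\lambda)$ and $\Ann(\scrA_{\underline{d_0}}(\psi))$, branching on whether some $x\in\nu$ exceeds $\beta$.

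The one place your sketch does not go through is the cardinality step for $\beta\ge\lambda_{p+1}+(N-1)/2$. The inclusion $P\subset\nu_{<j}\sqcup(\nu_j\cap\nu_{>j})$ is false in general: any $x\in P\setminus Q$ with $\alpha\le x\le\beta$ has multiplicity one in $\nu$ and lies only in $\nu_j$, hence in neither $\nu_{<j}$ nor $\nu_j\cap\nu_{>j}$. Even granting the inclusion, the resulting equality $\#(\nu_j\cap\nu_{>j})=p_j$ does not by itself ``push $\beta$ up'' to $\lambda_{p+1}+(N-1)/2$; you would need a further argument. The paper proceeds differently here: assuming $\beta<\lambda_{p+1}+(N-1)/2$, it first uses $q_j\ne0$ together with weak fairness to see that $\nu_{<j}\supset\{x\in\nu:x>\beta\}$, so that $\nu_{\leq j}\supsetneq P$, and then computes
\[
\#(\nu_j\cap\nu_{>j})-p_j \;=\; \#\nu_{<j}-\#(\nu_j\cap\nu_{<j})-(p-p'-p'')\;>\;0
\]
directly (with $p''=\#((P\cap Q')\setminus I)$), contradicting the nonvanishing bound $\#(\nu_j\cap\nu_{>j})\le p_j$. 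Your additional fourth step bounding $\alpha\ge\min Q'$ by a first-column comparison is reasonable in spirit, though the paper omits it, asserting that only $\beta=\lambda_{p+1}+(N-1)/2$ remains once $I\subset\nu_j$ is known.
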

    \begin{proof}
        Put $\nu_j = [\alpha, \beta]$.
        Since $\nu_j$ contains $I$, one has $\alpha \leq \lambda_{p}-(N-1)/2$.
        It remains to show $\beta = \lambda_{p+1}+(N-1)/2$.
        Suppose that $\beta < \lambda_{p+1}+(N-1)/2$.
        In this case, we have $p_j \geq \#(\nu_j \cap \nu_{>j})$ and the multiset $\nu_{\leq j}$ contains a set $P$ properly, since $\nu_{<j}$ contains $\{x \in \nu \mid x > \beta\}$.
        We compute $\#(\nu_j \cap \nu_{>j}) - p_j$.
        By $\#(\nu_j \cap \nu_{>j}) = \#I + \#P'' - \#(\nu_{<j}\cap\nu_j)$, one has
        \begin{align*}
            \#(\nu_j \cap \nu_{>j}) - p_j 
            &= p'+p''-\#(\nu_j \cap \nu_{<j}) - p_j\\
            &= p'+p''-\#(\nu_j \cap \nu_{<j}) - (p-\#(\nu_{<j}))\\
            &= \#(\nu_{<j}) - \#(\nu_j \cap \nu_{<j}) -(p-p'-p'')>0.
        \end{align*}
        The last inequality follows from $\beta < \lambda_{p+1}+(N-1)/2$.
        Hence $A(\frakq', \nu_1',\ldots,\nu_5')$ is zero.
        This is a contradiction.
        Therefore we have $\beta \geq \lambda_{p+1}+(N-1)/2$.

        It remains to show $\beta \leq \lambda_{p+1}+(N-1)/2$.
        Put $f = \#\{x \in \nu \mid x > \lambda_{p+1} + (N-1)/2\}$.
        Suppose $\beta > \lambda_{p+1}+(N-1)/2$.
        By assumption, we have $q_j > \#(\nu_j \cap \nu_{<j})$ and $f \neq 0$.
        We recall the tableau $\Ann(\pi_\lambda)$.
        By Lemma \ref{A_q_lt_wt_rep}, the second column of the tableau $\Ann(\pi_\lambda)$ consists of
        \begin{align*}
        \underbrace{\lambda_{p+1} + (N-1)/2, \ldots, \lambda_{p+1}+(N+1)/2-(m+i_0)}_{m+i_0}, \sigma_{f+i_0+1}, \dots, \sigma_{\min\{t, f+p-m\}}.
        \end{align*}
        Here, $\nu \setminus Q' = \{\sigma_1,\ldots, \sigma_{t}\}, Q' \cap P = \{\sigma_{f+1},\ldots, \sigma_g\}, m= \min\{f,f+q'-p\}$ with $\sigma_1 > \cdots > \sigma_{N-q'}$, and, $i_0$ is the maximal integer such that $1 \leq i_0 \leq g-f$ and $\sigma_{f+i_0} \geq \lambda_{p+1} + (N+1)/2-(m+i_0)$.
        In particular, the $(1,2)$-th entry is $\lambda_{p+1}+(N-1)/2$.
        Note that in this case, one has $m>0$, and the number of boxes in the second column from the top to the box filled with $\lambda_{p}-(N-1)/2$ is greater than or equal to $m+p'+p''$.
        If there exists $x \in \nu$ with $x>\beta$, the $(1,2)$-th entry of $\Ann(A(\frakq', \nu_1,\ldots,\nu_5))$ is $\beta$ by $q_j > \#(\nu_j \cap \nu_{>j})$.
        Thus, the tableaux $\Ann(\pi_\lambda)$ and $\Ann(A(\frakq', \nu_1,\ldots,\nu_5))$ are different by the assumption $\beta > \lambda_{p+1}+(N-1)/2$.
        In other words, the representations $\pi_\lambda$ and $A(\frakq', \nu_1', \ldots, \nu'_5)$ are different.
        The remaining case is that there is no $x \in \nu$ with $x > \beta$.
        Then, the second column of $\Ann(\scrA_{\underline{d_0}}(\psi))$ consists of entries
        \[
        \lambda_{f+1}+\frac{p-q-1}{2}-f,\lambda_{f+2}+\frac{N-3}{2}-f, \ldots, \lambda_{p}-\frac{N-1}{2},\ldots
        \]
        from top to bottom.
        The number of boxes from the top to the box filled with $\lambda_p-(N-1)/2$ is $p''+p'$, which is different from that of $\Ann(\pi_\lambda)$.
        Therefore, the representation $\scrA_{\underline{d_0}}(\psi)$ is not isomorphic to $\pi_\lambda$ by $m>0$.
        This completes the proof.
        \end{proof}

        It remains to show the converse.
        Suppose that $\scrA_{\underline{d_0}}(\psi)$ satisfies $\nu_{\leq j} = P$.
        Then, $\scrA_{\underline{d_0}}(\psi) \isom \pi_\lambda$ by Lemma \ref{q_j=0}.
        Suppose next that $\scrA_{\underline{d_0}}(\psi)$ satisfies $[\lambda_{p}-(N-1)/2, \lambda_{p+1}+(N-1)/2] \subset \nu_j \subset Q'$.
        In this case, by explicit computation of $p_j$ and $q_j$, the representation $\scrA_{\underline{d_0}}$ is nonzero and isomorphic to $\pi_\lambda$ by Lemma \ref{A_q_lt_wt_rep}, Corollary \ref{cor_transform} and Corollary \ref{cor_transform_2}.

\subsection{Proof of main theorem: the case \texorpdfstring{$N-p', N-q' \leq \lambda_p-\lambda_{p+1}$}{}}\label{case_N-p_N-q<a-b}

    We first show the only if part.
    When $q_j =0$, by Lemma \ref{q_j=0}, we have $\nu_{\leq j} = P$ and $\nu_{>j} = Q$.
    In particular, $P \subset \nu_{\leq j} \subset P \sqcup I$.
    When $q_j \neq 0$, the segment $\nu_j$ contains $I$.
    When $\nu_j \cap (Q' \setminus I) = \emptyset$, we have $\nu_{\leq j} \subset P \sqcup I$ and $P \subset \nu_{\leq j}$ since $\nu_{>j}$ is multiplicity free by Lemma \ref{lemma_nonvanishing}.
    We may assume that $\nu_j \cap (Q' \setminus I) \neq \emptyset$ and $\lambda_{1}+(p-q-1)/2 > \lambda_{p+1}+(N-1)/2$.
    Consider the tableaux for  $\Ann(\scrA_{\underline{d_0}}(\psi))$ and $\Ann(\pi_\lambda)$.
    In this case, we have $\nu_j \subset P' \sqcup Q'$ and $q_j \neq 0$.
    We show that $\nu_j$ is contained in $Q'$ if $\scrA_{\underline{d_0}}(\psi) \isom \pi_\lambda$.
    Note that we have $q_j - \#(\nu_j \cap \nu_{<j}) > 0$.
    Indeed, by assumption, the multiset $\nu_{\leq j}$ contains $P$ properly and then $P \sqcup (\nu_j \cap \nu_{<j}) \subsetneq \nu_{\leq j}$.
    We then have $p + \#(\nu_j \cap \nu_{<j}) < \#\nu_{< j} + p_j + q_j$ and, in particular, $0 < q_j - \#(\nu_j \cap \nu_{<j})$ by $p_j + \#(\nu_{<j}) = p$.
    By Lemma \ref{A_q_lt_wt_rep}, the first column of $\Ann(\pi_\lambda)$ is 
    \[
    \lambda_1+(p-q-1)/2,\lambda_1+(p-q-3)/2, \ldots, \lambda_p-(N-1)/2,\ldots
    \]
    and the second column is
    \[
    \lambda_{p+1}+(N-1)/2,\lambda_{p+2}+(N-3)/2, \ldots, \lambda_{p+\min\{p,q\}}+(N+1)/2-\min\{p,q\}.
    \]
    Put $\nu_j = [\alpha, \beta]$.
    Suppose $\beta = \lambda_1+(p-q-1)/2$, then $P=P'$ and $\beta > \lambda_{p+1}+(N-1)/2$.
    The first column of $\Ann(\scrA_{\underline{d_0}}(\psi))$ consists of
    \[
    \beta, \beta-1,\ldots,\alpha,\ldots
    \]
    and the second column consists of
    \[
    \lambda_{p+1}+(N-1)/2, \lambda_{p+2}+(N-3)/2, \ldots, \lambda_{p+\ell}+(N+1)/2-\ell, \alpha-1,\ldots
    \]
    from top to bottom.
    Here, $\ell$ is the unique positive integer such that $\lambda_{p+\ell} +(N+1)/2-\ell = \lambda_{p}-(N-1)/2$.
    Note that in the second column of $\Ann(\scrA_{\underline{d_0}}(\psi))$, the box next to the box filled with $\lambda_{p+\ell}+(N+1)/2-\ell$ exists if and only if $\beta > \lambda_{p+1}+(N-1)/2$ and $\nu_j \sqcup I \neq \nu$.
    Contrary to this, in the second column of $\Ann(\pi_\lambda)$, the box next to the box filled with $\lambda_{p+\ell}+(N+1)/2-\ell$ exists if and only if $\lambda_1+(p-q-1)/2 > \lambda_{p+1}+(N-1)/2$ and $I \neq Q$.
    Hence, under our assumption, there exists a box next to the box filled with $\lambda_{p+\ell}+(N+1)/2-\ell$ in the second column of $\Ann(\pi_\lambda)$ and it is equal to $\lambda_{p+\ell} + (N-1)/2 - \ell$.
    We may additionally assume $\nu \neq \nu_j \sqcup I$.
    The entry next to $\lambda_{p+\ell} + (N+1)/2-\ell$ in the second column of $\Ann(\scrA_{\underline{d_0}}(\psi))$ is strictly less than $\lambda_{p+\ell} + (N+1)/2-\ell-1$.
    This shows that the tableaux $\Ann(\scrA_{\underline{d_0}}(\psi))$ and $\Ann(\pi_\lambda)$ are different.
    Suppose $\beta \neq \lambda_1+(p-q-1)/2$.
    Then, the $(1,2)$-th entry in $\Ann(\scrA_{\underline{d_0}}(\psi))$ is $\beta$ by $q_j > \#(\nu_j \cap\nu_{<j})$.
    Hence, we have $\beta = \lambda_{p+1}+(N-1)/2$ if $\scrA_{\underline{d_0}} \isom \pi_\lambda$.
    In other words, one has $I \subset \nu_j \subset Q'$ if $\scrA_{\underline{d_0}} \isom \pi_\lambda$.

    The converse follows from Lemma \ref{A_q_lt_wt_rep}, Lemma \ref{lemma_nonvanishing}, Corollary \ref{cor_transform} and Corollary \ref{cor_transform_2}.
    This completes the proof.

\subsection{Proof of main theorem: the case \texorpdfstring{$\lambda_p-\lambda_{p+1} < N-p', N-q'$}{}}\label{case_a-b<N-p_n-q}

    We first show the only if part. Put $\nu_j = [\alpha, \beta]$.
    In this case, one has $q_j > 0$ by Lemma \ref{A_q_lt_wt_rep} (3).
    Then, $\nu_j$ contains $I$ and $\nu_{<j} \sqcup \nu_{>j}$ is multiplicity free.
    Set $\nu_1' = \nu_{<j} \setminus (\nu_j \cap \nu_{<j}), \nu_{2}' = \nu_j \cap \nu_{<j}, \nu_3' = \nu_j, \nu_4' = \nu_j \cap \nu_{>j}$ and $\nu_5' = \nu_{>j} \setminus (\nu_j \cap \nu_{>j})$.
    Let $\frakq'$ be the $\theta$-stable parabolic subalgebra associated with $\{(\#(\nu_1'), 0), (\#(\nu_2'), 0), (p_j, q_j), (0, \#(\nu_4')), (0, \#(\nu_5'))\}$.
    Then, the cohomological induction $A(\frakq', \nu_1',\ldots,\nu_5')$ is in the mediocre range and is isomorphic to $\scrA_{\underline{d_0}}(\psi)$ by Corollary \ref{cor_transform}.
    The statement follows from the following lemma:

    \begin{lem}\label{lemma_a-b<N-p',N-q'_2}
        With the above notation, if $A(\frakq', \nu'_1,\ldots,\nu'_5) \isom \pi_\lambda$, one has $\nu_j = \nu_{3}' = [\lambda_{p}-(N-1)/2, \lambda_{p+1}+(N-1)/2]$.
    \end{lem}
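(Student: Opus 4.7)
The plan is to adapt the tableau-comparison arguments from Lemmas \ref{lemma_N-p'<a-b<N-q'} and \ref{lemma_N-q'<a-b<n-p'}, each of which controls one endpoint of $\nu_j$, and apply them simultaneously to control both endpoints.

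First, I would show $q_j \neq 0$. Otherwise, Lemma \ref{q_j=0} realizes $\scrA_{\underline{d_0}}(\psi) \isom \pi_\lambda$ via a holomorphic $\theta$-stable parabolic with $q_j = 0$, and Lemma \ref{A_q_lt_wt_rep}(3) then forces $\min\{N-p'(\lambda), N-q'(\lambda)\} \leq \lambda_p - \lambda_{p+1}$, contradicting the case~(4) hypothesis. Given $q_j > 0$, Lemma \ref{basic_properties_of_d_0}(5) yields $\nu_{<j} \cap \nu_{>j} = \emptyset$, so $\nu_{<j} \sqcup \nu_{>j}$ is multiplicity-free, and Lemma \ref{basic_properties_of_d_0}(7) gives $I \subseteq \nu_j$. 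Write $\nu_j = [\alpha, \beta]$.

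Next, I would establish the outer bounds $\alpha \leq \lambda_p - (N-1)/2$ and $\beta \geq \lambda_{p+1} + (N-1)/2$ by counting, imitating the opening step of the proof of Lemma \ref{lemma_N-p'<a-b<N-q'}. Assuming $\alpha > \lambda_p - (N-1)/2$, every element of $Q$ not in $\nu_j$ must lie in the multiplicity-free multiset $\nu_{<j} \sqcup \nu_{>j}$, and the bound $\#(\nu_j \cap \nu_{<j}) \leq q_j = q - \#\nu_{>j}$ from Lemma \ref{lemma_nonvanishing} leads to an equality that forces $\alpha \leq \lambda_p - (N-1)/2$. The symmetric argument using $\#(\nu_j \cap \nu_{>j}) \leq p_j = p - \#\nu_{<j}$ and the elements of $P$ yields $\beta \geq \lambda_{p+1} + (N-1)/2$.

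For the reverse inequalities $\alpha \geq \lambda_p - (N-1)/2$ and $\beta \leq \lambda_{p+1} + (N-1)/2$, I would compare the primitive-ideal tableaux directly. Lemma \ref{A_q_lt_wt_rep}(1), whose hypothesis is exactly case~(4), describes the two columns of $\Ann(\pi_\lambda)$, while Lemma \ref{lemma_nonvanishing} describes those of $\Ann(A(\frakq', \nu'_1, \ldots, \nu'_5))$. If $\alpha < \lambda_p - (N-1)/2$, the first-column entry immediately beneath $\lambda_p - (N-1)/2$ in $\Ann(A(\frakq', \nu'_1, \ldots, \nu'_5))$ would be a fresh decrement from the segment $\nu_j$, whereas the corresponding entry in $\Ann(\pi_\lambda)$ is determined by an element of $\nu$ outside $\nu_j$ and is strictly smaller, exactly as in the final computation of the proof of Lemma \ref{lemma_N-p'<a-b<N-q'}. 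The dual second-column argument, modeled on the proof of Lemma \ref{lemma_N-q'<a-b<n-p'}, yields $\beta \leq \lambda_{p+1} + (N-1)/2$, and combining all four bounds completes the argument.

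The main obstacle is the bookkeeping in the tableau comparison: in case~(4), neither $P = P'$ nor $Q = Q'$ need hold, so the parameters $m$ and $i_0$ of Lemma \ref{lemma_nonvanishing} may place the critical entries in positions shifted from the case~(1) and case~(2) analyses. One must carefully verify in every subcase that the displaced entries genuinely disagree, exploiting the strict inequalities $\lambda_p - \lambda_{p+1} < N-p', N-q'$ to rule out coincidental matches between the two tableaux.
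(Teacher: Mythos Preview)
Your overall plan---combine the endpoint arguments from Lemmas~\ref{lemma_N-p'<a-b<N-q'} and~\ref{lemma_N-q'<a-b<n-p'}---is natural, and the tableau-comparison half is close to what the paper does. The gap is in your counting step for the ``outer bounds''. The opening counting argument of Lemma~\ref{lemma_N-p'<a-b<N-q'} worked because in case~(1) one has $I=Q'$, which forces $(\nu_{<j}\cap\nu_j)\sqcup\nu_{>j}\supseteq Q$ and then equality by cardinality; dually, Lemma~\ref{lemma_N-q'<a-b<n-p'} used $I=P'$. In case~(4) neither identification holds, so neither containment is available, and I do not see how the inequality $\#(\nu_j\cap\nu_{<j})\leq q_j$ alone pins down $\alpha\leq\lambda_p-(N-1)/2$. (Concretely, from $\alpha>\lambda_p-(N-1)/2$ and the weakly-fair condition one only gets $\nu_{<j}\subseteq[\alpha,\infty)$, which does not by itself produce a cardinality contradiction.) So the bookkeeping difficulty you flag at the end is not cosmetic: the counting route for the outer bounds does not go through as written.

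The paper sidesteps this entirely: it handles all four inequalities by tableau comparison, settling $\beta$ first (two subcases $\beta<\lambda_{p+1}+(N-1)/2$ and $\beta>\lambda_{p+1}+(N-1)/2$) and then $\alpha$, with no preliminary counting. The ingredient you should make explicit is that the case-(4) hypothesis, via the contrapositive of Lemma~\ref{A_q_lt_wt_rep}(3), yields the \emph{strict} inequalities $q_j>\#(\nu_j\cap\nu_{<j})$ and $p_j>\#(\nu_j\cap\nu_{>j})$. These are exactly what make $m>0$ in Lemma~\ref{lemma_nonvanishing}, so the $(1,2)$-entry of $\Ann(\scrA_{\underline{d_0}}(\psi))$ equals $\beta$ whenever some $x\in\nu$ has $x>\beta$; matching this against the $(1,2)$-entry of $\Ann(\pi_\lambda)$ computed from Lemma~\ref{A_q_lt_wt_rep}(1), together with a box-count down the first column to a fixed entry in the residual subcases, disposes of both $\beta\neq\lambda_{p+1}+(N-1)/2$ and $\alpha\neq\lambda_p-(N-1)/2$ without any separate ``outer bound'' step.
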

    \begin{proof}
        The statement follows from the explicit calculation of the associated tableaux.
        Recall the tableau $\Ann(\pi_\lambda)$.
        Put $f = \#\{x \in \nu \mid x>\lambda_{p+1}+(N-1)/2\}$ and $m = \min\{f, f+N-(\lambda_p - \lambda_{p+1})-p\}$.
        When $f \neq 0$, the $(1,2)$-th entry in $\Ann(\pi_\lambda)$ is $\lambda_{p+1}+(N-1)/2$.
        Let $i_0$ be the maximal positive integer such that $\lambda_{f+i_0}+(p-q+1)/2 -(f+i_0) \geq \lambda_{p+1}+(N+1)/2-m-i_0$, if it exists.
        If there is no such $i_0$, set $i_0=0$.
        Then, the first column of $\Ann(\pi_\lambda)$ consists of 
        \begin{align*}
        &\underbrace{\lambda_{1}+\frac{p-q-1}{2}, \lambda_2+\frac{p-q-3}{2},\ldots, \lambda_{f}+\frac{p-q+1}{2}-f,}_f\\
        &\underbrace{\lambda_{f+1}+\frac{p-q+1}{2}-(f+1), \lambda_{f+2}+\frac{p-q+1}{2}-(f+2) \ldots, \lambda_{f+i_0}+\frac{p-q+1}{2}-(f+i_0),}_{i_0}\\
        &\lambda_{p+m+i_0+1}+\frac{N-1}{2}- (m+i_0), \lambda_{p+m+i_0+2}+\frac{N-1}{2}- (m+i_0+1), \ldots, \lambda_{p} - (N-1)/2, \ldots
        \end{align*}
        from top to bottom.
        The entry next to $\lambda_{p} - (N-1)/2$ is strictly less than $\lambda_{p}-(N-1)/2-1$ since $p-f$ is strictly greater than the number of elements in $\nu$ with multiplicity two.
        When $f = 0$, the $(1,2)$-th entry in $\Ann(\pi_\lambda)$ is the maximal member in $\nu$ with multiplicity two.
        This is greater than or equal to $\lambda_{p}-(N-1)/2+p'$.
        Then the first column consists of 
        \[
        \lambda_{p+1}+\frac{N-1}{2}, \lambda_{p+1}+\frac{N-3}{2},\ldots,\lambda_{p}-\frac{N-3}{2},\lambda_{p}-(N-1)/2,\ldots
        \]
        from top to bottom.
        
        We claim $\beta = \lambda_{p+1}+(N-1)/2$.
        Note that $q_j > \#(\nu'_{2} \sqcup \nu'_{4})$ if $A(\underline{d}', \nu'_1,\ldots,\nu'_5) \isom \pi_\lambda$ by Lemma \ref{A_q_lt_wt_rep} (3).
        Suppose first that $\beta < \lambda_{p+1}+(N-1)/2$.
        Then, there exists $x \in \nu'$ such that $x > \beta$.
        In this case, the $(1,2)$-th entry in $\Ann(A(\underline{d'}, \nu'_1,\ldots,\nu_5'))$ is $\beta$ by $q_j > \#(\nu'_{2} \sqcup \nu_{4}')$.
        If $A(\underline{d'}, \nu'_1,\ldots,\nu_5') \isom \pi_\lambda$, there exists no $x \in \nu$ with $x > \lambda_{p+1}+(N-1)/2$, i.e., $f=0$, and $\beta$ is the maximal member in $\nu$ with multiplicity two.
        Consider the number of boxes from the top to the box filled with $\lambda_{p+1}+(N-1)/2-q'$ in the first column.
        For $\pi_\lambda$, by $f=0$, this number is $q'$, but for $\scrA_{\underline{d_0}}(\psi)$, it is strictly less than $q'$ by $\beta < \lambda_{p+1}+(N-1)/2$.
        Hence, the representations $\pi_\lambda$ and $\scrA_{\underline{d_0}}(\psi)$ are different.
        Suppose next that $\beta > \lambda_{p+1}+(N-1)/2$.
        Then, $f>0$.
        If there exists $x \in \nu$ with $x > \beta$, the $(1,2)$-th entry in $\scrA_{\underline{d_0}}(\psi)$ is $\beta$.
        Then, one has $\beta = \lambda_{p+1}+(N-1)/2$.
        This is a contradiction.
        When there exists no $x \in \nu$ with $x > \beta$, we compare the $\nu$-antitableaux of $\pi_\lambda$ and $\scrA_{\underline{d_0}}(\psi)$.
        In this case, for $\pi_\lambda$, the number of boxes from the top to the box filled with $\lambda_{p+1}+(N+1)/2-q'$ in the first column is at most $\max\{-q+q'+(\lambda_p-\lambda_{p+1}), q'\}$, that is strictly less than $f+q'$ by $f \neq 0$.
        Note that $-q+q'+(\lambda_p-\lambda_{p+1})$ is equal to the integer $a$ such that $\lambda_a+(p-q+1)/2-a = \lambda_{p+1}+(N+1)/2-q'$.
        For $\scrA_{\underline{d_0}}(\psi)$, the number of boxes from the top to the box filled with $\lambda_{p+1}+(N+1)/2-q'$ in the first column is $f+q'$, since $\nu_j$ contains $I$.
        Hence, the tableaux are different.
        This concludes that $\beta$ is equal to $\lambda_{p+1}+(N-1)/2$ if $\pi_\lambda \isom \scrA_{\underline{d_0}}(\psi)$.
        In the following, we may assume $\beta = \lambda_{p+1}+(N-1)/2$.

        It remains to show $\alpha = \lambda_p - (N-1)/2$.
        If there exists no $x \in \nu$ with $x > \lambda_{p+1}+(N-1)/2$, the first column of $\Ann(\scrA_{\underline{d_0}}(\psi))$ consists of
        \[
        \lambda_{p+1}+(N-1)/2 = \beta,\ldots,\alpha,\ldots
        \]
        from top to bottom.
        Then, the entry next to $\alpha$ is strictly less than $\alpha-1$ by $q_j>\#(\nu_2' \sqcup \nu_4')$.
        For $\pi_\lambda$, in the first column, the entry next to $\lambda_{p}-(N-1)/2$ is strictly less than $\lambda_{p}-(N-1)/2-1$ and the entry next to $x$ with $\lambda_{p} - (N-1)/2 < x \leq \lambda_{p+1} + (N-1)/2$ is $x-1$.
        Hence, if $A(\underline{d}, \nu'_1,\ldots,\nu'_5) \isom \pi_\lambda$, we have $\alpha = \lambda_{p}-(N-1)/2$.
        We may assume that there exists $x \in \nu$ with $x > \lambda_{p+1}+(N-1)/2$, i.e., $f \neq 0$.
        Recall that by $\pi_\lambda \isom \scrA_{\underline{d_0}}(\psi)$, we have $p_j>\#(\nu_{4}')$.
        This shows that the entry next to $\alpha$ in the first column is strictly less than $\alpha-1$ if the box exists.
        By the description of $\Ann(\pi_\lambda)$ and the same discussion above, we have $\alpha = \lambda_{p}-(N-1)/2$.
        This completes the proof.
        \end{proof}

    For the converse, apply Lemma \ref{A_q_lt_wt_rep}(1), Corollary \ref{cor_transform} and Corollary \ref{cor_transform_2}.
    We then have $\pi_\lambda \isom \scrA_{\underline{d_0}}(\psi)$.
    This completes the proof.

\bibliographystyle{alpha}
\bibliography{ref}

\end{document}